\newtheorem{theorem}{Theorem}[section]
\newtheorem{lemma}[theorem]{Lemma}
\newtheorem{proposition}[theorem]{Proposition}
\newtheorem{corollary}[theorem]{Corollary}
\theoremstyle{definition}
\newtheorem{definition}[theorem]{Definition}
\newtheorem{construction}[theorem]{Construction}
\newtheorem{example}[theorem]{Example}
\newtheorem{warning}[theorem]{Warning}
\newtheorem{remark}[theorem]{Remark}
\newtheorem{question}[theorem]{Question}
\theoremstyle{definition}
\newtheorem{nul}{}[section]
\newtheorem{dfn}[nul]{Definition}
\newtheorem{rmk}[nul]{Remark}
\newtheorem*{obs}{Observation}
\newtheorem*{dfn*}{Definition}
\newtheorem*{axm*}{Axiom}
\newtheorem*{ntn*}{Notation}
\newtheorem*{exm*}{Example}
\newtheorem*{exr*}{Exercise}
\newtheorem*{int*}{Intuition}
\newtheorem*{qst*}{Question}
\newtheorem*{rmk*}{Remark}
\theoremstyle{plain}
\newtheorem{thm}{Theorem}
\newtheorem*{thm*}{Theorem}
\newtheorem*{prop*}{Proposition}
\newtheorem*{cor*}{Corollary}
\newtheorem*{lem*}{Lemma}
\newtheorem*{cnj*}{Conjecture}
\DeclareMathOperator{\smsh}{\wedge}
\begin{document}

\title{Eilenberg-Maclane spectra as equivariant Thom spectra}
\author{Jeremy Hahn and Dylan Wilson}

\begin{abstract} We prove that the $G$-equivariant mod $p$ Eilenberg--MacLane spectrum arises as an equivariant Thom spectrum for any finite, $p$-power cyclic group $G$, generalizing a result of Behrens and the second author in the case of the group $C_2$.  We also establish a construction of $\mathrm{H}\underline{\mathbb{Z}}_{(p)}$, and prove intermediate results that may be of independent interest. Highlights include constraints on the Hurewicz images of equivariant spectra that admit norms, and an analysis of the extent to which the non-equivariant $\mathrm{H}\mathbb{F}_p$ arises as the Thom spectrum of a more than double loop map.
\end{abstract}


\setcounter{tocdepth}{1}
\maketitle

\tableofcontents

\vbadness 5000


\section{Introduction}

Both authors are fond of the following result of Mahowald \cite{mahowald}:

\begin{theorem}[Mahowald] \label{thm:MahE2}
The Thom spectrum of the unique non-trivial double loop map
\begin{equation} \label{eqn:Mah}
\Omega^2 S^3 \longrightarrow \mathrm{BO}
\end{equation}
is $\mathrm{H}\mathbb{F}_2$.
\end{theorem}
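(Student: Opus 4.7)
The plan is to use the modern $\mathbb{E}_2$-recognition approach. The double loop space $\Omega^2 S^3 = \Omega^2 \Sigma^2 S^1$ is the free $\mathbb{E}_2$-algebra in pointed spaces on $S^1$, so giving an $\mathbb{E}_2$-map $\Omega^2 S^3 \to \mathrm{BO}$ is the same data as a class in $\pi_1 \mathrm{BO} \cong \mathbb{Z}/2$. The unique non-trivial choice corresponds to $1 - L$, with $L$ the Möbius line bundle, and the resulting Thom spectrum $M$ acquires an $\mathbb{E}_2$-ring structure. Unwinding the universal property identifies $M$ with the initial $\mathbb{E}_2$-ring $\mathbb{S}/\!/_{\mathbb{E}_2} 2$ equipped with a nullhomotopy of $2 \in \pi_0 \mathbb{S}$. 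Since $\mathrm{H}\mathbb{F}_2$ is $\mathbb{E}_\infty$ with $2 = 0$, this universal property produces a canonical $\mathbb{E}_2$-ring map $M \to \mathrm{H}\mathbb{F}_2$, and my goal is to show that this map is an equivalence.

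The heart of the proof is a mod-$2$ homology computation. The Thom isomorphism identifies $H_*(M; \mathbb{F}_2) \cong H_*(\Omega^2 S^3; \mathbb{F}_2)$ as rings, and the classical Kudo--Araki--Dyer--Lashof calculation presents this as the polynomial algebra $\mathbb{F}_2[x_1, x_3, x_7, \ldots]$ with $x_{2^i - 1} = Q^{2^{i-1}} \cdots Q^2 Q^1(x_1)$. On the other side, $H_*(\mathrm{H}\mathbb{F}_2; \mathbb{F}_2) = \mathbb{F}_2[\xi_1, \xi_2, \ldots]$ with $|\xi_i| = 2^i - 1$ is the dual Steenrod algebra, which has the same Poincaré series. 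The Thom diagonal forces $x_1 \mapsto \xi_1$, and the naturality of Dyer--Lashof operations under the $\mathbb{E}_2$-ring map, combined with Steinberger's calculation of these operations on the Milnor generators, propagates this to $x_{2^i - 1} \mapsto \xi_i$ modulo decomposables. Hence the induced map on $\mathbb{F}_2$-homology is an isomorphism.

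The passage from homology equivalence to equivalence is then automatic: $M$ is a connective ring spectrum with $2 = 0 \in \pi_0 M = \mathbb{F}_2$, so $\pi_* M$ is $2$-torsion and $M$ is $2$-complete. A mod-$2$ homology equivalence between connective $2$-complete spectra is an equivalence, so $M \simeq \mathrm{H}\mathbb{F}_2$.

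The main obstacle is the identification of the $\mathbb{F}_2$-homology map, specifically verifying that the iterated Dyer--Lashof operations $Q^{2^{i-1}} \cdots Q^1(x_1)$ in $H_*(M)$ hit the Milnor generators $\xi_i$ rather than landing in the decomposable ideal. This is where genuine input from the $\mathbb{E}_2$-structure on $M$, as opposed to merely its associative ring structure, is essential; a weaker structure on $M$ would not exhibit enough operations to generate the dual Steenrod algebra, which is precisely why the theorem is sharp about requiring a double loop map.
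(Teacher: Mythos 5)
This theorem is quoted by the paper as background, with its proof deferred to the literature, so there is no internal argument to compare against; your sketch is essentially the modern Hopkins-style proof that appears in two of the paper's own references (\cite{omar-toby}, \cite{akhil-niko-justin}) and that the paper then generalizes equivariantly. The skeleton is sound: freeness of $\Omega^2\Sigma^2 S^1$ as an $\mathbb{E}_2$-algebra on the connected space $S^1$ pins down the map via $\pi_1\mathrm{BO}\cong\mathbb{Z}/2$; the universal property of $\mathbb{S}/\!/_{\mathbb{E}_2}2$ (equivalently, the vanishing of the composite to $\mathrm{BGL}_1(\mathrm{H}\mathbb{F}_2)$, since $\pi_1\mathrm{BGL}_1(\mathrm{H}\mathbb{F}_2)=(\mathbb{F}_2)^{\times}$ is trivial) produces the $\mathbb{E}_2$-Thom class; and the homology comparison together with connectivity and $2$-completeness of both sides finishes the argument. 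Two points deserve tightening, neither fatal. First, your formula for the polynomial generators of $H_*(\Omega^2S^3;\mathbb{F}_2)$ is off: $Q^1x_1=x_1^2$ is already the square, so the generator in degree $2^i-1$ is $Q^{2^{i-1}}Q^{2^{i-2}}\cdots Q^2(x_1)$, each $Q^{2^j}$ being the top operation available on a degree-$(2^j-1)$ class in an $\mathbb{E}_2$-algebra; correspondingly, Steinberger's formula reads $Q^{2^j}\bar{\xi}_j=\bar{\xi}_{j+1}$ on the \emph{conjugate} Milnor generators, which still suffices because $\bar{\xi}_i\equiv\xi_i$ modulo decomposables. Second, you should justify why the Thom isomorphism $H_*(M)\cong H_*(\Omega^2S^3)$ intertwines Dyer--Lashof operations: the clean statement is that the $\mathbb{E}_2$-orientation yields an equivalence $\mathrm{H}\mathbb{F}_2\wedge M\simeq\mathrm{H}\mathbb{F}_2\wedge\Sigma^{\infty}_{+}\Omega^2S^3$ of $\mathbb{E}_2$-$\mathrm{H}\mathbb{F}_2$-algebras, after which naturality of the operations under the $\mathbb{E}_2$-map to $\mathrm{H}\mathbb{F}_2\wedge\mathrm{H}\mathbb{F}_2$ does the rest. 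With those repairs the argument is complete, and your closing remark about the sharpness of the $\mathbb{E}_2$-hypothesis is exactly the right moral.
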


This result and several variants have enjoyed many subsequent proofs in the literature.  Examples include \cite{mahowald-thom,cohen-may-taylor,priddy,temss,omar-toby,akhil-niko-justin}.  In joint work with Mark Behrens \cite{behrens-wilson}, the second author proved a $C_2$-equivariant generalization of Mahowald's Theorem \ref{thm:MahE2}:

\begin{theorem}[Behrens--Wilson] \label{thm:BehrensWilson}
Let $\rho$ denote the real regular representation of the group $C_2$.  Then there is a $\Omega^{\rho}$-map
$$\Omega^{\rho} S^{\rho+1} \to \mathrm{BO}_{C_2}$$
with Thom spectrum $\mathrm{H}\underline{\mathbb{F}}_2$, the Eilenberg--Maclane object associated to the constant Mackey functor $\underline{\mathbb{F}}_2$.
\end{theorem}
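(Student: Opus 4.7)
The approach has three stages: construct the $\Omega^\rho$-map using an equivariant recognition principle, compute the underlying and geometric fixed points of the resulting Thom spectrum, and then bootstrap to an equivariant equivalence with $\mathrm{H}\underline{\mathbb{F}}_2$.

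For the construction, I would invoke the $C_2$-equivariant recognition principle for $\rho$-fold loop spaces: when $X$ is a sufficiently connected pointed $C_2$-space, $\Omega^\rho \Sigma^\rho X$ is the free grouplike $\Omega^\rho$-algebra on $X$. Applying this to $X = S^1$ (with trivial action), the data of an $\Omega^\rho$-map $\Omega^\rho S^{\rho+1} \to \mathrm{BO}_{C_2}$ reduces to a pointed $C_2$-map $S^1 \to \mathrm{BO}_{C_2}$, i.e.\ a class in $\pi_1^{C_2}\mathrm{BO}_{C_2}$. The natural choice is the lift of the generator of $\pi_1 \mathrm{BO}\cong\Z/2$ that plays the role of the first Stiefel--Whitney class used in Mahowald's Theorem~\ref{thm:MahE2}.

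Write $M$ for the resulting Thom $C_2$-spectrum; $M$ automatically inherits an $E_\rho$-ring structure. Its underlying non-equivariant spectrum is, by construction, the Thom spectrum of $\Omega^2 S^3 \to \mathrm{BO}$, hence $\mathrm{H}\mathbb{F}_2$ by Theorem~\ref{thm:MahE2}. For the geometric fixed points, one would analyze the map $\Phi^{C_2}(\Omega^\rho S^{\rho+1}) \to \Phi^{C_2}\mathrm{BO}_{C_2}$ using an equivariant James-splitting or tom Dieck-style decomposition, together with the compatibility of Thomification with $\Phi^{C_2}$. The hope is to recognize $\Phi^{C_2} M$ once again as a Thom spectrum to which Mahowald's theorem applies, yielding $\Phi^{C_2} M \simeq \mathrm{H}\mathbb{F}_2$ together with an identification of the unit.

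To conclude, I would construct a map $M \to \mathrm{H}\underline{\mathbb{F}}_2$ of $C_2$-spectra and verify it is an equivalence. One route is to exhibit $M$ as an $\mathrm{H}\underline{\mathbb{F}}_2$-algebra: check that $2=0$ in $\pi_0^{C_2}M$ and compute $\pi_0 M \cong \underline{\mathbb{F}}_2$ as a Mackey functor; combined with a connectivity argument fed by the cell structure of $\Omega^\rho S^{\rho+1}$, this pins down $M \simeq \mathrm{H}\underline{\mathbb{F}}_2$. Equivalently, one can produce the map directly and check it is an equivalence on both underlying homotopy and on $\Phi^{C_2}$, a criterion that suffices for $C_2$-spectra.

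\textbf{Main obstacle.} I expect the chief difficulty to be the geometric-fixed-point step: controlling $\Phi^{C_2}$ of a Thom spectrum over an iterated equivariant loop space is subtle, since geometric fixed points do not commute on the nose with either Thomification or $\Omega^\rho$. A secondary subtlety is nailing down the full Mackey-functor structure of $\pi_0 M$, since the transfer and restriction maps are not determined solely by the underlying and geometric-fixed-point spectra, and one must argue these agree with those of $\underline{\mathbb{F}}_2$ using the $E_\rho$-ring structure on $M$.
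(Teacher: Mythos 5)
Your construction step is sound and matches the strategy of both the paper and \cite{behrens-wilson}: the recognition principle for $\Omega^{\rho}\Sigma^{\rho}S^1$ reduces the construction to a class in $\pi_1^{C_2}\mathrm{BO}_{C_2}$, and the criterion ``equivalence on underlying spectra and on $\Phi^{C_2}$'' is the right way to finish. (One small caveat: $\pi_1^{C_2}\mathrm{BO}_{C_2}\cong RO(C_2)/R(C_2)\cong\mathbb{F}_2\{1,\sigma\}$ has several nonzero classes, so ``the lift of the generator'' needs to be pinned down; the paper does this by identifying the relevant class with the M\"obius bundle on $S^1$.)

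The genuine gap is in your geometric-fixed-point step: you aim to show $\Phi^{C_2}M\simeq\mathrm{H}\mathbb{F}_2$, but $\Phi^{C_2}\mathrm{H}\underline{\mathbb{F}}_2$ is \emph{not} $\mathrm{H}\mathbb{F}_2$ — its homotopy is the polynomial algebra $\mathbb{F}_2[s]$ with $|s|=1$ (Lemma \ref{lem:em-geo-fixed}). So if you succeeded in identifying $\Phi^{C_2}M$ with $\mathrm{H}\mathbb{F}_2$, you would have \emph{disproved} the theorem. The correct computation starts from $\left(\Omega^{\rho}S^{\rho+1}\right)^{C_2}\simeq\Omega^2S^3\times\Omega S^2$ (the paper's Proposition in \S\ref{sec:action}, via $\Omega^{\rho}S^{\rho+1}\simeq\Omega^{\lambda+1}\mathbb{H}P^{\infty}$), which gives $\pi_*\Phi^{C_2}M\cong\mathbb{F}_2$ in each degree $*\ge 0$ \emph{additively} — an infinite answer that no application of Mahowald's theorem to a Thom spectrum over $\Omega^2S^3$ alone will produce. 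Matching this against $\mathbb{F}_2[s]$ then forces the two further inputs you have not accounted for: (a) a multiplicative structure on $\alpha^{\Phi C_2}$ (at $p=2$ supplied by delooping the classifying map once more, via $\Omega\mathbb{H}P^{\infty}\simeq S^{\lambda+1}$), so that surjectivity on homotopy can be checked on the algebra generator $s$; and (b) the fact that $s$ is hit, which is exactly the statement that the free orbit class $[C_2]\in A(C_2)=\pi_0^{C_2}S^0$ dies in $\underline{\pi}_0M$ — note $\underline{A}/(2)\ne\underline{\mathbb{F}}_2$, so ``$2=0$'' alone does not give $\underline{\pi}_0M=\underline{\mathbb{F}}_2$. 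The paper kills $[C_2]$ using the norm $\mathrm{N}^{C_2}M\to M$ coming from the $\mathbb{E}_{\rho}$-structure (\S\ref{sec:norm}); this is the missing idea in your outline, and your closing remark about transfers being undetermined is precisely where it is needed.
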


A less well-known fact is that Mahowald's map (\ref{eqn:Mah}) is in fact a triple loop map, as the authors first learned from Mike Hopkins:

\begin{obs} \label{obs:TripleLoop}
Mahowald's map (\ref{eqn:Mah}) may be obtained by thrice looping the composite
$$\mathbb{H}P^{\infty} \simeq \mathrm{BSp}(1) \longrightarrow \mathrm{BSp}
\simeq \mathrm{B}^5\mathrm{O} \stackrel{\eta}{\longrightarrow} \mathrm{B}^4\mathrm{O}.$$
\end{obs}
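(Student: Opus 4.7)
The strategy is to recognise the triple loop of the displayed composite as a non-trivial $\mathbb{E}_2$-map and then invoke the uniqueness of such maps out of $\Omega^2 S^3$. First, the loop counts line up: $\mathbb{HP}^\infty \simeq \mathrm{B} S^3$ gives $\Omega^3 \mathbb{HP}^\infty \simeq \Omega^2 S^3$, and $\Omega^3 \mathrm{B}^4\mathrm{O} \simeq \mathrm{BO}$. The resulting map $\Omega^2 S^3 \to \mathrm{BO}$ is automatically an $\mathbb{E}_3$-map, hence an $\mathbb{E}_2$-map.

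Second, since $\Omega^2 S^3$ is the free $\mathbb{E}_2$-algebra on $S^1$, the usual adjunction gives
\[
\mathrm{Map}_{\mathbb{E}_2}(\Omega^2 S^3, \mathrm{BO}) \simeq \mathrm{Map}_*(S^1, \mathrm{BO}),
\]
so the $\mathbb{E}_2$-homotopy classes of $\mathbb{E}_2$-maps $\Omega^2 S^3 \to \mathrm{BO}$ are indexed by $\pi_1(\mathrm{BO}) = \mathbb{Z}/2$. In particular, the unique non-trivial such $\mathbb{E}_2$-map is by definition Mahowald's map (\ref{eqn:Mah}).

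It thus suffices to show that our triple loop is non-trivial on $\pi_1$, or equivalently (by the loop-degree shift $\pi_n(\Omega^3 Y) = \pi_{n+3}(Y)$) that the original composite $\mathbb{HP}^\infty \to \mathrm{B}^4\mathrm{O}$ is non-trivial on $\pi_4$. The inclusion $\mathrm{BSp}(1) \hookrightarrow \mathrm{BSp}$ is an isomorphism on $\pi_4 = \mathbb{Z}$ (generated by $\mathbb{HP}^1 \hookrightarrow \mathbb{HP}^\infty$); the Bott equivalence $\mathrm{BSp} \simeq \mathrm{B}^5\mathrm{O}$ identifies the generator of $\pi_4(\mathrm{BSp})$ with $1 \in \pi_0(\mathrm{KO})$; and the displayed $\eta$-map $\mathrm{B}^5\mathrm{O} \to \mathrm{B}^4\mathrm{O}$ is multiplication by $\eta$ in the $\mathrm{ko}$-module sense, so on $\pi_0(\mathrm{KO}) \to \pi_1(\mathrm{KO})$ it is the mod-$2$ reduction $\mathbb{Z} \twoheadrightarrow \mathbb{Z}/2$. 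The composite is therefore surjective on $\pi_4$, hence non-trivial, and by the $\mathbb{E}_2$-classification it agrees with Mahowald's map.

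The one step requiring real care is the identification of the displayed $\eta$-map with $\mathrm{ko}$-module multiplication by $\eta \in \pi_1(\mathrm{ko})$; once this interpretation is in place, the whole observation collapses to the single fact that $\eta \neq 0 \in \pi_1(\mathrm{ko})$ together with the $\mathbb{E}_2$-adjunction above.
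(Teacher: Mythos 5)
Your proof is correct. Note that the paper states this Observation without proof (it is attributed to Hopkins and simply asserted again at the start of \S\ref{sec:prime2}), so there is no in-paper argument to compare against; your write-up supplies the standard justification one would give. The two halves are both sound: the identification of double loop maps $\Omega^2 S^3 \to \mathrm{BO}$ with $\pi_1(\mathrm{BO}) = \mathbb{Z}/2$ via the free $\mathbb{E}_2$-algebra adjunction (valid since $S^1$ is connected and $\mathrm{BO}$ is group-like), and the $\pi_4$ computation showing the composite is detected by $\eta \in \pi_1(\mathrm{ko})$ — here $\pi_4(\mathrm{BSp}(1)) \to \pi_4(\mathrm{BSp})$ is an isomorphism because $\mathrm{Sp}(2)/\mathrm{Sp}(1) = S^7$ is $6$-connected, and Bott periodicity $\mathrm{KSp} \simeq \Sigma^4 \mathrm{KO}$ carries the generator to a unit of $\pi_0(\mathrm{KO})$, so multiplication by $\eta$ is onto $\pi_1(\mathrm{KO})$. (The only cosmetic wrinkle, inherited from the paper's own phrasing rather than introduced by you, is that ``$\mathrm{B}^5\mathrm{O}$'' should really be read as the $3$-connected delooping $\Omega^{\infty}\tau_{\ge 4}\Sigma^4\mathrm{ko} \simeq \mathrm{BSp}$; after applying $\Omega^3$ this distinction disappears.)
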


The present work arose from the authors' attempt to understand how the above
observation
generalizes to the $C_2$-equivariant setting.  We obtain the following theorem:

\begin{thm} \label{thm:main-C2}
Let $\sigma$ denote the sign representation of the group $C_2$.  There is a $C_2$-action on the space $\mathbb{H}P^{\infty}$ and a $\Omega^{\rho+\sigma}$-map
$$\Omega^{\rho+\sigma} \mathbb{H}P^{\infty} \to \mathrm{BO}_{C_2},$$
with Thom spectrum $\mathrm{H}\underline{\mathbb{F}}_2$.
\end{thm}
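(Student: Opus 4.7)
My plan is to construct an equivariant lift of the composite in Observation~\ref{obs:TripleLoop} and then identify the Thom spectrum via comparison with Theorem~\ref{thm:BehrensWilson}. To begin, I would equip $\mathbb{H}P^\infty$ with the $C_2$-action induced by conjugation $q \mapsto iqi^{-1}$ on $\mathbb{H}$. This identifies $\mathbb{H}$ with the $C_2$-representation $2+2\sigma$, so that $\mathbb{H}P^1 \simeq S^{2+2\sigma} = S^{\rho+\sigma+1}$ and $(\mathbb{H}P^\infty)^{C_2} \simeq \mathbb{C}P^\infty$; the group $\mathrm{Sp}(1) \subset \mathbb{H}$ inherits a compatible $C_2$-structure, and $\mathbb{H}P^\infty$ identifies with $B\mathrm{Sp}(1)_{C_2}$.

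Next, I would construct the equivariant Hopkins composite
\[
\mathbb{H}P^\infty = B\mathrm{Sp}(1)_{C_2} \longrightarrow B\mathrm{Sp}_{C_2} \xrightarrow{\eta} B^{\rho+\sigma+1}\mathrm{O}_{C_2},
\]
where the second arrow uses an equivariant Bott-type identification of $B\mathrm{Sp}_{C_2}$ as a representation-delooping of $B\mathrm{O}_{C_2}$, combined with multiplication by an appropriate equivariant Hopf element in $\pi_\star^{C_2} \mathbf{ko}_{C_2}$. The shifts are arranged so that restriction to the trivial subgroup recovers the classical $\eta : B\mathrm{Sp} \to B^4\mathrm{O}$ from Observation~\ref{obs:TripleLoop}. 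Delooping $\rho+\sigma$ times then produces the required $\Omega^{\rho+\sigma}$-map $\alpha : \Omega^{\rho+\sigma}\mathbb{H}P^\infty \to \mathrm{BO}_{C_2}$, whose underlying non-equivariant map is Mahowald's.

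To identify $M\alpha$ with $\mathrm{H}\underline{\mathbb{F}}_2$, I would build a map $\mathrm{H}\underline{\mathbb{F}}_2 \to M\alpha$ using Theorem~\ref{thm:BehrensWilson}: either by producing a comparison of the Behrens-Wilson Thom data $\Omega^\rho S^{\rho+1} \to \mathrm{BO}_{C_2}$ with $\alpha$ (for instance, using $\Omega\mathbb{H}P^\infty = \mathrm{Sp}(1) = S(2+2\sigma)$ to exhibit a map of $E_\rho$-spaces over $\mathrm{BO}_{C_2}$ after dimension bookkeeping), or---more robustly---by appealing to a recognition theorem characterizing $\mathrm{H}\underline{\mathbb{F}}_2$ among $E_\rho$-ring $C_2$-spectra with $\pi_0$-Mackey functor $\underline{\mathbb{F}}_2$. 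One would then verify the comparison map is an equivalence via the Thom isomorphism $\mathrm{H}\underline{\mathbb{F}}_2 \wedge M\alpha \simeq \mathrm{H}\underline{\mathbb{F}}_2 \wedge (\Omega^{\rho+\sigma}\mathbb{H}P^\infty)_+$, combined with a bar spectral sequence matching the right-hand side, as a comodule over the equivariant dual Steenrod algebra, with $\mathrm{H}\underline{\mathbb{F}}_2 \wedge \mathrm{H}\underline{\mathbb{F}}_2$.

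The main obstacle is this final identification. Equivariantly, pinning down $\mathrm{H}\underline{\mathbb{F}}_2$ requires controlling the full $RO(C_2)$-graded Mackey-functor homotopy rather than just the underlying integer-graded $\pi_*$, and one must separately track geometric and underlying fixed-point data to preclude exotic Mackey-functor extensions. I therefore expect the proof to invoke---or first establish---a general recognition principle for $\mathrm{H}\underline{\mathbb{F}}_p$ among suitably structured equivariant ring spectra, plausibly one of the ``intermediate results of independent interest'' highlighted in the abstract, and this is where the bulk of the technical work will live.
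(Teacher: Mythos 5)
Your construction of the action is exactly the paper's: the conjugation action $q\mapsto iqi^{-1}$ is the restriction to $C_2\subseteq S^1$ of Construction \ref{cstr:quat-action}, giving $\mathbb{H}=2+2\sigma$ and $(\mathbb{H}P^{\infty})^{C_2}=\mathbb{C}P^{\infty}$. The construction of the map is also in the spirit of \S\ref{sec:prime2}, but the phrase ``equivariant Bott-type identification of $B\mathrm{Sp}_{C_2}$ as a representation-delooping of $\mathrm{BO}_{C_2}$'' hides the actual content, and as written it points at the wrong space. The paper explicitly warns that the relevant classifying space is $\mathrm{BGL}(\mathbb{H})$ --- classifying bundles of modules over $\mathbb{H}$ \emph{with its twisted $C_2$-action} --- and that this is \emph{not} $\mathrm{BSp}_{C_2}$, the space underlying equivariant symplectic $K$-theory. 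The identification $\Omega^{\infty}\Sigma^{\lambda+2}\mathrm{KO}_{C_2}\simeq\mathbb{Z}\times\mathrm{BGL}(\mathbb{H})$ is a theorem of Karoubi, proved via the equivariant isomorphism $C\ell(\lambda)\cong\mathbb{H}$ and Morita theory; this is where the work lives, and your sketch does not supply it. You would also need to identify the restriction of the looped-down composite to the bottom cell $S^1$ with the M\"obius bundle (equivalently with $1-2\in\pi_0^{C_2}(S^0)^{\times}$); the paper does this by computing the image in $\mathrm{KO}_{C_2}^{-1}\cong RO(C_2)/R(C_2)$. Without this step you cannot compare your map with anything whose Thom spectrum you already know.

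For the identification of the Thom spectrum your two proposed routes have different statuses. The first --- transporting the problem to $\Omega^{\rho}S^{\rho+1}$ via $\Omega^{\rho+\sigma}\mathbb{H}P^{\infty}\simeq\Omega^{\rho}S^{\rho+1}$ and citing Theorem \ref{thm:BehrensWilson} --- is a legitimate shortcut for the $C_2$ case (the paper's remark after Theorem \ref{thm:main-C2} is precisely about this equivalence), but it is \emph{not} what the paper does: the paper reproves everything for all $C_{2^n}$ by induction on the order of the group, taking geometric fixed points, with base case the nonequivariant Hopkins--Mahowald theorem; the $C_2$ statement then falls out of Theorem \ref{thm:MainThom} together with the \S\ref{sec:prime2} proposition identifying the delooped map. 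The second route is a genuine gap: there is no recognition principle characterizing $\mathrm{H}\underline{\mathbb{F}}_2$ among $\mathbb{E}_{\rho}$-ring $C_2$-spectra with $\underline{\pi}_0=\underline{\mathbb{F}}_2$, and the paper does not establish one. The ``intermediate result'' you are reaching for is the norm argument of \S\ref{sec:norm}, which only shows that the classes $[G/K]$ die in $\underline{\pi}_0$ of the Thom spectrum; the paper must then still compute the geometric fixed points of the Thom spectrum explicitly (via $(\Omega^{\lambda+1}\mathbb{H}P^{\infty})^{C_2}\simeq\Omega(\Omega S^3\times S^2)$ and a module structure over $\mathrm{N}^{C_2}(X)^{\Phi C_2}\simeq\mathrm{H}\mathbb{F}_2$) and check that the Thom class hits the generators. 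A homology isomorphism together with $\underline{\pi}_0=\underline{\mathbb{F}}_2$ does not by itself pin down the $G$-spectrum, which is why the bar-spectral-sequence coda of your proposal does not close the argument.
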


Since $C_2$ is the only group with a $2$-dimensional real regular representation, the statement of Theorem \ref{thm:BehrensWilson} makes it unclear how the constant $G$-Mackey functor $\mathrm{H}\underline{\mathbb{F}}_2$ might be an equivariant Thom spectrum for any larger group $G$.  On the other hand, the $C_2$-representation $\rho+\sigma = 2\sigma+1$ is naturally the restriction of a $C_4$-representation.  In this paper, we in fact obtain Theorem \ref{thm:main-C2} as a special case of the following more general result:

\begin{thm} \label{thm:main-even}
Fix an integer $n \ge 0$, and let $G=C_{2^n}$ denote the cyclic group of order $2^n$.  Let $\lambda$ denote the \textit{standard} representation of $G$ on the complex plane, where the generator acts by $e^{2 \pi i /2^n}$.  Then there is a $G$-action on $\mathbb{H}P^{\infty}$, and a $\Omega^{\lambda+1}$-map
$$\Omega^{\lambda+1} \mathbb{H}P^{\infty} \to \mathrm{BO}_{G},$$
with Thom spectrum $\mathrm{H}\underline{\mathbb{F}}_2$.
\end{thm}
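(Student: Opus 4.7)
The plan is to equivariantize Hopkins' non-equivariant factorization from Observation \ref{obs:TripleLoop}. My construction has three ingredients: a $G$-action on $\mathbb{H}P^{\infty}$, an equivariant Bott-type equivalence relating $\mathrm{BSp}_G$ to a delooping of $\mathrm{BO}_G$, and multiplication by an equivariant lift of the stable Hopf class $\eta$. For the action, I equip $\mathbb{H}^{\infty}$ with left multiplication by $\zeta = e^{2\pi i/2^n} \in \mathbb{C} \subset \mathbb{H}$, which commutes with the right $\mathbb{H}$-module structure and thus descends to $\mathbb{H}P^{\infty}$. As a real $G$-representation, $\mathbb{H} \cong \lambda \oplus \lambda$, and the tautological quaternionic line bundle naturally refines to a $G$-equivariant one, making $\mathbb{H}P^{\infty}$ an equivariant model for $\mathrm{BSp}_G(1)$.

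Given this, I produce the desired map as $\Omega^{\lambda+1}$ of the composite
\[ \mathbb{H}P^{\infty} \simeq \mathrm{BSp}_G(1) \longrightarrow \mathrm{BSp}_G \simeq \mathrm{B}^{\lambda+2}\mathrm{BO}_G \xrightarrow{\eta} \mathrm{B}^{\lambda+1}\mathrm{BO}_G. \]
The middle equivalence is an equivariant form of quaternionic Bott periodicity: the virtual $G$-representation $\lambda+2 \in \mathrm{RO}(G)$ has total dimension $4$, matching the non-equivariant equivalence $\Omega^{4}\mathrm{BSp} \simeq \mathrm{BO}$, and I expect the non-equivariant Bott class to refine equivariantly for $G = C_{2^n}$. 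The final arrow multiplies by an equivariant lift of $\eta \in \pi_1^G(\mathbb{S})$. Taking $(\lambda+1)$-fold loops produces the target $\Omega^{\lambda+1}\mathbb{H}P^{\infty} \to \mathrm{BO}_G$ as a map of $\mathbb{E}_{\lambda+1}$-algebras, and hence gives the Thom $G$-spectrum $M$ the structure of an $\mathbb{E}_{\lambda+1}$-algebra.

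It remains to identify $M$ with $\mathrm{H}\underline{\mathbb{F}}_2$. Forgetting equivariance, the composite above is precisely Hopkins' triple-loop factorization of Mahowald's map, so Theorem \ref{thm:MahE2} yields $M^e \simeq \mathrm{H}\mathbb{F}_2$ on underlying spectra. Upgrading this to a genuine equivalence is the main obstacle of the proof: for $n > 1$ the Mackey functor $\underline{\pi}_\star \mathrm{H}\underline{\mathbb{F}}_2$ is intricate enough that neither a direct Mackey-functor comparison nor a naive inductive geometric-fixed-point check is straightforward. I expect the argument to rest on the paper's advertised constraints on Hurewicz images of equivariant ring spectra admitting norms: the construction endows $M$ not only with its $\mathbb{E}_{\lambda+1}$-algebra structure but also with compatible norms (inherited from the geometric nature of the Thom construction over the subgroup lattice of $G$), and together these should force $M$ to coincide with the minimal such equivariant ring spectrum, namely $\mathrm{H}\underline{\mathbb{F}}_2$.
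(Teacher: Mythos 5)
Your high-level strategy---equivariantizing Hopkins' factorization through $\mathbb{H}P^{\infty}$---is indeed the paper's, but three of your steps contain genuine errors or gaps. First, the action. Left multiplication by $\zeta$ is the wrong choice: the central element $-1=\zeta^{2^{n-1}}$ acts trivially on projective space, so your $C_{2^n}$-action factors through $C_{2^{n-1}}$ (for $n=1$ it is trivial), and on the affine chart of $\mathbb{H}P^1$ the induced action is conjugation by $\zeta$, so that $\mathbb{H}P^1$ is the representation sphere of $2$ plus the character $\zeta^2$, not $S^{\lambda+2}$. Consequently $\Omega^{\lambda+1}\mathbb{H}P^{\infty}$ is not $\Omega^{\lambda}S^{\lambda+1}$ and the fixed-point data come out wrong. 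The correct action (Construction \ref{cstr:quat-action}) is conjugation by a square root, $q\mapsto \sqrt{z}\,q\,\sqrt{z}^{-1}$, i.e.\ $u+vj\mapsto u+(zv)j$, under which $\mathbb{H}\cong 2+\lambda$ (not $\lambda\oplus\lambda$) and $\Omega\mathbb{H}P^{\infty}\simeq S^{\lambda+1}$. Second, the Bott step. With the correct action the tautological bundle is a module over $\mathbb{H}$ carrying a nontrivial $G$-action, and the paper explicitly warns that the resulting classifying space $\mathrm{BGL}(\mathbb{H})$ is \emph{not} $\mathrm{BSp}_G$. The equivalence needed is not ``$\mathrm{BSp}_G\simeq\mathrm{B}^{\lambda+2}\mathrm{BO}_G$ because the virtual dimensions match''; it is Karoubi's theorem $\Omega^{\infty}\Sigma^{\lambda+2}\mathrm{KO}_G\simeq\mathbb{Z}\times\mathrm{BGL}(\mathbb{H})$, which rests on the equivariant Clifford-algebra isomorphisms $C\ell(\lambda)\cong\mathbb{H}$ and $C\ell(2+\lambda)\cong M_2(\mathbb{H})$ for precisely this twisted action. ``I expect the Bott class to refine equivariantly'' is exactly where the work lives, and for your (untwisted, non-faithful) action the relevant periodicity would involve the trivial representation $4$, not $\lambda+2$.

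Third, and most seriously, the identification of the Thom spectrum cannot be settled by an appeal to ``minimality''; no uniqueness statement of that kind is available. What the paper does is identify the $(\lambda+1)$-fold looping of the composite with the map $\mu$ classified by $-1=1-p\in\pi_0(S^0_{(2)})^{\times}$ (using $\mathrm{KO}_G^{-1}\cong RO(G)/R(G)$ and the M\"obius bundle), thereby reducing Theorem \ref{thm:main-even} to Theorem \ref{thm:MainThom}. That theorem is then proved by induction on $|G|$ via isotropy separation: one computes $\pi_*$ of the geometric fixed points of the Thom spectrum (using that it is a module over $(\mathrm{N}^GX)^{\Phi G}\simeq\mathrm{H}\mathbb{F}_2$ together with the Thom isomorphism and the splitting $(\Omega^{\lambda}S^{\lambda+1})^G\simeq\Omega^2S^3\times\Omega S^2$), uses the multiplicative structure to make the Thom class a ring map on geometric fixed points, proves $\underline{\pi}_0=\underline{\mathbb{F}}_2$ by the norm argument killing the classes $[G/K]$ in the Burnside ring, and checks that the algebra generators of $\pi_*\mathrm{H}\underline{\mathbb{F}}_2^{\Phi G}$ are hit. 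None of this is ``forced'' by the existence of norms alone; it constitutes the bulk of the paper.
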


\begin{remark}
The group $G=C_2$ is special: only for this group does $\lambda=2\sigma$ split as the sum of two smaller representations.  This leads to a non-obvious equivalence of $C_2$-spaces
$$\Omega^{\rho} S^{\rho+1} = \Omega^{\sigma+1} S^{\sigma+2} \simeq \Omega^{2\sigma+1} \mathbb{H}P^{\infty} \simeq \Omega^{2 \sigma} S^{2 \sigma+1} = \Omega^{\lambda} S^{\lambda+1},$$
which implies a $\Omega^{\lambda}$-analog of Theorem \ref{thm:BehrensWilson} (for details, see Section \ref{sec:action}).
A Borel equivariant version of this $\Omega^{\lambda}$-analog previously appeared as Lemma $3.1$ in \cite{klang}, where it is used in an essential manner to compute the factorization homology of Eilenberg--Maclane spectra.
\end{remark}

We next turn to odd primes $p>2$.  So long as one is willing to contemplate Thom spectra of $p$-local spherical fibrations (as in, e.g., \cite[\S 3.4]{bcs}), Mahowald's Theorem \ref{thm:MahE2} admits an analog due to Mike Hopkins \cite[Theorem 4.18]{akhil-niko-justin}:

\begin{theorem}[Hopkins] \label{thm:Hop}
Let 
	\[
	S^3 \stackrel{1-p}{\longrightarrow} \mathrm{B}^3\mathrm{GL}_1(S^0_{(p)})
	\]
denote the class $1-p \in \pi_0(S_{(p)})^{\times} = 
\pi_3(\mathrm{B}^3\mathrm{GL}_1(S_{(p)})).$  Then, applying $\Omega^2$, one obtains a map
	\[
	\Omega^2 S^3 \longrightarrow \mathrm{BGL}_1(S^0_{(p)})
	\]
with Thom spectrum equivalent to $\mathrm{H}\mathbb{F}_p$.
\end{theorem}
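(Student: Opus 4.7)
My plan is to apply the universal property of $E_2$-Thom ring spectra, following the template of Antol\'in-Camarena--Barthel in \cite{akhil-niko-justin}. Since the displayed map $\Omega^2 S^3 \to \mathrm{BGL}_1(S^0_{(p)})$ is a double loop map, its Thom spectrum $X$ is canonically an $E_2$-algebra over $S^0_{(p)}$, and for any such $E_2$-algebra $A$ the universal property asserts an equivalence between $\Map_{E_2\text{-}\Alg_{S^0_{(p)}}}(X,A)$ and the space of $E_2$-nullhomotopies of the composite $\Omega^2 S^3 \to \mathrm{BGL}_1(S^0_{(p)}) \to \mathrm{BGL}_1(A)$. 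Specializing to $A = \mathrm{H}\mathbb{F}_p$, this composite arises as $\Omega^2$ of the map $S^3 \to \mathrm{B}^3\mathrm{GL}_1(\mathrm{H}\mathbb{F}_p)$ classifying the image of $1-p$ in $\pi_0(\mathrm{H}\mathbb{F}_p)^\times = \mathbb{F}_p^\times$; since $1-p \equiv 1 \pmod p$, this class is trivial, producing a canonical $E_2$-algebra map $\varphi \colon X \to \mathrm{H}\mathbb{F}_p$.

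To see that $\varphi$ is an equivalence, I would first observe that $X$ is connective and $p$-complete: the bottom cell $S^1 \hookrightarrow \Omega^2 S^3$ has Thom spectrum $\mathrm{cofib}(p \colon S^0_{(p)} \to S^0_{(p)}) = S^0_{(p)}/p$ (computed from $1 - (1-p) = p$), whose image in $X$ witnesses $p = 0 \in \pi_0 X$, so that multiplication by $p$ is null on $X$. Hence $\varphi$ is a map between connective $p$-complete spectra, and it suffices to check that $\varphi$ induces an equivalence after tensoring with $\mathrm{H}\mathbb{F}_p$. The Thom isomorphism (enabled by $\varphi$) identifies
$$\pi_*(X \otimes \mathrm{H}\mathbb{F}_p) \cong H_*(\Omega^2 S^3;\mathbb{F}_p),$$
and F.\ Cohen's computation identifies the right-hand side with the dual Steenrod algebra $\mathcal{A}_p^\vee = \pi_*(\mathrm{H}\mathbb{F}_p \otimes \mathrm{H}\mathbb{F}_p)$. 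The induced map $\varphi_*$ is an isomorphism in degree $0$ and intertwines the $E_2$-power operations; since these operations generate $\mathcal{A}_p^\vee$ from the unit class, $\varphi_*$ is forced to be surjective, and the matching dimensions in each degree then force it to be an iso.

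The principal obstacle is the last step, namely rigorously verifying surjectivity of $\varphi_*$ on mod-$p$ homology via the $E_2$-operations. The cleanest alternative is to argue that $\mathrm{H}\mathbb{F}_p$ itself satisfies the same $E_2$-universal property as $X$ --- that it is the initial $E_2$-algebra over $S^0_{(p)}$ equipped with a double-loop trivialization of the class $1-p \in \mathrm{GL}_1$ --- which would immediately yield $\varphi$ as an equivalence. The remaining ingredients, namely the tautological trivialization of $1-p$ in $\pi_0(\mathrm{H}\mathbb{F}_p)^\times$ and the Moore-spectrum bottom-cell analysis establishing connectivity and $p$-completeness of $X$, are comparatively routine.
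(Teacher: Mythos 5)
The paper does not actually prove this theorem---it cites \cite[Theorem 4.18]{akhil-niko-justin} and uses the result as the base case of its induction---and your argument is precisely the standard proof from that reference and from \cite{omar-toby}: produce the Thom class from the $\mathbb{E}_2$-universal property using $1-p\equiv 1$ in $\mathbb{F}_p^{\times}$, reduce to an $\mathrm{H}\mathbb{F}_p$-homology isomorphism between connective $p$-complete spectra, and deduce surjectivity from the Dyer--Lashof action on the dual Steenrod algebra (Steinberger's computation, which supplies the missing rigor you flag). The one slip is that $\mathcal{A}_p^{\vee}$ is generated under the $\mathbb{E}_2$-operations not from the unit class in degree $0$ but from the degree-$1$ class $\tau_0$ arising from your Moore-spectrum bottom cell, which is exactly where the nullhomotopy of $p$ enters.
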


In light of the situation at $p=2$, it is natural to wonder if the map
$$1-p:S^3 \longrightarrow \mathrm{B}^3 \mathrm{GL}_1(S^0_{(p)})$$
may be delooped.  The authors were surprised to find that it \textit{cannot}, which we record as our only non-equivariant result:

\begin{thm} \label{thm:non-eqvt}
Let $S^0_{(p)}$ denote the $p$-local sphere spectrum, and suppose $p>2$.  Then there is no triple loop map
$$X \to \mathrm{BGL}_1(S^0_{(p)}),$$
for any triple loop space $X$, with Thom spectrum $\mathrm{H}\mathbb{F}_p$.  The same is true if $S^0_{(p)}$ is replaced by the $p$-completed sphere spectrum.
\end{thm}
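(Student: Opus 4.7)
My plan is to proceed by contradiction: I assume $X$ is a triple loop space and $f \colon X \to \mathrm{BGL}_1(S^0_{(p)})$ is an $\E_3$-map with Thom spectrum $\mathrm{H}\mathbb{F}_p$, then derive a contradiction via obstruction theory. The Thom isomorphism $\mathrm{H}\mathbb{F}_p \otimes_{S^0_{(p)}} Mf \simeq \mathrm{H}\mathbb{F}_p \otimes \Sigma^\infty_+ X$, combined with $Mf \simeq \mathrm{H}\mathbb{F}_p$, gives a canonical $\mathbb{F}_p$-algebra isomorphism $H_*(X;\mathbb{F}_p) \cong \mathcal{A}_*$ with the dual Steenrod algebra. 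Invoking the universal property of Thom spectra from \cite{omar-toby} together with the uniqueness of the $\E_2$-Thom realization of $\mathrm{H}\mathbb{F}_p$ implicit in Theorem~\ref{thm:Hop}, I would identify $X$ as an $\E_2$-space with $\Omega^2 S^3$. Since $S^3 \simeq \Omega \mathbb{H}P^\infty$ has an essentially unique delooping, the $\E_3$-structure on $X$ forces $X \simeq \Omega^3 \mathbb{H}P^\infty$, and $f$ deloops to a map $g \colon \mathbb{H}P^\infty \to B^4 \mathrm{GL}_1(S^0_{(p)})$ restricting on the bottom cell $S^4$ to the Hopkins class $1-p \in \pi_4 B^4 \mathrm{GL}_1(S^0_{(p)}) = \pi_0 \mathrm{GL}_1(S^0_{(p)})$.

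Next I would show no such extension $g$ exists by cell-wise obstruction theory on $\mathbb{H}P^\infty = \colim \mathbb{H}P^n$. The attaching map of the $8$-cell is the Hopf map $\nu \colon S^7 \to S^4$, and the first obstruction to extending $g$ is the composite
\[ S^7 \xrightarrow{\nu} S^4 \xrightarrow{1-p} B^4 \mathrm{GL}_1(S^0_{(p)}), \]
classified by $(1-p)\nu \in \pi_3(S^0_{(p)})$. At $p = 3$, $\pi_3(S^0_{(3)}) \cong \mathbb{Z}/3$ is generated by $\nu$, and $(1-3)\nu = -2\nu \neq 0$, providing the required contradiction. For $p \geq 5$ we have $\pi_3(S^0_{(p)}) = 0$, so I would need to analyze higher-cell obstructions; the infinite multiplicative order of $1-p$ in $\mathbb{Z}_{(p)}^\times$ (for $p$ odd) should guarantee that at some higher cell of $\mathbb{H}P^\infty$, the obstruction class is nonzero in $\pi_*(S^0_{(p)})$. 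A conceptually cleaner formulation would identify $\mathbb{H}P^\infty = \mathrm{BSp}(1)$ and use the action of Adams operations on the $K$-theoretic portion of the mapping spectrum to detect the obstruction uniformly across primes.

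The main obstacle is the rigidification in the first paragraph---establishing that $X \simeq \Omega^3 \mathbb{H}P^\infty$ as an $\E_3$-space. Although the $\E_2$-structure on $X$ should be pinned down by the universal property of Thom spectra in \cite{omar-toby} combined with uniqueness of Hopkins' realization, the uniqueness of the $\E_3$-refinement is more delicate, and ruling out all other triple-loop-space $X$ with $H_*(X;\mathbb{F}_p) \cong \mathcal{A}_*$ requires care. An alternative, more robust route would sidestep this rigidification entirely by computing the $\E_3$-topological Andr\'e--Quillen homology $L_{\E_3}(\mathrm{H}\mathbb{F}_p / S^0_{(p)})$ in two ways: for any $\E_3$-Thom presentation $Mf \simeq \mathrm{H}\mathbb{F}_p$, this cotangent complex takes the form $\mathrm{H}\mathbb{F}_p \otimes \Sigma^\infty X$ for some space $X$, while an intrinsic computation of $L_{\E_3}(\mathrm{H}\mathbb{F}_p / S^0_{(p)})$ at odd primes should rule out the existence of such an $X$.
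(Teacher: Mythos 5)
Your overall architecture --- assume the map exists, rigidify $X$ to $\Omega^3\mathbb{H}P^{\infty}$, then obstruct the extension of $1-p$ over $\mathbb{H}P^{\infty}$ --- is the same as the paper's, but both of the steps that carry the actual content are left open. For the rigidification, you correctly identify the problem but do not solve it: the paper's resolution is the theorem of Dwyer--Miller--Wilkerson, which gives \emph{uniqueness} of the $\mathbb{E}_3$-refinement of the standard $\mathbb{E}_2$-structure on $(\Omega^2S^3)^{\wedge}_p$, so that $\mathrm{B}^3X$ is forced to be $\mathbb{H}P^{\infty}_p$. There is also a completion issue you do not address: the identification only produces a map out of the $p$-\emph{completion} of $\mathbb{H}P^{\infty}$, and one must factor through the fiber of $\mathrm{BGL}_1(S^0_p)\to\mathrm{BGL}_1(\mathrm{H}\mathbb{F}_p)$, whose homotopy groups are $p$-complete, to get back to a map out of $\mathbb{H}P^{\infty}$ itself (and a separate argument, via $\mathrm{H}\mathbb{Z}/p^2$, to see that the bottom-cell class $1+p\alpha$ has $\alpha$ a $p$-adic unit).

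The obstruction-theoretic second half has a more serious gap. Your first obstruction at $p=3$ is the composite $S^7\xrightarrow{\nu}S^4\xrightarrow{1-p}\mathrm{B}^4\mathrm{GL}_1(S^0_{(3)})$, i.e.\ the image of the unit $1-p$ under the precomposition homomorphism $\nu^*\colon\pi_0\mathrm{gl}_1(S^0_{(3)})=(\mathbb{Z}_{(3)})^{\times}\to\pi_3\mathrm{gl}_1(S^0_{(3)})$. Writing this as ``$(1-p)\nu=-2\nu$'' conflates the multiplicative group $\pi_0\mathrm{gl}_1$ with the additive $\pi_0 S^0$; the action of $\pi_*S^0$ on $\pi_0\mathrm{gl}_1$ is not multiplication by the unit, and computing $\nu^*(1-p)$ is precisely the hard part (it is governed by a logarithm). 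Worse, for $p\ge 5$ you have no computation at all, and cell-by-cell obstruction theory suffers from indeterminacy: each obstruction depends on the choice of extension over the previous skeleton, so ``the infinite multiplicative order of $1-p$'' cannot by itself force some obstruction to be nonzero. The paper's device for making this well-posed is exactly the ``cleaner formulation'' you gesture at but do not execute: compose with Rezk's $K(1)$-local logarithm $\ell\colon\Sigma^{\infty}\mathrm{B}^4\mathrm{GL}_1(S^0_p)\to\Sigma^4L_{K(1)}S^0$, use $\ell(1+p\alpha)=\log(1+p\alpha)-\tfrac1p\log(1+p\alpha)$ to see the target class is a unit, and then reduce the lifting problem to finding a $\psi^2$-invariant class in $\mathrm{KU}_p^4(\mathbb{H}P^{\infty})\cong\beta^{-2}\mathbb{Z}_p\llbracket e\rrbracket$ restricting to the unit on $S^4$. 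This becomes the functional equation $4P(e)=P(e^2+4e)$ for $P(e)=e+c_2e^2+\cdots$, whose forced coefficient $c_i=\tfrac{2}{(2i)!}\prod_{j=2}^{i}(-(j-1)^2)$ fails to be a $p$-adic integer at $i=\tfrac{p+1}{2}$; this is the actual proof, uniform in odd $p$, and it is the step your proposal is missing. (Your TAQ alternative is plausible in outline but is likewise not carried out.)
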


\begin{remark}
At the prime $p=3$, the map 
$$1-p:S^3 \longrightarrow \mathrm{B}^3 \mathrm{GL}_1(S^0_{(p)})$$
is not even an $H$-space map for the standard $H$-space structure on $S^3=\mathrm{SU}(2)$.  Crucially, we will see in Section \ref{sec:exotic} that the map \emph{is} an $H$-space map for a certain exotic $H$-space structure on $S^3$.  Determining the maximum amount of structure present on this map remains an interesting question.
\end{remark}

Our equivariant generalization of Theorem \ref{thm:Hop} is as follows:

\begin{thm}[Theorem \ref{thm:main}] \label{thm:MainThom}
Fix an integer $n \ge 0$ and a prime number $p$, and let $G$ denote the cyclic group $C_{p^n}$.  Let $\lambda$ denote the standard representation of $G$ on the complex numbers, where a generator acts by $e^{2\pi i / p^n}$, and let $S^0_{(p)}$ denote the $G$-equivariant sphere spectrum.  Finally, let
	\[
	\mu: \Omega^{\lambda}S^{\lambda+1}
	\longrightarrow \mathrm{BGL}_1(S^0_{(p)})
	\]
denote the $\Omega^{\lambda}$-map obtained by applying $\Omega^{\lambda}$ to the map
	\[
	S^{\lambda+1} \longrightarrow \mathrm{B^{\lambda+1}GL}_1(S^0_{(p)})
	\]
corresponding to $1-p \in (\pi_0^{G}S^0_{(p)})^{\times}$.
Then the Thom spectrum $\left(\Omega^{\lambda}S^{\lambda+1}\right)^\mu$
of $\mu$ is $\mathrm{H}\underline{\mathbb{F}}_p$.
\end{thm}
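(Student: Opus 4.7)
The strategy is to recognize $T=(\Omega^{\lambda}S^{\lambda+1})^\mu$ through its universal property as an $\mathrm{E}_{\lambda}$-algebra, build a comparison map to $\mathrm{H}\underline{\mathbb{F}}_p$, and verify that it is an equivalence after restriction and on geometric fixed points.

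Since $\lambda$ is a faithful $G$-representation, the equivariant recognition principle identifies $\Omega^{\lambda}S^{\lambda+1}=\Omega^{\lambda}\Sigma^{\lambda}S^1$ with the free $\mathrm{E}_{\lambda}$-algebra in pointed $G$-spaces on the trivially-acted $S^1$. Giving an $\mathrm{E}_{\lambda}$-map to $\mathrm{BGL}_1(S^0_{(p)})$ is thus the same data as a pointed $G$-map $S^1\to \mathrm{BGL}_1(S^0_{(p)})$, equivalently an element of $(\pi_0^{G}S^0_{(p)})^{\times}$. Since $\mu$ corresponds to $1-p$, the Thom spectrum $T$ is characterized as the universal $\mathrm{E}_{\lambda}$-$S^0_{(p)}$-algebra in $G$-spectra equipped with a nullhomotopy of $p\in\pi_0^{G}S^0_{(p)}$.

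Because $\mathrm{H}\underline{\mathbb{F}}_p$ is an $\mathrm{E}_{\infty}$-$S^0_{(p)}$-algebra in which $p=0$, this universal property produces a comparison map $\phi:T\to \mathrm{H}\underline{\mathbb{F}}_p$ of $\mathrm{E}_{\lambda}$-$S^0_{(p)}$-algebras. Restricting to the trivial subgroup, $\mu$ becomes the $\Omega^2$-map $\Omega^2 S^3\to \mathrm{BGL}_1(S^0_{(p)})$ classified by $1-p$ (since $|\lambda|=2$), so by Hopkins's Theorem \ref{thm:Hop} the underlying spectrum of $T$ is $\mathrm{H}\mathbb{F}_p$; the underlying of $\phi$ is then a nonzero self-map of $\mathrm{H}\mathbb{F}_p$ and hence an equivalence.

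The main obstacle is then to show $\Phi^{H}\phi$ is an equivalence for every nontrivial $H\le G$. Because $\lambda^{H}=0$, the $H$-fixed space is the pointed mapping space $\mathrm{Map}_{*}^{H}(S^{\lambda},S^{\lambda+1})$, which is not a classical iterated loop space, and $\Phi^{H}T$ cannot be read off as the Thom spectrum of an ordinary non-equivariant map. I would proceed by induction on $n$: restricting to $C_{p^{n-1}}$ gives a Thom spectrum already identified with $\mathrm{H}\underline{\mathbb{F}}_p$ by the inductive hypothesis, and I would use the ``partial norm'' structure internal to an $\mathrm{E}_{\lambda}$-algebra (arising because $G$-orbits fit inside single $\lambda$-disks) to relate $\Phi^{C_{p^n}}T$ to the Euler class $a_{\lambda}$. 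The abstract's mention of ``constraints on Hurewicz images of equivariant spectra that admit norms'' strongly suggests that a rigidity result of exactly this flavor is developed and then deployed here, forcing $\Phi^{C_{p^n}}T$ to match $\Phi^{C_{p^n}}\mathrm{H}\underline{\mathbb{F}}_p$ — a polynomial ring on $a_{\lambda}$ — and so completing the identification $\phi\simeq \mathrm{id}$.
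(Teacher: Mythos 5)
Your setup matches the paper's: the comparison map $\phi$ (the paper's Thom class $\alpha$), the induction on $n$ with Hopkins--Mahowald as the base case, and the reduction to geometric fixed points are all exactly the paper's outline. But the entire content of the proof lives in the step you defer to ``a rigidity result \dots developed and then deployed here,'' and that step is not one result but three separate, substantial arguments, none of which your proposal supplies. First, one must compute $\pi_*\bigl(T^{\Phi G}\bigr)$ additively; the paper does this by identifying $\bigl(\Omega^{\lambda}S^{\lambda+1}\bigr)^{G}\simeq \Omega^2S^3\times\Omega S^2$ via the fibration coming from $\Omega\mathbb{H}P^{\infty}\simeq S^{\lambda+1}$, then using the norm map $\mathrm{N}^G X\to X$ to make $X^{\Phi G}$ an $\mathrm{H}\mathbb{F}_p$-module so that a Thom isomorphism on geometric fixed points determines its homotopy. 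Second --- and this is the subtlest point, invisible in your plan --- since $\lambda^{G}=0$, geometric fixed points destroy the $\mathbb{E}_{\lambda}$-multiplication, so $\phi^{\Phi G}$ has no a priori ring structure with which to leverage ``hits the generators, hence surjective, hence iso.'' At $p=2$ the paper deloops $\mu$ once more via $\mathbb{H}P^{\infty}\to \mathrm{B}^{\lambda+1}\mathrm{BO}_G$ (equivariant Karoubi $K$-theory); at odd $p$ it must construct an \emph{exotic} $\mathbb{A}_2$-structure on $S^{\lambda+1}_{(p)}$ using the equivariant EHP sequence and a Whitehead-product computation ($[\iota_{\lambda+2},\iota_{\lambda+2}]\equiv 2\nu$ mod suspensions). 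Your proposal never addresses how $\Phi^{G}\phi$ acquires any multiplicativity.

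Third, the role of norms in the paper is not to produce ``rigidity'' tying $\Phi^{G}T$ to $a_{\lambda}$; it is to prove $\underline{\pi}_0T=\underline{\mathbb{F}}_p$. Killing $p$ alone only gives $\underline{A}/(p)$, and the classes $[G/K]$ survive; the paper kills them via the congruence $\mathrm{N}^G(p)\equiv[G/C_{p^{n-1}}]$ modulo $(p,[G/K])$, and this computation of $\underline{\pi}_0$ is then what feeds the diagram chases showing the degree-$1$ and degree-$2$ generators of $\pi_*\mathrm{H}\underline{\mathbb{F}}_p^{\Phi G}$ are hit. Note also that at odd primes the target is $\mathbb{F}_p[t]\otimes\Lambda(s)$ with $|t|=2$, $|s|=1$, not a polynomial ring on a single Euler class: the exterior class $s$ is precisely the witness to the vanishing of $[G]$ in the Hurewicz image, and it requires its own argument. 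As written, your proposal is a correct table of contents for the proof with its three hardest chapters missing.
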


\begin{remark}
We will include a detailed discussion of Thom spectra of $G$-equivariant, $p$-local spherical fibrations in Section \S \ref{sec:prime2}.  This will include in particular a discussion of $\mathrm{BGL}_1(S^0_{(p)})$ as a $G$-infinite loop space, allowing us to form its $\lambda$-delooping $\mathrm{B^{\lambda+1}GL}_1(S^0_{(p)})$. The basic definitions are due to Lewis and May, and found in \cite[Chapter X]{LMS}.
\end{remark}

We also establish a ($p$-local) integral variant of Theorem \ref{thm:MainThom}:

\begin{thm}[Theorem \ref{thm:Z}] \label{thm:introZ} Let $S^{\lambda+1}\langle \lambda+1\rangle$
denote the fiber of the unit
	\[
	S^{\lambda+1} \longrightarrow
	\Omega^{\infty}\left(\Sigma^{\lambda+1}
	\mathrm{H}\underline{\mathbb{Z}}\right).
	\]
Then there is an equivalence
	\[
	\left(\Omega^{\lambda}
	(S^{\lambda+1}\langle \lambda+1\rangle)\right)^{\mu}
	\simeq \mathrm{H}\underline{\mathbb{Z}}_{(p)}.
	\]
\end{thm}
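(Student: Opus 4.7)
The plan is to deduce Theorem \ref{thm:introZ} from Theorem \ref{thm:MainThom} by realizing $\Omega^{\lambda}(S^{\lambda+1}\langle \lambda+1\rangle)$ as the fiber of a principal $\mathbb{Z}$-bundle with total space $\Omega^{\lambda}S^{\lambda+1}$, and then applying the Thom-spectrum functor. Applying $\Omega^{\lambda}$ to the defining fiber sequence of $S^{\lambda+1}\langle \lambda+1\rangle$ produces a fiber sequence of $\Omega^{\lambda}$-algebras
\[
\Omega^{\lambda}(S^{\lambda+1}\langle \lambda+1\rangle) \longrightarrow \Omega^{\lambda}S^{\lambda+1} \longrightarrow \Omega^{\infty}\Sigma H\underline{\mathbb{Z}}.
\]
Because $\underline{\mathbb{Z}}$ is the constant Mackey functor, the final term is $K(\underline{\mathbb{Z}}, 1) \simeq S^{1}$ with trivial $G$-action, so the sequence exhibits $\Omega^{\lambda}S^{\lambda+1}$ as the homotopy orbits of a $\mathbb{Z}$-action on $\Omega^{\lambda}(S^{\lambda+1}\langle \lambda+1\rangle)$.

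Write $M := (\Omega^{\lambda}(S^{\lambda+1}\langle \lambda+1\rangle))^{\mu}$. The compatibility of Thom spectra with homotopy orbits then gives an equivalence
\[
(\Omega^{\lambda}S^{\lambda+1})^{\mu} \simeq M_{h\mathbb{Z}},
\]
in which the $\mathbb{Z}$-action on $M$ is multiplication by the unit of $\pi_0^G S^0_{(p)}$ corresponding to the restriction $S^{1} \to BGL_{1}(S^0_{(p)})$. Since $\mu$ is built from $1-p$, and the Hurewicz map $S^{\lambda+1} \to K(\underline{\mathbb{Z}}, \lambda+1)$ sends the fundamental class to a generator, that unit is $1-p$. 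Combining with Theorem \ref{thm:MainThom} yields a cofiber sequence
\[
M \xrightarrow{\,p\,} M \longrightarrow H\underline{\mathbb{F}}_p,
\]
so that $M/p \simeq H\underline{\mathbb{F}}_p$.

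It remains to upgrade this to $M \simeq H\underline{\mathbb{Z}}_{(p)}$. The spectrum $M$ is connective, $p$-local, and satisfies $\pi_0^G M = \mathbb{Z}_{(p)}$ because the base is $G$-equivariantly connected and the Thom functor from connected $\Omega^{\lambda}$-algebras over $S^0_{(p)}$ preserves these properties. The Postnikov map $M \to H\underline{\mathbb{Z}}_{(p)}$ then has fiber $F$ that is equivariantly $1$-connected and $p$-local, with $F/p \simeq 0$ after comparing $M/p \simeq H\underline{\mathbb{F}}_p$ with $H\underline{\mathbb{Z}}_{(p)}/p \simeq H\underline{\mathbb{F}}_p$. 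This forces $F \simeq 0$, giving $M \simeq H\underline{\mathbb{Z}}_{(p)}$. The main obstacle I anticipate is the equivariant base-change equivalence $(\Omega^{\lambda}S^{\lambda+1})^{\mu} \simeq M_{h\mathbb{Z}}$ along with the correct identification of the $\mathbb{Z}$-action as multiplication by $1-p$; this is morally the monoidality of the Thom functor applied to a principal bundle, but verifying it carefully in the $G$-equivariant $\Omega^{\lambda}$-algebra setting (with the correct sign conventions) is the delicate technical step, most cleanly handled via the two-sided bar construction for Thom spectra.
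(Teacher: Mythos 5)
Your overall strategy is the same as the paper's: use the fiber sequence $\Omega^{\lambda}(S^{\lambda+1}\langle\lambda+1\rangle) \to \Omega^{\lambda}S^{\lambda+1} \to S^1$ to exhibit $(\Omega^{\lambda}S^{\lambda+1})^{\mu} \simeq \mathrm{H}\underline{\mathbb{F}}_p$ as the cofiber of a self-map of $M$, and identify that self-map with multiplication by $p$. (The paper trivializes the bundle over the two cells of $S^1$ rather than invoking $M_{h\mathbb{Z}}$, but this is the same bar-construction argument, and it identifies the self-map as multiplication by $p$ by first computing $\underline{\pi}_0 M$ rather than by reading off the monodromy; either route is fine.) However, your endgame has a genuine gap: from ``$F$ is equivariantly $1$-connected, $p$-local, and $F/p \simeq 0$'' it does \emph{not} follow that $F \simeq 0$. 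The spectrum $\Sigma^2\mathrm{H}\mathbb{Q}$ is $1$-connected, $p$-local, and killed by smashing with the mod $p$ Moore spectrum, since $p$ acts invertibly on $\mathbb{Q}$. What is missing is exactly the hypothesis needed for Nakayama's lemma: one must know that the homotopy groups of the genuine fixed-point spectra $M^H$ are finitely generated over $\mathbb{Z}_{(p)}$ in each degree. The paper supplies this by isotropy separation, reducing to geometric fixed points where finite generation follows from the Thom isomorphism; without some such argument your conclusion ``this forces $F \simeq 0$'' is unjustified.

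A second, smaller issue: your claim that $\pi_0^G M = \mathbb{Z}_{(p)}$ ``because the base is $G$-equivariantly connected'' is not a proof. $G$-connectivity of the base only shows that $\underline{\pi}_0 M$ is a quotient of the $p$-localized Burnside Mackey functor $\underline{A}_{(p)}$; identifying the quotient as $\underline{\mathbb{Z}}_{(p)}$ (i.e.\ imposing the relations $[G/K] = |G/K|$) requires equivariant input analogous to the norm and transfer arguments of Section \ref{sec:norm}, which is what the paper gestures at with ``arguments similar to those in the proof of the main theorem.'' You need this computation both to form the truncation map $M \to \mathrm{H}\underline{\mathbb{Z}}_{(p)}$ with the correct target and to know that $M/p \to \mathrm{H}\underline{\mathbb{Z}}_{(p)}/p$ is an equivalence (an isomorphism on $\underline{\pi}_0$ between two Eilenberg--MacLane objects), so it cannot be waved away.
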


\begin{remark}
Collections of little disks in the representation $\lambda$ form an equivariant operad $\mathcal{O}_{\lambda}$, leading to the notion of an $\mathbb{E}_{\lambda}$-algebra \cite{hauschild}.  A nice modern discussion of $\mathbb{E}_\lambda$-algebras may be found in \cite{hill-disks}.

If $X$ is any pointed $G$-space, then $\Omega^{\lambda} X$ is naturally an $\mathbb{E}_\lambda$-algebra in $G$-spaces.  If $X$ is a $G$-connected, pointed $G$-space, then the free $\mathbb{E}_\lambda$-algebra on $X$ is $\Omega^{\lambda} \Sigma^{\lambda} X$ \cite[Theorem 1]{rs}. Any $G$-$\mathbb{E}_\infty$-ring, such as the Eilenberg--Maclane object $\mathrm{H}\underline{\mathbb{Z}}_{(p)}$
\cite[\S 5.7]{schwede}, is naturally an $\mathbb{E}_\lambda$-algebra object in $G$-Spectra.  It was proved by Lewis \cite[Remark X.6.4]{LMS} that the Thom spectrum of a $\Omega^{\lambda}$-map is naturally an $\mathbb{E}_{\lambda}$-algebra in $G$-spectra, and it follows from our proof that the equivalence of Theorem \ref{thm:introZ} is one of $\mathbb{E}_\lambda$-algebras.
\end{remark}

\begin{remark}
Just as Mahowald's original Theorem \ref{thm:MahE2} may be phrased as the fact that $\mathrm{H}\mathbb{F}_2$ is the free $\mathbb{E}_2$-algebra with a nullhomotopy $2 \simeq 0$ \cite{akhil-niko-justin, omar-toby}, our Theorem \ref{thm:MainThom} may be read as the statement that the free $\mathbb{E}_\lambda$-algebra with $p \simeq 0$ is $\mathrm{H}\underline{\mathbb{F}}_p$.
\end{remark}

\subsubsection*{Conventions} We freely use the language
of $\infty$-categories (alias quasicategories,
alias weak Kan complexes) in the form developed
in \cite{HTT} (though we give specific references within \cite{HTT} whenever technical results are applied).  We denote by $\mathsf{Spaces}$ the $\infty$-category
of spaces, and append decorations to obtain the
$\infty$-category of pointed spaces or of $G$-spaces
for a finite group $G$,
the latter being defined as the $\infty$-category
of presheaves $\mathsf{Psh}(\mathcal{O}_G)$ on the
1-category of transitive $G$-sets. We denote
by $\mathsf{Sp}^G$ the $\infty$-category of
(genuine) $G$-spectra, and assume the reader
is familiar with the standard notations of equivariant
homotopy theory. We could not improve on the
summary given in
\cite[\S2-3]{HHR}, and recommend it to the reader. In particular,
we will require the Eilenberg-MacLane
spectrum associated to the constant Mackey
functor $\underline{\mathbb{F}}_p$, and the
notion of the geometric fixed points $X^{\Phi G}$
of a $G$-spectrum. Finally, we denote the $G$-space
classifying equivariant stable spherical fibrations
with fiber of type $S^0$ by
$\mathrm{BGL}_1(S^0)$. (See \S\ref{sec:prime2} 
for more details).

\subsubsection*{Acknowledgements}
The authors would like to thank Mark Behrens, Jun-Hou Fung, Mike Hopkins, Inbar Klang, and Ishan Levy for helpful conversations related to this paper. We also thank
Nick Kuhn and Peter May for comments on an earlier draft.  Special thanks are due to the anonymous referee, whose painstaking work has led to enormous improvement of the exposition.

\section{Outline of the proof of Theorem \ref{thm:MainThom}} \label{sec:outline}

We fix, for the remainder of the paper, a prime $p$ as well as a non-negative integer $n$.  We let $G=C_{p^n}$ denote the cyclic group of order $p^n$.

The Thom spectrum of the map 
$$S^1 \stackrel{1-p}{\longrightarrow} \mathrm{BGL}_1(S^0_{(p)})$$
is the mod $p$ Moore spectrum $M(p)$, which admits a Thom class
for $\mathrm{H}\underline{\mathbb{F}}_p$. It follows formally
(see the argument for Proposition 5.3 in \cite{behrens-wilson})
that there is a Thom class
	\[
	\alpha: \left(\Omega^{\lambda}S^{\lambda+1}\right)^{\mu}
	\longrightarrow \mathrm{H}\underline{\mathbb{F}}_p.
	\]
Our goal is to show that this map is an equivalence of $G$-spectra.
By induction on the order of the group
(the base case being supplied by Hopkins-Mahowald),
it will suffice to prove that the map on geometric fixed
points
	\[
	\alpha^{\Phi G}: 
	\left(\left(\Omega^{\lambda}S^{\lambda+1}\right)^{\mu}\right)^{\Phi G}
	\longrightarrow \mathrm{H}\underline{\mathbb{F}}^{\Phi G}_p
	\]
is an equivalence.

The proof proceeds in several steps.
	\begin{itemize}
	\item[\underline{Step 1}.] Compute the homotopy groups
	of $\left(\left(\Omega^{\lambda}
	S^{\lambda+1}\right)^{\mu}\right)^{\Phi G}$.
	\item[\underline{Step 2}.] Compute the homotopy groups of
	$\mathrm{H}\underline{\mathbb{F}}^{\Phi G}_p$.
	\item[\underline{Step 3}.] Show that $\alpha^{\Phi G}$ 
	is a ring map (for some ring structure on the source).
	\item[\underline{Step 4}.]
	Show that $\alpha^{\Phi G}$ hits algebra generators
	in the target.
	\end{itemize}
	
The computation in Step 2 is well-known, and is stated as
Lemma \ref{lem:em-geo-fixed} below. At odd primes, one learns that
	\[
	\pi_*\mathrm{H}\underline{\mathbb{F}}_p^{\Phi G}
	= \mathbb{F}_p[t] \otimes \Lambda(s), \, |t|=2, |s|=1.
	\]
Step 1 is more difficult. In \S\ref{sec:action} we show that
$\left(\Omega^{\lambda}S^{\lambda+1}\right)^{G} =
\Omega^2S^3 \times \Omega S^2$, and,
after developing some more properties
of this Thom spectrum,
we verify (Lemma \ref{lem:htpy-thom-geo-fixed}) that the homotopy groups
of
$\left(\left(\Omega^{\lambda}S^{\lambda+1}\right)^{\mu}\right)^{\Phi G}$
are \emph{additively} given by
	\[
	\pi_*\left(\left(\Omega^{\lambda}S^{\lambda+1}\right)^{\mu}\right)^{\Phi G}
	= \mathbb{F}_p[x] \otimes \Lambda(y),\, |x|=2, |y|=1.
	\] 

There is a serious problem to be addressed in Step 3: by construction,
$\left(\Omega^{\lambda}S^{\lambda+1}\right)^{\mu}$ is an
$\mathbb{E}_{\lambda}$-spectrum and the Thom class is represented
by an $\mathbb{E}_{\lambda}$-map. When we take geometric fixed points,
we are left without any obvious multiplicative structure on either the source or the map.  We must do additional work to equip with $\alpha^{\Phi G}$ with additional multiplicative structure.  The weakest structure that suffices for our purposes is that of an $\mathbb{A}_2$-algebra; we recall the definition below:

\begin{dfn}\label{dfn:a2}
An $\mathbb{A}_2$-algebra in a symmetric monoidal $\infty$-category $(\mathcal{C},\otimes,\mathbf{1})$ consists of the following data:
\begin{enumerate}
\item An object $X \in \mathcal{C}$.
\item A \emph{multiplication} map $m:X \otimes X \to X$.
\item A \emph{unit} map $u:\mathbf{1} \to X$.
\item Choices of $2$-simplices filling each of the diagrams
$$
\begin{tikzcd}
X \otimes 1 \arrow[swap]{d}{id. \otimes u} \arrow{r}{\simeq} &X &&& 1 \otimes X \arrow[swap]{d}{u \otimes id.} \arrow{r}{\simeq} &X\\ 
X \otimes X, \arrow[swap]{ur}{m} &&&& X \otimes X. \arrow[swap]{ur}{m}
\end{tikzcd}
$$
\end{enumerate}
\end{dfn}

\begin{rmk}
An $\mathbb{A}_2$-algebra in pointed spaces (with the cartesian symmetric monoidal structure) is an $H$-space.
\end{rmk}

To accomplish Step 3, we show that $\alpha^{\Phi G}$ is an $\mathbb{A}_2$-algebra morphism for a certain $\mathbb{A}_2$-structure on the source.
To do so requires a different argument at the prime 2
than at odd primes.
	\begin{itemize}
	\item[\underline{Step 3a})] In \S\ref{sec:action} we show that
	$S^{\lambda+1}= \Omega \mathbb{H}P^{\infty}$ for
	a certain $G$-action on $\mathbb{H}P^{\infty}$. 
	We then show (\S\ref{sec:prime2}) that, when $p=2$, the map
		\[
		S^{\lambda+1} \longrightarrow \mathrm{B}^{\lambda}
		\mathrm{BGL}_1(S^{0}_{(2)})
		\]
	deloops once. Thus, $\alpha^{\Phi G}$ is an $\mathbb{A}_{\infty}$-map
	in this case. This section contains some material that may be of
	independent interest, such as a description of one of the spaces
	in the equivariant $K$-theory spectrum in terms of bundles of (twisted)
	$G$-$\mathbb{H}$-modules. 
	\item[\underline{Step 3b})] 
	In \S\ref{sec:exotic} we produce an exotic $\mathbb{A}_2$-structure
	on $S^{\lambda+1}$ at odd primes with respect to which
	the map $\mu$ is an $\mathbb{A}_2$-map in 
	$\mathbb{E}_{\lambda}$-spaces.
	Our proof uses a small dose of unstable equivariant homotopy theory,
	in particular the EHP sequence.
	\end{itemize}
	
Finally, we come to Step 4. The element 
$s \in \pi_1\mathrm{H}\underline{\mathbb{F}}_p^{\Phi G}$
arises as a witness to the fact that the composite
	\[
	S^0 \stackrel{\nabla}{\longrightarrow}
	G_+ \stackrel{1}{\longrightarrow} \mathrm{H}\underline{\mathbb{F}}_p
	\]
is null. Said differently, $s$ witnesses the vanishing of the element
$[G] \in \pi_0^{G}S^0$ in the Hurewicz image. Nonequivariantly,
the zeroth homotopy group of $\left(\Omega^2S^3\right)^{\mu}$
is already detected by the map
	\[
	(S^1)^{\mu} = M(p) \longrightarrow \left(\Omega^2S^3\right)^{\mu}.
	\]
Equivariantly, this is no longer true. Recall \cite{segal, tomdieck}
that $\pi_0^{G}S^0 = A(G)$ is the Burnside ring of finite $G$-sets.
While the element $p$ dies in the Moore space $(S^1)^{\mu}$,
the same is not true of the elements $[G/K]$ for $K \subsetneq G$. 
The proof that these elements vanish in the Hurewicz image of
$\left(\Omega^{\lambda}S^{\lambda+1}\right)^{\mu}$ is
a consequence of the vanishing of $p$
together with the existence of \emph{norms} supplied by
the $\mathbb{E}_{\lambda}$-structure. This is proved in
\S\ref{sec:norm} and the final pieces of the proof of Step 4
and the main result are then spelled out in \S\ref{sec:proof}.

We end in \S\ref{sec:epilogue} with an explanation of how to produce 
	$\mathrm{H}\underline{\mathbb{Z}}_{(p)}$ as a Thom spectrum
	as well as some unanswered questions.

\section{Quaternionic projective space}\label{sec:action}

We begin by examining a very natural action of $S^1$ on
quaternionic projective space. It is in some ways analogous
to the natural action of $C_2$ on $\mathbb{C}P^{\infty}$
by complex conjugation.

\begin{construction}\label{cstr:quat-action} Consider the action of
$\mathrm{Sp}(1)$ on the quaternions $\mathbb{H}$
by conjugation. The center $\{\pm 1\}$ acts trivially,
so this produces an action of $\mathrm{Sp}(1)/\{\pm1\}
= \mathrm{SO}(3)$ on $\mathbb{H}$. This produces
an action of $S^1 \subseteq SO(3)$ on $\mathbb{H}$
which can be described in two equivalent ways:
	\begin{itemize}
	\item $z \in S^1$ acts on $q \in \mathbb{H}$
	by $(\sqrt{z}) q (\sqrt{z})^{-1}$;
	\item $z \in S^1$ acts on $q = u+vj$
	by $u+ (zv)j$, where $u,v \in \mathbb{C}$.
	\end{itemize}
From the first description it's clear that $\mathbb{H}$
is an $S^1$-equivariant algebra. From the second
description it's clear that $\mathbb{H} = 2+\lambda$
as a representation.
\end{construction}

\begin{definition}
Let $\mathbb{H}P^{\infty}$ denote the $S^1$-space
obtained from $\mathbb{H}^{\infty}$ by imposing the
relation
	\[
	[q_0: q_1: \cdots] \sim [q_0h: q_1h: \cdots], \, h \in \mathbb{H}^{\times}
	\]
and acting by $S^1$ componentwise as in Construction \ref{cstr:quat-action}.
\end{definition}
We will, without comment, also denote by $\mathbb{H}P^{\infty}$
the $G$-space obtained by restricting the action to a finite subgroup
$G \subseteq S^1$. We will assume that $G$ is a fixed,
\emph{nontrivial} finite subgroup of $S^1$ for the remainder of
this section.

\begin{remark}\label{rmk:quat-fixed} It follows from the definition
that the natural inclusion $\mathbb{C}P^{\infty} \subseteq \mathbb{H}P^{\infty}$
is equivariant for the \emph{trivial} action on complex projective space
and identifies the fixed points
	\[
	\mathbb{C}P^{\infty} = \left(\mathbb{H}P^{\infty}\right)^{G}.
	\]
The reader might compare this to the equivalence
$ \mathbb{R}P^{\infty}=\left(\mathbb{C}P^{\infty}\right)^{C_2}$
where $C_2$ acts by complex conjugation on projective space.
\end{remark}

We now study the loop spaces of $\mathbb{H}P^{\infty}$.

\begin{proposition}
$\Omega \mathbb{HP}^{\infty} \simeq S^{\lambda+1}$.
\end{proposition}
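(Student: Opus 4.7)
The plan is to build a $G$-equivariant map $\eta: S^{\lambda+1} \to \Omega \mathbb{H}P^{\infty}$ as the loop adjoint of the inclusion $\mathbb{H}P^1 \hookrightarrow \mathbb{H}P^{\infty}$, and then verify it is a $G$-equivalence by checking on $H$-fixed points for every subgroup $H \subseteq G$, using the standard identity $(\Omega X)^H = \Omega(X^H)$.

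The first step is to identify $\mathbb{H}P^1$ as an equivariant representation sphere. By the second description in Construction \ref{cstr:quat-action}, writing $q = u + vj$ with $u, v \in \mathbb{C}$, the $G$-action on $\mathbb{H}$ fixes $u$ and rotates $v$ through $\lambda$. Hence $\mathbb{H} \cong 2 + \lambda$ as real $G$-representations, so $\mathbb{H}P^1 = \mathbb{H} \cup \{\infty\}$ is $G$-equivalent to $S^{2+\lambda}$ as pointed $G$-spaces (based at infinity, which agrees with the basepoint $[1:0:0:\dots]$ of $\mathbb{H}P^{\infty}$). Since $S^{2+\lambda} = \Sigma S^{\lambda+1}$ under trivial $S^1$-suspension, the inclusion $\mathbb{H}P^1 \hookrightarrow \mathbb{H}P^{\infty}$ adjoints to the desired $G$-map $\eta$.

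For the fixed-point check, when $H = \{e\}$ I recover the classical equivalence $S^3 \simeq \Omega \mathbb{H}P^{\infty}$ arising from the inclusion of the bottom cell $\mathbb{H}P^1 = S^4 \hookrightarrow \mathbb{H}P^{\infty} \simeq \mathrm{BSp}(1)$. When $H$ is nontrivial, $\lambda$ remains a faithful (hence fixed-point-free) representation of $H$, so $(S^{\lambda+1})^H = S^1$; meanwhile Remark \ref{rmk:quat-fixed} identifies $(\mathbb{H}P^{\infty})^H$ with $\mathbb{C}P^{\infty}$, and the same argument applied to $\mathbb{H}P^1$ identifies $(\mathbb{H}P^1)^H$ with $\mathbb{C}P^1$. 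Thus $\eta^H$ is the adjoint of the bottom-cell inclusion $\mathbb{C}P^1 = S^2 \hookrightarrow \mathbb{C}P^{\infty}$, which is the standard equivalence $S^1 \simeq \Omega \mathbb{C}P^{\infty}$.

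The substantive content is really the representation-theoretic bookkeeping: pulling the decomposition $\mathbb{H} = 2 + \lambda$ out of Construction \ref{cstr:quat-action}, and confirming that the inclusions of fixed points inside $\mathbb{H}P^{\infty}$ line up with the classical chain $\mathbb{C}P^1 \subseteq \mathbb{C}P^{\infty} \subseteq \mathbb{H}P^{\infty}$. Once these identifications are settled, both checks reduce to well-known adjoint equivalences for complex and quaternionic projective space, so I do not expect a serious obstacle.
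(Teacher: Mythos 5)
Your proposal is correct and follows the same route as the paper: take the adjoint of the equivariant inclusion $\mathbb{H}P^1 \simeq S^{\lambda+2} \hookrightarrow \mathbb{H}P^{\infty}$ and check that it is an equivalence on underlying spaces and on fixed points for nontrivial subgroups, where it becomes the standard $S^1 \to \Omega\mathbb{C}P^{\infty}$. You simply spell out the representation-theoretic identification of $\mathbb{H}P^1$ and the fixed-point computation in slightly more detail than the paper does.
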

\begin{proof} The usual inclusion $S^{\lambda+2} \simeq \mathbb{H}P^1
\to \mathbb{H}P^{\infty}$ is equivariant for the action above
and induces a map $S^{\lambda+1} \to \Omega \mathbb{H}P^{\infty}$.
This is an underlying equivalence, and on fixed points for nontrivial
subgroups we have the standard map (see
Remark \ref{rmk:quat-fixed})
$S^1 \to \Omega \mathbb{C}P^{\infty}$, which is also an equivalence.
\end{proof}

\begin{remark}\label{rmk:nu-fixed-eta} As a corollary of the proof, we record that
the map $\nu: S^{2\lambda+3} \to S^{\lambda+2}$ becomes
$\eta: S^{3} \to S^2$ upon passage to fixed points.
\end{remark}

\begin{proposition}
If $G=C_2$, then $\Omega^{\sigma} \mathbb{H}P^{\infty} \simeq S^{\rho+1}$.
\end{proposition}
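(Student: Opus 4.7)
I would mimic the strategy of the previous proposition. The inclusion $\mathbb{H}P^1 \hookrightarrow \mathbb{H}P^\infty$ is $C_2$-equivariant, and since $\mathbb{H}P^1 = S^{\mathbb{H}} = S^{2+2\sigma} = \Sigma^\sigma S^{\rho+1}$ (using $\rho = 1+\sigma$), this inclusion is adjoint to a $C_2$-map
\[
f \colon S^{\rho+1} \longrightarrow \Omega^\sigma \mathbb{H}P^\infty.
\]
The plan is to show that $f$ is a $C_2$-equivalence by checking it on underlying spaces and on $C_2$-fixed points.

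Underlying, $f$ is adjoint to the inclusion $S^4 = \mathbb{H}P^1 \hookrightarrow \mathbb{H}P^\infty$, which gives the standard generator of $\pi_3\Omega \mathbb{H}P^\infty \cong \mathbb{Z}$ and is therefore an equivalence. For the fixed-point check, I would first establish the general formula
\[
(\Omega^\sigma X)^{C_2} \simeq \mathrm{fib}\bigl(X^{C_2} \to X\bigr),
\]
which follows since a pointed $C_2$-equivariant map $\phi \colon S^\sigma \to X$ is determined by its restriction to the arc $[0,\infty] \subset \mathbb{R}_\sigma \cup \{\infty\} = S^\sigma$, namely a path in $X$ from the basepoint to the point $\phi(0) \in X^{C_2}$. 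Applied to $X = \mathbb{H}P^\infty$, whose fixed points are $\mathbb{C}P^\infty$ by Remark~\ref{rmk:quat-fixed}, and using the classical fibration $\mathrm{Sp}(1)/\mathrm{U}(1) \to \mathrm{BU}(1) \to \mathrm{BSp}(1)$, this identifies
\[
(\Omega^\sigma \mathbb{H}P^\infty)^{C_2} \simeq \mathrm{fib}(\mathbb{C}P^\infty \to \mathbb{H}P^\infty) \simeq \mathrm{Sp}(1)/\mathrm{U}(1) = S^2,
\]
matching $(S^{\rho+1})^{C_2} = S^2$.

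It remains to verify the induced map $f^{C_2} \colon S^2 \to S^2$ is an equivalence. Tracing through the adjunction, the composite of $f^{C_2}$ with the projection $\mathrm{fib}(\mathbb{C}P^\infty \to \mathbb{H}P^\infty) \to \mathbb{C}P^\infty$ recovers the restriction of $\mathbb{H}P^1 \hookrightarrow \mathbb{H}P^\infty$ to $C_2$-fixed points, namely the standard inclusion $\mathbb{C}P^1 \hookrightarrow \mathbb{C}P^\infty$. Since this composite and the projection both induce isomorphisms on $\pi_2$, the map $f^{C_2}$ must be a degree $\pm 1$ self-map of $S^2$, hence an equivalence. The main obstacle I anticipate is the careful identification $(\Omega^\sigma X)^{C_2} \simeq \mathrm{fib}(X^{C_2} \to X)$ together with the bookkeeping to trace $f$ through this identification on fixed points; everything else proceeds as in the previous proposition.
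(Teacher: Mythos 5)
Your proposal is correct and follows essentially the same route as the paper: both take the adjoint of the equivariant inclusion $S^{2\sigma+2}=\mathbb{H}P^1\hookrightarrow\mathbb{H}P^{\infty}$, check it is an underlying equivalence, and identify $(\Omega^{\sigma}\mathbb{H}P^{\infty})^{C_2}$ with $\mathrm{fib}(\mathbb{C}P^{\infty}\to\mathbb{H}P^{\infty})\simeq S^3/S^1\simeq S^2$ (the paper obtains your fiber identification from the cofiber sequence $C_{2+}\to S^0\to S^{\sigma}$). Your extra verification that $f^{C_2}$ has degree $\pm1$ is a welcome bit of care that the paper leaves implicit.
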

\begin{proof} Again, the usual inclusion
$S^{2\sigma+2} \to \mathbb{H}P^{\infty}$ is equivariant and we get
a map $S^{\sigma+2} = S^{\rho +1} \to \Omega^{\sigma}\mathbb{H}P^{\infty}$
which is an underlying equivalence. To compute the fixed points of
the right hand side, we use the fiber sequence of spaces
	\[
	\left(\Omega^{\sigma}\mathbb{H}P^{\infty}\right)^{C_2}
	\to \mathbb{C}P^{\infty} \to \mathbb{H}P^{\infty}
	\]
arising from the cofiber sequence
$C_{2+} \to S^0 \to S^{\sigma}$ of $C_2$-spaces.
But the fiber of $\mathrm{B}S^1 \to \mathrm{B}S^3$ is
$S^3/S^1 \simeq S^2$, by the Hopf fibration, which completes the proof.
\end{proof}

\begin{corollary}
If $G=C_2$, then
$$\Omega^{\lambda} S^{\lambda+1} \simeq \Omega^{\rho} S^{\rho+1}.$$
Moreover, this equivalence respects the inclusion of $S^1$ up to homotopy.
\end{corollary}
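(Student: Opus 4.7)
The plan is to use the two preceding propositions to identify both sides of the equivalence with the common space $\Omega^{2\sigma+1}\mathbb{H}P^{\infty}$, and then to check that the two unit maps from $S^1$ become homotopic under this identification.

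First, applying $\Omega^{\lambda}$ to the equivalence $\Omega\mathbb{H}P^{\infty}\simeq S^{\lambda+1}$ from the first proposition yields
\[
\Omega^{\lambda}S^{\lambda+1}\simeq\Omega^{\lambda+1}\mathbb{H}P^{\infty}.
\]
Similarly, applying $\Omega^{\rho}$ to the equivalence $\Omega^{\sigma}\mathbb{H}P^{\infty}\simeq S^{\rho+1}$ from the second proposition yields
\[
\Omega^{\rho}S^{\rho+1}\simeq\Omega^{\rho+\sigma}\mathbb{H}P^{\infty}.
\]
Since $G=C_2$, we have $\lambda=2\sigma$ and $\rho=1+\sigma$ as orthogonal $C_2$-representations, so
\[
\lambda+1 \;=\; 2\sigma+1 \;=\; \rho+\sigma,
\]
which gives the desired equivalence $\Omega^{\lambda}S^{\lambda+1}\simeq\Omega^{\rho}S^{\rho+1}$.

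For the second claim, the inclusion $S^1\hookrightarrow\Omega^{V}S^{V+1}$, for $V=\lambda$ or $V=\rho$, is the unit of the $(\Sigma^V,\Omega^V)$-adjunction applied to the trivial summand $S^1\subseteq S^{V+1}$. Both equivalences above were produced by looping the single inclusion $\mathbb{H}P^1\hookrightarrow\mathbb{H}P^{\infty}$, so under either identification the map from $S^1$ becomes the unit $S^1\to\Omega^{2\sigma+1}\Sigma^{2\sigma+1}S^1$ postcomposed with the map induced by $\Sigma^{2\sigma+1}S^1\simeq S^{2\sigma+2}\simeq\mathbb{H}P^1\hookrightarrow\mathbb{H}P^{\infty}$. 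These two composites therefore agree up to homotopy.

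I expect no conceptual difficulty; the only obstacle is the bookkeeping in the last paragraph, which amounts to checking that the triangle identities for the $(\Sigma^V,\Omega^V)$-adjunctions are compatible with the representation-theoretic identification $\lambda+1=\rho+\sigma$. This should be immediate once one writes down the maps carefully and uses that the two propositions above are both constructed by looping the same inclusion of $\mathbb{H}P^1$.
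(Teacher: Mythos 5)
Your proof is correct and is essentially the paper's own argument: both identify the two sides with $\Omega^{2\sigma+1}\mathbb{H}P^{\infty}$ via the two preceding propositions and the identity $\lambda+1=2\sigma+1=\rho+\sigma$ of $C_2$-representations. Your additional care with the inclusion of $S^1$ (tracing both unit maps back to the same looped inclusion $\mathbb{H}P^1\hookrightarrow\mathbb{H}P^{\infty}$) is exactly the intended justification for the second clause.
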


\begin{proof}
There is a chain of equivalences
$$\Omega^{\lambda} S^{\lambda+1} \simeq \Omega^{2 \sigma} \Omega \mathbb{H}P^{\infty} \simeq \Omega^{\rho} \Omega^{\sigma} \mathbb{H}P^{\infty} \simeq \Omega^{\rho} S^{\rho+1}.$$
\end{proof}

\begin{remark} If $V$ is a representation of $C_2$, then the free $\mathbb{E}_V$-algebra on any $C_2$-connected, pointed $C_2$-space $X$ is given by $\Omega^{V} \Sigma^{V} X$ \cite[Theorem 1]{rs}.  The above discussion therefore implies that the free $\mathbb{E}_{2\sigma}$-space and free $\mathbb{E}_{\rho}$-space on the pointed $C_2$-space $S^1$ coincide. A prototype of this result is that the
free group-like $\mathbb{E}_{\sigma}$-space and free group-like
$\mathbb{E}_{1}$-space on the pointed $C_2$-space
$S^0$ also coincide, i.e. 
$\Omega S^1 \simeq \Omega^{\sigma}S^{\sigma} \simeq \mathbb{Z}$
(with trivial action).
This can be proved in much the same way, using the equivalences
	\[
	\Omega^{\rho}\mathbb{C}P^{\infty} \simeq \mathbb{Z},
	\quad \Omega \mathbb{C}P^{\infty} \simeq S^{\sigma},
	\quad \text{and }\Omega^{\sigma}\mathbb{C}P^{\infty}
	\simeq S^1,
	\]
where $C_2$ acts on $\mathbb{C}P^{\infty}$ by complex
conjugation.
\end{remark}

As a consequence of the equivalence $\Omega \mathbb{H}P^{\infty}
\simeq S^{\lambda+1}$, we observe that $\Omega^{\lambda}S^{\lambda+1}$
inherits an $\mathbb{E}_{\lambda+1}$-structure.
This gives the fixed points $\left(\Omega^{\lambda}S^{\lambda+1}\right)^G$
an $\mathbb{E}_1$-algebra structure.

\begin{proposition} As an $\mathbb{E}_1$-space,
	\[
	\left(\Omega^{\lambda+1}\mathbb{H}P^{\infty}\right)^{G}
	\simeq \Omega^2S^3 \times \Omega S^2.
	\]
\end{proposition}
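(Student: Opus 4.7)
The plan is to first compute $(\Omega^{\lambda}\mathbb{H}P^{\infty})^G$ as a plain space and then loop once to extract the $\mathbb{E}_{1}$-structure. Since the extra summand of $\lambda+1$ is trivial, it commutes with taking $G$-fixed points, giving an identification $(\Omega^{\lambda+1}\mathbb{H}P^{\infty})^G \simeq \Omega\bigl((\Omega^{\lambda}\mathbb{H}P^{\infty})^G\bigr)$ of $\mathbb{E}_{1}$-spaces, where the right-hand $\mathbb{E}_{1}$-structure is the ordinary loop-space one. It therefore suffices to show $(\Omega^{\lambda}\mathbb{H}P^{\infty})^G \simeq S^{2}\times \Omega S^{3}$ as spaces.

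Applying $\mathrm{Map}_*^G(-,\mathbb{H}P^{\infty})$ to the cofiber sequence $S(\lambda)_+ \to D(\lambda)_+ \to S^{\lambda}$ of pointed $G$-spaces, and using $G$-contractibility of $D(\lambda)$ to identify $D(\lambda)_+\simeq S^{0}$, produces a fiber sequence
\[
(\Omega^{\lambda}\mathbb{H}P^{\infty})^G \longrightarrow \mathbb{C}P^{\infty} \longrightarrow \mathrm{Map}^G(S(\lambda),\mathbb{H}P^{\infty}).
\]
The right-hand term is identified with the free loop space $L\mathbb{H}P^{\infty}$: since $G$ acts freely on $S(\lambda)$, this mapping space is the space of sections of the bundle $S(\lambda)\times_G \mathbb{H}P^{\infty}\to S(\lambda)/G = S^{1}$, whose monodromy lies in the image of the ambient $S^{1}$-action of Construction~\ref{cstr:quat-action} and is therefore isotopic to the identity, trivializing the bundle up to homotopy. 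Under this identification, the map $\mathbb{C}P^{\infty} \to L\mathbb{H}P^{\infty}$ is constant-loop inclusion composed with the standard $\mathbb{C}P^{\infty}\hookrightarrow \mathbb{H}P^{\infty}$.

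To extract the homotopy fiber, compare with the evaluation fibration $S^{3} = \Omega\mathbb{H}P^{\infty} \to L\mathbb{H}P^{\infty} \to \mathbb{H}P^{\infty}$. The factorization $\mathbb{C}P^{\infty}\hookrightarrow\mathbb{H}P^{\infty}\xrightarrow{\mathrm{const}} L\mathbb{H}P^{\infty}$ yields a fiber sequence of homotopy fibers
\[
\Omega S^{3} \longrightarrow (\Omega^{\lambda}\mathbb{H}P^{\infty})^G \longrightarrow S^{2} \longrightarrow S^{3},
\]
where $S^{2} = S^{3}/S^{1}$ is the homotopy fiber of $\mathbb{C}P^{\infty}\hookrightarrow\mathbb{H}P^{\infty}$ coming from the Hopf fibration. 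Because $\pi_{2}S^{3} = 0$, the connecting map $S^{2}\to S^{3}$ is null, so the principal $\Omega S^{3}$-bundle over $S^{2}$ splits and $(\Omega^{\lambda}\mathbb{H}P^{\infty})^G \simeq S^{2}\times \Omega S^{3}$. Looping once yields the promised $\mathbb{E}_{1}$-equivalence with $\Omega S^{2}\times \Omega^{2}S^{3}$. The main technical step requiring care is the identification $\mathrm{Map}^G(S(\lambda),\mathbb{H}P^{\infty})\simeq L\mathbb{H}P^{\infty}$ together with the description of the induced map from $\mathbb{C}P^{\infty}$; given these, the remainder is a formal chase of homotopy fibers using the vanishing of $\pi_{2}S^{3}$.
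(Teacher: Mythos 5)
Your argument is correct, and it reaches the paper's key fiber sequence $\left(\Omega^{\lambda}\mathbb{H}P^{\infty}\right)^G \to S^2 \to S^3$ by a genuinely different decomposition. The paper splits $S^{\lambda}$ in two stages, using $S^{\lambda/2} := \mathrm{cofib}(G_+ \to S^0)$ and the cofiber sequence $G_+ \wedge S^1 \to S^{\lambda/2} \to S^{\lambda}$; on fixed points of mapping spaces this immediately yields a fiber sequence whose middle term is $\mathrm{fib}(\mathbb{C}P^{\infty} \to \mathbb{H}P^{\infty}) \simeq S^2$ and whose base is $\Omega\mathbb{H}P^{\infty}\simeq S^3$, with no free loop spaces in sight. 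You instead use the one-step presentation $S(\lambda)_+ \to S^0 \to S^{\lambda}$, which forces you to identify $\mathrm{Map}^G(S(\lambda),\mathbb{H}P^{\infty})$ with $L\mathbb{H}P^{\infty}$ (via the fiber-homotopy trivialization of the mapping torus coming from the ambient $S^1$-action) and to check that the resulting map out of $\mathbb{C}P^{\infty}$ is the constant-loop inclusion — here you are implicitly using that $\mathbb{C}P^{\infty}$ consists of $S^1$-fixed (not merely $G$-fixed) points, which does hold but deserves a word. Your subsequent comparison is really the composite-fiber lemma applied to $\mathbb{C}P^{\infty} \to L\mathbb{H}P^{\infty} \xrightarrow{\mathrm{ev}} \mathbb{H}P^{\infty}$ (whose composite is the standard inclusion), recovering exactly the sequence $X \to S^2 \to S^3$; from there the two proofs coincide, both splitting via $\pi_2 S^3 = 0$. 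The trade-off is that the paper's choice of cell structure does the trivialization for you, while yours is the more standard equivariant cell filtration at the cost of the extra monodromy argument; both are fine.
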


\begin{warning} At odd primes, the fixed points
$\mu^G$ of the map in Theorem \ref{thm:MainThom} is not an $\mathbb{E}_1$-map.
\end{warning}

\begin{proof}[Proof of the proposition]
Define $S^{\lambda/2}$ by the cofiber sequence
	\[
	G_+ \to S^0 \to S^{\lambda/2},
	\]
and notice that we have a cofiber sequence
	\[
	G_+ \wedge S^1 \to S^{\lambda/2} \to S^{\lambda}.
	\]
From the second cofiber sequence, we learn that there is a fiber sequence
	\[
	\Omega^{\lambda}\mathbb{H}P^{\infty}
	\to \mathrm{map}_*(S^{\lambda/2}, \mathbb{H}P^{\infty})
	\to
	\mathrm{map}_*(G_+ \wedge S^1, \mathbb{H}P^{\infty}).
	\]
Taking fixed points, we get
	\[
	\left(\Omega^{\lambda}\mathbb{H}P^{\infty}\right)^G
	\to \mathrm{map}_*(S^{\lambda/2}, \mathbb{H}P^{\infty})^G
	\to
	\Omega \mathbb{H}P^{\infty} \simeq S^3.
	\]
The first cofiber sequence identifies the middle term as the fiber
of the inclusion of fixed points:
	\[
	\mathrm{map}_*(S^{\lambda/2}, \mathbb{H}P^{\infty})^G
	\to \mathbb{C}P^{\infty} \to \mathbb{H}P^{\infty}.
	\]
In other words, $\mathrm{map}_*(S^{\lambda/2}, \mathbb{H}P^{\infty})^G
\simeq S^2$ and we can identify our previous fiber sequence with:
	\[
	\left(\Omega^{\lambda}\mathbb{H}P^{\infty}\right)^G
	\to S^2
	\to
	S^3.
	\]
The second map is null, so we learn that there is an equivalence:
	\[
	\left(\Omega^{\lambda}\mathbb{H}P^{\infty}\right)^G
	\simeq \Omega S^3 \times S^2,
	\]
and hence an equivalence of loop spaces:
	\[
	\left(\Omega^{\lambda+1}\mathbb{H}P^{\infty}\right)^G
	= \Omega \left(\Omega^{\lambda}\mathbb{H}P^{\infty}\right)^G
	\simeq \Omega(\Omega S^3 \times S^2).
	\]
\end{proof}

We stress that the above result does not concern
multiplicative structure on the Thom spectrum in question.
This is the subject of the next section at the prime two,
and of the subsequent section at odd primes.

\section{Extra structure at the prime \texorpdfstring{$2$}{2}}\label{sec:prime2}

Hopkins observed that the map $\mu: \Omega^2S^3 \to \mathrm{BO}$
admits a \emph{triple} delooping as the composite:
	\[
	\mathbb{H}P^{\infty} = \mathrm{BSp}(1) \longrightarrow
	\mathrm{BSp} \simeq \mathrm{B}^5\mathrm{O}
	\stackrel{\eta}{\longrightarrow} \mathrm{B}^4\mathrm{O}.
	\]
We would like to establish an equivariant version of this result.
The statement requires a few preliminaries.

The first results of this section hold for any finite subgroup $G \subseteq S^1$.  We will indicate later when we must restrict attention to $G=C_{2^n}$.

\begin{definition} A \textbf{$G$-$\mathbb{H}$-module}
is a real $G$-representation $V$ equipped with a 
$G$-equivariant algebra map $\mathbb{H} \to \mathrm{End}(V)$.
Here $\mathrm{End}(V)$ is the $G$-representation of 
all endomorphisms and we use the $G$-action
on $\mathbb{H}$ constructed in (\ref{cstr:quat-action}). 
More generally, 
a \textbf{$G$-$\mathbb{H}$-bundle} on a
$G$-space $X$ is a $G$-equivariant, real vector bundle
$E \to X$ together with a $G$-equivariant algebra
map $\mathbb{H} \to \mathrm{End}(E)$. 
\end{definition}

\begin{construction} For $F = \mathbb{R}$ or $\mathbb{H}$,
let $\mathcal{U}_{F}$ be a \textbf{complete $G$-$F$-universe}.
That is: $\mathcal{U}_F$ is a direct sum of infinitely many
finite dimensional $G$-$F$-modules which contains every
finite dimensional $G$-$F$-module as a summand (up
to isomorphism). Let $\mathrm{Gr}^F(\mathcal{U}_F)$
denote the infinite grassmanian with its induced $G$-action.
Then we define:
	\begin{align*}
	\mathrm{BO}_G=
	\mathrm{BGL}(\mathbb{R})&:= \mathrm{Gr}^{\mathbb{R}}
	(\mathcal{U}_{\mathbb{R}}),\\
	\mathrm{BGL}(\mathbb{H})&:= \mathrm{Gr}^{\mathbb{H}}
	(\mathcal{U}_{\mathbb{H}}).
	\end{align*}
\end{construction}

The $G$-space $\mathrm{BO}_G$ is well-known, and there
is an equivalence
	\[
	\Omega^{\infty}\mathrm{KO}_G = \mathbb{Z}
	\times \mathrm{BO}_G.
	\]
	
\begin{warning} Since we have
$\mathrm{BO}_G \simeq \mathrm{BGL}(\mathbb{R})$
and the former is well-known, we will use the notation
$\mathrm{BO}_G$. Beware, however, that
the $G$-space $\mathrm{BGL}(\mathbb{H})$
is \emph{not} the same as the space
$\mathrm{BSp}_G$ associated to equivariant, symplectic $K$-theory.
The latter does not incorporate a nontrivial action of $G$ on
$\mathbb{H}$. 
\end{warning}

\begin{warning} Neither $\mathrm{BO}_G$ nor
$\mathrm{BGL}(\mathbb{H})$ are equivariantly connected
when $G$ is nontrivial. For example, $\pi_0^G\mathrm{BO}_G$
is the group of virtual real representations of virtual dimension zero.
\end{warning}

\begin{theorem}[Karoubi] 
	\[
	\Omega^{\infty}\Sigma^{\lambda+2}\mathrm{KO}_G
	\simeq \mathbb{Z} \times \mathrm{BGL}(\mathbb{H})
	\]
where $\mathrm{BGL}(\mathbb{H})$ is the $G$-space
constructed above.
\end{theorem}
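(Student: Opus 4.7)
The plan is to prove this as an instance of equivariant Bott/Karoubi periodicity, using the identification $\mathbb{H} \simeq \lambda + 2$ of real $G$-representations from Construction \ref{cstr:quat-action}: the $G$-representation by which we suspend coincides with the underlying $G$-representation of the algebra whose module bundles we classify.

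First I would construct a Bott-type comparison map
\[
\Phi: \mathbb{Z} \times \mathrm{BGL}(\mathbb{H}) \longrightarrow \Omega^{\infty}\Sigma^{\lambda+2}\mathrm{KO}_G
\]
using clutching. A finite-dimensional $G$-$\mathbb{H}$-module $V$ carries a $G$-equivariant action of the unit quaternions $\mathrm{Sp}(1) = S(\mathbb{H})$, because the $G$-action on $\mathbb{H}$ of Construction \ref{cstr:quat-action} is by algebra automorphisms. Clutching two copies of the trivial $V$-bundle on the hemispheres of $S^{\mathbb{H}} = S^{\lambda+2}$ by this action produces a $G$-equivariant real vector bundle on $S^{\lambda+2}$, i.e.\ a class in $\widetilde{\mathrm{KO}}^G(S^{\lambda+2})$. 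Making this construction functorial in $V$ via the universal bundle on $\mathrm{Gr}^{\mathbb{H}}(\mathcal{U}_{\mathbb{H}})$, and allowing for the $\mathbb{Z}$-factor of virtual dimension, yields $\Phi$.

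Second, I would verify that $\Phi$ is a $G$-equivalence by checking that $\Phi^H$ is an equivalence of spaces for every subgroup $H \subseteq G$ and reducing to classical Bott periodicity. On underlying spaces ($H = e$), $\Phi$ specializes to the classical clutching map exhibiting $\mathbb{Z} \times \mathrm{BSp} \simeq \Omega^4(\mathbb{Z} \times \mathrm{BO})$, which is an equivalence by Bott's theorem. For nontrivial $H \leq G$, one decomposes both sides according to the isotypical components of the restricted $H$-action on $\mathbb{H} = 2 + \lambda$: the $H$-fixed points split as products indexed by the $H$-characters appearing, and on each isotypical piece the comparison reduces to a classical (possibly twisted) Bott periodicity in real or complex equivariant K-theory.

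The main obstacle is the careful bookkeeping for this isotypical decomposition on fixed points, particularly aligning the clutching data with the appropriate Bott classes on the nontrivial $\lambda|_H$-summands. A cleaner alternative, which may be what the authors have in mind, is to invoke an off-the-shelf equivariant Karoubi/Bott periodicity statement (as in the Lewis-May-Steinberger framework \cite[Chapter X]{LMS}, or Atiyah-Segal): the $G$-algebra $\mathbb{H}$ defines an equivariant Morita class whose associated Thom class in $\mathrm{KO}_G^{\lambda+2}$ delivers the equivalence $\Sigma^{\lambda+2}\mathrm{KO}_G \simeq K_G(\mathbb{H}\text{-mod})$ directly, and the zeroth space of the right-hand side is, by definition of the Grassmannian model, precisely $\mathbb{Z} \times \mathrm{BGL}(\mathbb{H})$.
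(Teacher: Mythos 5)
Your fallback suggestion is essentially what the paper does: the authors invoke Karoubi's ``fundamental theorem'' for equivariant $K$-theory of Clifford-module bundles, using the $G$-equivariant isomorphism $C\ell(\lambda)\cong\mathbb{H}$ (so $C\ell(2+\lambda)\cong M_2(\mathbb{H})$ and $C\ell(3+\lambda)\cong M_2(\mathbb{H})\times M_2(\mathbb{H})$), and then Morita invariance plus a Segal-type argument to reduce the resulting bundle data to maps into quaternionic Grassmannians in a complete $G$-$\mathbb{H}$-universe. So the ``off-the-shelf'' route you sketch at the end is the intended one, and your identification of $\mathbb{H}$ with $C\ell(\lambda)$ as the reason the suspension coordinate matches the coefficient algebra is exactly the right mechanism.

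Your primary route (an explicit clutching map followed by a fixed-point check) is a genuinely different strategy, and the clutching construction itself is sound: the module action $S(\mathbb{H})\times V\to V$ is $G$-equivariant because $G$ acts on $\mathbb{H}$ by algebra automorphisms, so it is a legitimate equivariant clutching function on the equator of $S^{\lambda+2}$. The gap is in the second step. The phrase ``careful bookkeeping'' conceals the entire content of the theorem: for nontrivial $H\subseteq G$ the fixed points $\mathrm{BGL}(\mathbb{H})^H$ classify modules over the twisted group algebra $\mathbb{H}\rtimes H$ (whose Morita type varies with $H$ --- e.g.\ for $H=C_2$ it is $M_2(\mathbb{C})$, so the fixed points involve $\mathrm{BU}$ rather than $\mathrm{BSp}$ factors), while $\bigl(\Omega^{\lambda+2}(\mathbb{Z}\times\mathrm{BO}_G)\bigr)^H$ must be decomposed using the $RO(H)$-graded structure of $\mathrm{KO}_H$ and the nontrivial summand $\lambda|_H$; matching these two decompositions isotypically, with the correct real/complex/quaternionic types on each piece, is precisely the Clifford--Morita periodicity that Karoubi's theorem packages. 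As written, your reduction ``to a classical (possibly twisted) Bott periodicity'' asserts the conclusion rather than proving it, so if you pursue the clutching route you need to carry out that identification explicitly; otherwise you should simply run the Karoubi argument, as the paper does.
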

\begin{proof}[Proof sketch] We indicate how to recover this
result from the much more general work of Karoubi.
First, if we endow $\lambda$ with the standard negative
definite quadratic form, then the Clifford algebra
$C\ell(\lambda)$ is $G$-equivariantly isomorphic
to $\mathbb{H}$ as an algebra.
It follows from the `fundamental theorem'
\cite[Theorem 1.1]{karoubi-eqvt} that, when $X$
is compact,
$\mathrm{KO}_G^{2+\lambda}(X)$
is given by Karoubi's (graded) 
$K$-theory of the graded Banach
category of $G$-$C\ell(2+\lambda)$-bundles,
in the sense of \cite[Definition 2.1.6]{karoubi-gen}. 
From the interpretation of
this $K$-theory group explained on \cite[p.192]{karoubi-eqvt},
we learn that a class in 
$\mathrm{KO}_G^{2+\lambda}(X)$ is specified
by 
a $G$-$C\ell(2+\lambda)$-bundle $E$ on $X$ together with two extensions
to a $G$-$C\ell(3+\lambda)$-bundle structure on $E$.
Such a triple is declared trivial
if the two extensions give isomorphic bundles, and
two triples are equivalent if they become isomorphic after adding
a trivial triple.

The naturality statement in \cite[Thm. 3.10]{karoubi-book}
produces equivariant isomorphisms:
	\begin{align*}
	C\ell(2+\lambda) &\simeq \mathrm{M}_2(\mathbb{H})\\
	C\ell(3+\lambda) &\simeq \mathrm{M}_2(\mathbb{H})
	\times \mathrm{M}_2(\mathbb{H})
	\end{align*}

By Morita invariance, we may reinterpret elements in
$\mathrm{KO}_G^{2+\lambda}(X)$ as equivalence classes of
$G$-$\mathbb{H}$-bundles $E$ equipped
with two decompositions
$\eta_1: E \simeq E_0 \oplus E_1$ and $\eta_2: E \simeq E'_0 \oplus E'_1$.
Arguing
as in \cite[Proposition 4.26]{karoubi-book} and
\cite[Proposition 2.4]{segal-K}, one can show that every such
datum $(E, \eta_1, \eta_2)$  is equivalent to one of the
form $(X \times (M_0 \oplus M_1), \mathrm{id}, \eta)$ where
$M_0$ and $M_1$ are $G$-$\mathbb{H}$-modules.
After supplying a metric, we may replace $\eta$ by the data
of a sub-$G$-$\mathbb{H}$-module of $M_0 \oplus M_1$. 
For fixed $M_0$ and $M_1$, this data is equivalent
to an equivariant map 
$X \to \coprod_{k\ge 0}\mathrm{Gr}^{\mathbb{H}}_k(M_0\oplus M_1)$.
Now the result follows by the definition of a 
complete $G$-$\mathbb{H}$-universe and the construction
of $\mathrm{BGL}(\mathbb{H})$.
\end{proof}

At this point we may form the composite
	
	\[
	\xymatrix{
	\mathbb{H}P^{\infty} \ar[rr]^-{\mathcal{O}(-1)-1}&&
	\mathbb{Z} \times \mathrm{BGL}(\mathbb{H})
	\simeq \Omega^{\infty}\Sigma^{\lambda+2}\mathrm{KO}_G
	\ar[r]^-{\eta} &
	\Omega^{\infty}\Sigma^{\lambda+1}\mathrm{KO}_G
	}
	\]
where 
	\begin{itemize}
	\item $\mathcal{O}(-1)$ is the tautological
	$G$-$\mathbb{H}$-bundle on $\mathbb{H}P^{\infty}$.
	\item $\eta \in \pi_1^GKO_G$ is the image
	of $\eta \in \pi_1S^0 \subseteq \pi_1^GS^0$.
	\end{itemize}

To complete the construction, we will need
an equivariant version of the $J$-homomorphism.
The $J$-homomorphism
and equivariant spherical fibrations have been studied previously
(e.g. \cite{segal,mcclure,waner1,waner2}) and
it is shown in \cite{cw2} and \cite{shim} that
the classifying space of equivariant stable spherical fibrations
is an equivariant infinite loop space.
For the reader's convenience, we prove this here, as well as
the corresponding notion and results regarding Picard spectra,
which provides the target for the $J$-homomorphism. 
The construction below is natural from the point
of view of \cite{barwick-et-al}, from whom we draw
inspiration.

\begin{construction}[Picard $G$-spectrum]
Bachmann-Hoyois \cite[\S 9.2]{bachmann-hoyois}
refined the Hill-Hopkins-Ravenel
norm construction to a product-preserving functor:
	\[
	\underline{\mathsf{Sp}}^G:
	\mathsf{A}^{\textit{eff}}(G) \longrightarrow \mathsf{CAlg}(\mathsf{Cat}_{\infty}),
	G/H \mapsto \mathsf{Sp}^H,
	\]
where the left-hand side denotes the (effective) Burnside (2,1)-category
of finite $G$-sets and spans \cite{barwick}. 

Given any symmetric monoidal $\infty$-category $\mathcal{C}$,
define $\mathcal{P}\mathrm{ic}(\mathcal{C})$ to be the maximal subgroupoid
of objects which are invertible under the tensor product. This
is a group-like $\mathbb{E}_{\infty}$-space and so deloops
to a spectrum $\mathrm{pic}(\mathcal{C})$; moreover
the formation
of Picard spectra 
is product preserving \cite[2.2]{akhil-vesna}.
We define $\underline{\mathrm{pic}}(S^0)$
as the composite functor
	\[
	\underline{\mathrm{pic}}(S^0): \mathsf{A}^{\textit{eff}}(G) \longrightarrow
	\mathsf{CAlg}(\mathsf{Cat}_{\infty})
	\longrightarrow \mathsf{Sp}.
	\]
This is a spectral Mackey functor, and the $\infty$-category
of spectral Mackey functors is equivalent to the 
$\infty$-category of genuine $G$-spectra \cite{guillou-may,denis}.
Thus we have produced a $G$-spectrum 
which we call the \textbf{Picard $G$-spectrum}
of $S^0$. We denote by $\mathcal{P}\mathrm{ic}(S^0)$ the
$0$th space of this spectrum, which is a group-like
$G$-$\mathbb{E}_{\infty}$-space
(by which we mean, here and below, a $G$-commutative
monoid in the sense of \cite{denis}). We note that this $G$-space,
without extra structure,
may be obtained directly from $\underline{\mathsf{Sp}}^G$
by assigning to the orbit $G/H$ the maximal
subgroupoid of the full subcategory of $\mathsf{Sp}^H$
consisting of invertible objects. If one further restricts to the
full subcategory consisting of objects equivalent to $S^0$,
this defines the $G$-space $\mathrm{BGL}_1(S^0)$. This subcategory
is closed under the formation of norms and smash products and
hence inherits the structure of a group-like $G$-$\mathbb{E}_{\infty}$-space
for which the inclusion
	\[
	\mathrm{BGL}_1(S^0) \to \mathcal{P}\mathrm{ic}(S^0)
	\]
is a map of $G$-$\mathbb{E}_{\infty}$-spaces. In particular, 
the $G$-space $\mathrm{BGL}_1(S^0)$ is the zero space of
a $G$-spectrum $\Sigma\mathrm{gl}_1(S^0)$.

More generally, given any virtual $G$-representation $V$,
restricting, for each $H$,
to the full subcategory of objects equivalent to
$S^{\mathrm{res}_H(V)}$ produces a $G$-$\mathbb{E}_{\infty}$-space
canonically equivalent to $\mathrm{BGL}_1(S^0)$. 
\end{construction}

\begin{remark} The construction above works without
any change for the category of $p$-local $G$-spectra
to define $G$-infinite loop spaces $\mathcal{P}\mathrm{ic}(S^0_{(p)})$
and $\mathrm{BGL}_1(S^0_{(p)})$. 
\end{remark}

\begin{warning} The space $\mathcal{P}\mathrm{ic}(S^0)$
does not decompose into a disjoint union of copies of
$\mathrm{BGL}_1(S^0)$ when $G$ is nontrivial.
For example, take $G=C_2$ and let $Y \subseteq \mathcal{P}\mathrm{ic}(S^0)$
be the $C_2$-space with
	\begin{itemize}
	\item underlying space the subspace of nonequivariant spectra equivalent to $S^0$,
	\item fixed points the subspace of $C_2$-spectra
	equivalent to $S^V$ where $V$ has virtual dimension zero.
	\end{itemize}
Then $Y$ splits off of $\mathcal{P}\mathrm{ic}(S^0)$, but does not decompose further.
Indeed, $S^{\sigma-1}$ and $S^0$ lie in different components of $Y^{C_2}$
but restrict to the same component on the underlying space. On the other hand,
$\mathrm{BGL}_1(S^0)$ is $G$-connected, whence the claim. 
\end{warning}

\begin{construction}[Equivariant $J$-homomorphism] 
Let $\mathbf{Vect}_G$ denote the topological category
of finite-dimensional $G$-representations. We use
the same notation for the associated $\infty$-category.
Consider the product preserving functors
	\[
	\underline{\mathbf{Vect}}_G,
	\underline{\mathsf{Spaces}}_*^G:
	\mathsf{A}^{\textit{eff}}(G) \longrightarrow 
	\mathsf{CAlg}({\mathsf{Cat}_{\infty}})
	\]
given by:
	\begin{itemize}
	\item $\underline{\mathbf{Vect}}_G(G/H):= \mathbf{Vect}_H$
	with direct sum,
	functoriality by
	restriction and coinduction;
	\item $\underline{\mathsf{Spaces}}_*^G(G/H):=
	\mathsf{Spaces}_*^H = \mathsf{Psh}(\mathcal{O}_H, \mathsf{Spaces})$
	with $\wedge$ ,
	functoriality by restriction and norm defined by
		\[
		N_H^G(X):= \mathrm{map}_H(G, X)/\{f: * \in f(G)\}.
		\]
	\end{itemize}
The assignment $V \mapsto S^V$ produces a natural transformation
	\[
	\underline{\mathbf{Vect}}_G \to \underline{\mathsf{Spaces}}_*^G
	\stackrel{\Sigma^{\infty}}{\longrightarrow}
	\underline{\mathsf{Sp}}^G.
	\]
Restricting to maximal subgroupoids, 
and noting that each $S^V$ is invertible,
we get a natural transformation
	\[
	\underline{\mathbf{Vect}}_G^{\simeq}
	\longrightarrow \mathcal{P}\mathrm{ic}(S^0)
	\]
which we may regard as a map of $G$-$\mathbb{E}_{\infty}$-spaces.
The target is group-like, so this map factors through the group-completion
of the source. One can identify the underlying space of that
group-completion with $\mathbb{Z} \times \mathrm{BO}_G$,
so we have produced a $G$-$\mathbb{E}_{\infty}$-map
	\[
	J: \mathbb{Z} \times \mathrm{BO}_G
	\longrightarrow \mathcal{P}\mathrm{ic}(S^0).
	\]
We also denote by $J$ the restriction to $\{0\} \times \mathrm{BO}_G
= \mathrm{BO}_G$ as well as any deloopings.
\end{construction}

\begin{warning} Unlike the classical case, the restriction to virtual
dimension zero representations
	\[
	\mathrm{BO}_G \to \mathcal{P}\mathrm{ic}(S^0)
	\] 
does
\emph{not} factor through $\mathrm{BGL}_1(S^0)$ when
$G$ is nontrivial. Again, an explicit example is given by
the virtual representation $\sigma - 1$ when $G = C_2$.
\end{warning}

\begin{remark} Since $\Omega^{\lambda+1}\mathbb{H}P^{\infty}
=\Omega^{\lambda}S^{\lambda+1}$ is equivariantly connected, 
the map $\mathbb{H}P^{\infty} \to \Omega^{\infty}\Sigma^{\lambda+1}
\mathrm{KO}_G$ constructed above
factors through $\mathrm{B}^{\lambda+1}\mathrm{BO}_G$. 
\end{remark}

Now we specialize to the case $G = C_{2^n}$.

\begin{proposition} Let $g$ denote the composite
	\[
	\mathbb{H}P^{\infty} \longrightarrow 
	\mathrm{B}^{\lambda+1}\mathrm{BO}_G
	\longrightarrow
	\mathrm{B}^{\lambda+1}\mathcal{P}\mathrm{ic}(S^0)
	\]
Then $\Omega^{\lambda+1}g$ factors through $\mathrm{BGL}_1(S^0)$
and is homotopic to $\mu$ under the equivalence
${\Omega^{\lambda+1}\mathbb{H}P^{\infty} \simeq
\Omega^{\lambda}S^{\lambda+1}}$.
\end{proposition}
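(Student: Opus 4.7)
The plan is to address the two claims separately. Factorization through $\mathrm{BGL}_1(S^0)$ is a connectivity consequence, while the identification with $\mu$ reduces, by a delooping argument, to the comparison of a single class in $(\pi_0^G S^0)^{\times}$, which is identified via the equivariant $J$-homomorphism.

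For the factorization, recall from the construction that $\mathrm{BGL}_1(S^0) \hookrightarrow \mathcal{P}\mathrm{ic}(S^0)$ is the inclusion of the $G$-connected component of $S^0$: for each $H \le G$, the space $\mathrm{BGL}_1(S^0)^H$ is the maximal subgroupoid of $\mathsf{Sp}^H$ on objects equivalent to $S^0$, which is precisely the path component of $S^0$ in $\mathcal{P}\mathrm{ic}(S^0)^H$. Since the source $\Omega^{\lambda+1}\mathbb{H}P^{\infty} \simeq \Omega^{\lambda}S^{\lambda+1}$ is $G$-connected---each fixed-point space $(\Omega^{\lambda}S^{\lambda+1})^H = \Omega^{\lambda^H}S^{\lambda^H+1}$ is an iterated loop space of a positive-dimensional sphere---the pointed map $\Omega^{\lambda+1}g$ factors through $\mathrm{BGL}_1(S^0)$.

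Next, write $\mu = \Omega^{\lambda}(\tilde\mu)$, where $\tilde\mu : S^{\lambda+1} \to \mathrm{B}^{\lambda}\mathrm{BGL}_1(S^0)$ classifies $1-p = -1 \in (\pi_0^G S^0)^{\times}$, and let $\tilde g : \mathbb{H}P^{\infty} \to \mathrm{B}^{\lambda+1}\mathrm{BGL}_1(S^0)$ be the factored delooping of $g$. Since $\Omega^{\lambda+1} g = \Omega^{\lambda}(\Omega\tilde g)$, it suffices to prove $\Omega\tilde g \simeq \tilde\mu$; both are $G$-maps $S^{\lambda+1} \to \mathrm{B}^{\lambda}\mathrm{BGL}_1(S^0)$, and the homotopy classes
\[
	[S^{\lambda+1}, \mathrm{B}^{\lambda}\mathrm{BGL}_1(S^0)]^G
	= \pi_{\lambda+1}^G\bigl(\Sigma^{\lambda+1}\mathrm{gl}_1(S^0)\bigr)
	= \pi_0^G \mathrm{gl}_1(S^0)
	= (\pi_0^G S^0)^{\times}.
\]
By the $\Sigma/\Omega$ adjunction the class of $\Omega\tilde g$ coincides with that of the restriction $\tilde g|_{\mathbb{H}P^1} : S^{\lambda+2} \to \mathrm{B}^{\lambda+1}\mathrm{BGL}_1(S^0)$, and by construction this restriction equals $J_*\bigl(\eta \cdot (\mathcal{O}(-1)-1)|_{\mathbb{H}P^1}\bigr)$.

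Finally, one identifies this as $-1$ in two steps. First, under the Karoubi identification, the class $(\mathcal{O}(-1)-1)|_{\mathbb{H}P^1} \in \pi_0^G\mathrm{ko}_G$ is the multiplicative unit: by naturality in $G$ this reduces to the classical statement that the reduced tautological quaternionic line bundle generates $\widetilde{\mathrm{KH}}(\mathbb{H}P^1) = \widetilde{\mathrm{KH}}(S^4) \cong \mathbb{Z}$. Second, $J(\eta) = -1 \in (\pi_0^G S^0)^{\times}$: by naturality of the $J$-homomorphism in $G$, this reduces to the classical statement that the M\"obius bundle over $S^1$ represents the nontrivial unit. The step I expect to be the main obstacle is the verification that Karoubi's equivariant identification sends $(\mathcal{O}(-1)-1)|_{\mathbb{H}P^1}$ to the unit, since $\pi_0^G\mathrm{ko}_G$ accommodates representation-theoretic corrections beyond the underlying rank; once this is verified, the remaining facts are essentially formal from naturality in $G$.
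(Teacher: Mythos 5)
Your reduction is sound up to the last step: the factorization through $\mathrm{BGL}_1(S^0)$ by $G$-connectedness, and the reduction via the counit $\Sigma\Omega\mathbb{H}P^{\infty}\to\mathbb{H}P^{\infty}$ to identifying the single class of $S^{\lambda+2}\simeq\mathbb{H}P^1\to\mathrm{B}^{\lambda+1}\mathcal{P}\mathrm{ic}(S^0)$, are both exactly what the paper does. The gap is the one you flag yourself: you need $(\mathcal{O}(-1)-1)|_{\mathbb{H}P^1}$ to be the unit $1\in\pi_0^G\mathrm{KO}_G=RO(G)$, and ``naturality in $G$'' only pins this class down modulo the kernel of $RO(G)\to RO(e)=\mathbb{Z}$, which is large. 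Nor can you finish by checking underlying classes at the end, since $(\pi_0^GS^0)^{\times}=A(G)^{\times}$ is strictly bigger than $\{\pm1\}$ for nontrivial $G$ (already of order $4$ for $C_2$); the representation-theoretic ambiguity genuinely matters and is left unresolved.

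The paper's proof sidesteps exactly this point. Instead of factoring the class as $J_*(\eta\cdot u)$ with $u\in RO(G)$ and identifying $u$, it identifies the class of $S^{\lambda+2}\to\mathrm{B}^{\lambda+1}\mathrm{BO}_G$ directly as an element of $\mathrm{KO}_G^{-1}(\mathrm{pt})$, which by Atiyah--Segal is $RO(G)/R(G)$: equal to $\mathbb{F}_2$ for $n>1$ and $\mathbb{F}_2\{1,\sigma\}$ for $n=1$. In other words, the multiplication by $\eta$ has already killed the ambiguity you are worried about. In this tiny group the class is forced to be the nonzero element represented by the M\"obius bundle --- whose $J$-image is $-1=1-p$ --- because its underlying class is nonzero, together with the observation that for $n=1$ the bundle is restricted from $C_4$ and so cannot involve $\sigma$. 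To repair your argument, either trace through Karoubi's equivalence to establish the $RO(G)$-identification honestly, or (more efficiently) pass to $RO(G)/R(G)$ before attempting any identification.
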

\begin{proof} Since $\Omega^{\lambda+1}\mathbb{H}P^{\infty}$
is equivariantly connected, $\Omega^{\lambda+1}g$
automatically factors through
$\mathrm{BGL}_1(S^0)$. To complete the proof, we need only
identify the map
	\[
	S^{\lambda+2} \to \mathbb{H}P^{\infty}
	\to \mathrm{B}^{\lambda+1}\mathcal{P}\mathrm{ic}(S^0).
	\]
To begin, notice that the map
	\[
	S^{\lambda+2} \to \mathrm{B}^{\lambda+1}\mathrm{BO}_G
	\]
corresponds to an element of $\mathrm{KO}_G^{-1}$. By
\cite[p.17]{atiyah-segal}, this group is $RO(G)/R(G)$.
For $G = C_{2^n}$ we have
	\[
	RO(G)/R(G) =\mathbb{F}_2\{1, \sigma\}.
	\]
The bundle we started with was restricted
from a bundle defined for $C_{2^{n+1}}$,
which excludes $\sigma$, so the element in question
is either 0 or 1. But we know that the
\emph{underlying} class is nonzero, so we must be looking
at the element $1$ in $RO(G)/R(G)$. Moreover, this
class corresponds precisely to the M\"obius bundle on $S^1$,
whence the claim.
\end{proof}

\begin{proof}[Proof of Theorem \ref{thm:main-even}
assuming Theorem \ref{thm:MainThom}]
Combine the above proposition with Theorem \ref{thm:MainThom}.
\end{proof}

\section{An equivariant \texorpdfstring{$H$}{H}-space orientation}\label{sec:exotic}

The purpose of this section is to prove the following theorem.

\begin{theorem}\label{thm:h-orient} There is an $\mathbb{A}_2$-structure on
$S^{\lambda+1}_{(p)}$ such that
	\begin{enumerate}[\normalfont (i)]
	\item The adjoint to $1-p \in \pi_0(S^0_{(p)})^{\times}$ refines to
	an $\mathbb{A}_2$-map
		\[
	S_{(p)}^{\lambda+1} 
	\to \mathrm{B}^{\lambda+1}\mathrm{GL}_1(S^0_{(p)}).
		\]
	\item The composite
		\[
		S^{\lambda+1}_{(p)} \to \mathrm{B}^{\lambda+1}
		\mathrm{GL}_1(S^0_{(p)}) \to 
		\mathrm{B}^{\lambda+1}
		\mathrm{GL}_1(\mathrm{H}\underline{\mathbb{F}}_p)
		\]
	is nullhomotopic through $\mathbb{A}_2$-maps.
	\end{enumerate}
\end{theorem}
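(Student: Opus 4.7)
The plan is to prove Theorem \ref{thm:h-orient} by twisting the canonical $\mathbb{A}_2$-structure on $S^{\lambda+1}_{(p)}$, using a standard obstruction-theoretic argument combined with the equivariant EHP sequence.

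\textbf{Baseline structure.} Under the identification $\mathbb{H} \cong 2+\lambda$ of $G$-representations from Construction \ref{cstr:quat-action}, we have $S^{\lambda+1} \simeq S(\mathbb{H})$ as $G$-spaces, and the unit sphere inherits an honest $G$-equivariant $\mathbb{A}_\infty$-structure from the quaternion algebra. Let $m_0$ be the resulting $\mathbb{A}_2$-structure. At odd primes, the adjoint $f$ of $1-p$ fails to be $\mathbb{A}_2$ with respect to $m_0$, so we must twist.

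\textbf{Obstruction setup.} The set of $\mathbb{A}_2$-structures on $S^{\lambda+1}_{(p)}$ extending the fold on $S^{\lambda+1}_{(p)} \vee S^{\lambda+1}_{(p)}$ is a torsor over
$$[S^{\lambda+1}_{(p)} \wedge S^{\lambda+1}_{(p)}, \; S^{\lambda+1}_{(p)}]_G \;=\; \pi_{2\lambda+2}^{G}(S^{\lambda+1})_{(p)},$$
via the cofibration $S^{\lambda+1}\vee S^{\lambda+1} \to S^{\lambda+1}\times S^{\lambda+1} \to S^{2\lambda+2}$. For a fixed structure $m$, the primary obstruction to $f: S^{\lambda+1}_{(p)} \to \mathrm{B}^{\lambda+1}\mathrm{GL}_1(S^0_{(p)})$ being an $\mathbb{A}_2$-map is a class
$$\theta(f,m) \;\in\; \pi_{2\lambda+2}^{G}\bigl(\mathrm{B}^{\lambda+1}\mathrm{GL}_1(S^0_{(p)})\bigr) \;\cong\; \pi_{\lambda+1}^{G}(S^0)_{(p)},$$
using that the target is a $(\lambda+1)$-fold delooping. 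Twisting $m$ by $a$ changes $\theta(f,m)$ by $f_*(a)$; since the target is a loop space and $f$ classifies the unit $1-p$, the image of $f_*$ coincides with that of the stabilisation $E^\infty: \pi_{2\lambda+2}^{G}(S^{\lambda+1})_{(p)} \to \pi_{\lambda+1}^{G}(S^0)_{(p)}$.

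\textbf{EHP surjectivity.} The central step is to verify that $E^\infty$ is surjective. For this I would invoke the equivariant EHP fibration
$$S^{\lambda+1} \longrightarrow \Omega S^{\lambda+2} \longrightarrow \Omega S^{2\lambda+3},$$
which realises the cokernel of $E^\infty$ as a subgroup of $\pi_{2\lambda+3}^{G}(S^{2\lambda+3})_{(p)}$ via the equivariant Hopf invariant. Reducing by isotropy separation, one obtains the underlying case (where $\pi_6(S^3)_{(p)} \twoheadrightarrow \pi_3^s(S^0)_{(p)}$ hits $\alpha_1$ at $p=3$ and is trivially surjective for $p \geq 5$) and the nontrivial fixed-point case (where $S^{\lambda+1}$ restricts to $S^1$ and the suspension is trivially an isomorphism). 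These assemble to equivariant surjectivity of $E^\infty$; picking $a$ with $f_*(a) = -\theta(f, m_0)$ and setting $m := m_0 + a$ proves (i).

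\textbf{Part (ii) and main obstacle.} For (ii), the composite $g: S^{\lambda+1}_{(p)} \to \mathrm{B}^{\lambda+1}\mathrm{GL}_1(\mathrm{H}\underline{\mathbb{F}}_p)$ is automatically $\mathbb{A}_2$ with the twisted structure and represents $1-p \equiv 1 \in \pi_0^{G}(\mathrm{H}\underline{\mathbb{F}}_p)^\times$, hence is null as a map of $G$-spaces. The residual obstruction to upgrading the nullhomotopy to an $\mathbb{A}_2$-nullhomotopy lies in $\pi_{2\lambda+3}^{G}(\mathrm{B}^{\lambda+1}\mathrm{GL}_1(\mathrm{H}\underline{\mathbb{F}}_p)) \cong \pi_{\lambda+2}^{G}(\mathrm{H}\underline{\mathbb{F}}_p)$, and is killed by exploiting the residual freedom in the choice of $a$ (a torsor over $\ker E^\infty$) together with the freedom in the choice of coherence homotopy of $f$ (a torsor over $\pi_{\lambda+2}^{G}(S^0)_{(p)}$, which surjects onto the obstruction group via the unit map). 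The principal obstacle is the equivariant surjectivity of $E^\infty$ for $G = C_{p^n}$, $p$ odd: one must honestly compute the metastable Hopf invariant and check that its image is absorbed on each isotropy stratum; a secondary difficulty is bookkeeping the $\mathbb{A}_2$-coherences in (ii) closely enough to exhibit an explicit witness killing the residual obstruction.
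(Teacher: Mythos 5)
Your overall architecture is the same as the paper's: start from the equivariant $\mathbb{A}_\infty$-structure on $S^{\lambda+1}$ (quaternionic multiplication, equivalently $\Omega\mathbb{H}P^\infty$), twist the induced $\mathbb{A}_2$-structure by a class in $[S^{\lambda+1}\wedge S^{\lambda+1},S^{\lambda+1}]^G$ so that the deviation of $1-p$ from being an $H$-map dies, and reduce everything to a stable desuspension statement about the attaching map $\nu\colon S^{2\lambda+3}\to S^{\lambda+2}$. For part (ii) you make heavier weather than necessary: the obstruction group you identify, $\pi_{\lambda+2}^{G}\mathrm{H}\underline{\mathbb{F}}_p$, is simply zero because $S^{\lambda+2}$ is simply connected, so no juggling of ``residual freedom'' is needed.

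The genuine gap is in the step you yourself single out as the principal obstacle. What you need is that the stable class of $\nu$ lies in $E^\infty\bigl(\pi_{2\lambda+2}^{G}S^{\lambda+1}\bigr)$, and you propose to deduce this from surjectivity of $E^\infty$, ``assembled by isotropy separation'' from the underlying case and the fixed-point case. That assembly is invalid: the target $\pi_{\lambda+1}^{G,s}(S^0)_{(p)}$ is a representation-graded stable homotopy group, and the free cells of $S^{\lambda}$ contribute transfer summands invisible to both the underlying space and the fixed-point spaces. Concretely, for $G=C_p$ with $p\ge 5$, running $[{-},S^0]^{G}$ over the cell filtration $S^1\to S^{\lambda/2+1}\to S^{\lambda+1}$ gives $\pi_{\lambda+1}^{C_p,s}(S^0)_{(p)}\cong \pi_1^{C_p,s}(S^0)_{(p)}\cong \pi_1^s(BC_{p+})_{(p)}\cong\mathbb{Z}/p$, whereas the two groups your strata see, $\pi_3^s(S^0)_{(p)}$ and $\pi_1^s(S^0)_{(p)}$, both vanish. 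So your checks cannot control the part of the group where the obstruction could live. (Relatedly, the EHP fibration computes the cokernel of a single suspension $E$, not of $E^\infty$.) The paper closes exactly this gap by a different computation: it proves $2\nu\equiv[\iota_{\lambda+2},\iota_{\lambda+2}]\bmod E(\pi_{2\lambda+2}^{G}S^{\lambda+1})$, using the equivariant James splitting to get the EHP exact sequence and the injectivity of the fixed-point degree homomorphism on $\pi_{2\lambda+3}^{G}S^{2\lambda+3}$ to verify the Hopf-invariant identity $H([\iota,\iota])=2H(\nu)$ stratum by stratum (a legitimate reduction, unlike yours, because it is backed by that injectivity statement); since suspended Whitehead products vanish by Eckmann--Hilton, $2\nu$, hence $\nu$ after inverting $2$, desuspends modulo $\ker(E)$. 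You would need to supply an argument of this kind, or otherwise show that your specific obstruction class lies in $\mathrm{im}(E^\infty)$, for the proposal to go through.
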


Before doing so, we record a corollary which follows from
\cite[X.6.4]{LMS} and the previous theorem:

\begin{corollary} The Thom class
	\[
	\left(\Omega^{\lambda}S^{\lambda+1}_{(p)}\right)^{\mu}
	\longrightarrow
	\mathrm{H}\underline{\mathbb{F}}_p
	\]
has the structure of a map of $\mathbb{A}_2$-algebras in
$\mathbb{E}_{\lambda}$-algebras.
\end{corollary}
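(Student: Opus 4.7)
The plan is to combine Theorem \ref{thm:h-orient} with the symmetric monoidality of the Thom spectrum functor, in the form proved by Lewis \cite[X.6.4]{LMS}: for a $G$-$\mathbb{E}_{\infty}$-ring $R$, the Thom spectrum functor
\[
\mathrm{Th}: \bigl(\mathsf{Spaces}^G\bigr)_{/\mathrm{BGL}_1(R)} \longrightarrow \mathrm{Mod}_R
\]
is symmetric monoidal, so it carries $\mathbb{A}_2$-algebras in $\mathbb{E}_{\lambda}$-algebras of its source to $\mathbb{A}_2$-algebras in $\mathbb{E}_{\lambda}$-algebras of $R$-modules, and sends null-homotopies respecting that operadic structure to equivalences respecting it.

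First, I would apply $\Omega^{\lambda}$ to Theorem \ref{thm:h-orient}(i) to promote
\[
\mu: \Omega^{\lambda}S^{\lambda+1}_{(p)} \longrightarrow \mathrm{BGL}_1(S^0_{(p)})
\]
to an $\mathbb{A}_2$-map in $\mathbb{E}_{\lambda}$-spaces; applying $\mathrm{Th}$ then equips $\bigl(\Omega^{\lambda}S^{\lambda+1}_{(p)}\bigr)^{\mu}$ with the desired $\mathbb{A}_2$-in-$\mathbb{E}_{\lambda}$-algebra structure. Next, applying $\Omega^{\lambda}$ to Theorem \ref{thm:h-orient}(ii) supplies a null-homotopy through $\mathbb{A}_2$-maps in $\mathbb{E}_{\lambda}$-spaces of the post-composed map $\Omega^{\lambda}S^{\lambda+1}_{(p)} \to \mathrm{BGL}_1(\mathrm{H}\underline{\mathbb{F}}_p)$. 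Feeding this into $\mathrm{Th}$ trivializes the $\mathrm{H}\underline{\mathbb{F}}_p$-base-change of our Thom spectrum, yielding an equivalence
\[
\bigl(\Omega^{\lambda}S^{\lambda+1}_{(p)}\bigr)^{\mu} \wedge \mathrm{H}\underline{\mathbb{F}}_p \simeq \Sigma^{\infty}_{+}\Omega^{\lambda}S^{\lambda+1}_{(p)} \wedge \mathrm{H}\underline{\mathbb{F}}_p
\]
of $\mathbb{A}_2$-in-$\mathbb{E}_{\lambda}$-algebras under $\mathrm{H}\underline{\mathbb{F}}_p$. The Thom class is then the composite of the base-change unit into $(-)\wedge \mathrm{H}\underline{\mathbb{F}}_p$, this equivalence, and the augmentation induced by the terminal map $\Omega^{\lambda}S^{\lambda+1}_{(p)} \to *$. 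Each factor respects the required multiplicative structure: the unit because it is induced by the $\mathbb{E}_{\infty}$-ring map $S^0_{(p)} \to \mathrm{H}\underline{\mathbb{F}}_p$; the equivalence by construction; and the augmentation because a terminal map of pointed $\mathbb{A}_2$-in-$\mathbb{E}_{\lambda}$-spaces induces, via $\Sigma^{\infty}_{+}$, an $\mathbb{A}_2$-in-$\mathbb{E}_{\lambda}$ map of ring spectra.

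The entire argument is formal once Theorem \ref{thm:h-orient} is in hand; the substantive work occurs there (particularly in producing the exotic $\mathbb{A}_2$-structure at odd primes). The only point requiring care is the propagation of the $\mathbb{A}_2$-in-$\mathbb{E}_{\lambda}$ refinement through the $G$-equivariant Thom construction, which is exactly what \cite[X.6.4]{LMS} delivers, so I anticipate no independent obstacle.
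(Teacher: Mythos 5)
Your proposal is correct and matches the paper's argument: the corollary is stated there as an immediate consequence of Theorem \ref{thm:h-orient} together with the multiplicativity of the equivariant Thom construction from \cite[X.6.4]{LMS}, which is exactly the route you take. Your write-up merely spells out the standard mechanism (apply $\Omega^{\lambda}$, push through the Thom functor, and use the structured null-homotopy from part (ii) to produce the orientation), which the paper leaves implicit.
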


\begin{remark}\label{rmk:a2-str} The $\mathbb{A}_2$-structure we define is necessarily
$p$-local. We only use this structure to produce a multiplication
on the homology of $\left(\Omega^{\lambda}S^{\lambda+1}_{(p)}\right)^{\mu}$
which is preserved by the map induced by the Thom class. The
localization map
	\[
	\Omega^{\lambda}S^{\lambda+1}\longrightarrow
	\Omega^{\lambda}S^{\lambda+1}_{(p)}
	\]
induces an isomorphism on mod $p$ homology and using this
one can show that the map
	\[
	\left(\Omega^{\lambda}S^{\lambda+1}\right)^{\mu}
	\longrightarrow
	\left(\Omega^{\lambda}S^{\lambda+1}_{(p)}\right)^{\mu}
	\]
is a $p$-local equivalence. On the other hand, 
the left hand side is automatically $p$-local, being
an (equivariant)
homotopy colimit of $p$-local spectra by construction.
We can transport
the $\mathbb{A}_2$-structure from the target to the source
and get a map
	\[
	\left(\Omega^{\lambda}S^{\lambda+1}\right)^{\mu} \to 
	\mathrm{H}\underline{\mathbb{F}}_p
	\] 
of $\mathbb{A}_2$-algebras
in $\mathbb{E}_{\lambda}$, even though the
$\mathbb{A}_2$-structure does not arise from applying
the Thom construction to an $\mathbb{A}_2$-map of $G$-spaces.
\end{remark}

We will need to recall a few facts about $\mathbb{A}_2$-monoid
objects, mainly to establish notation.

\begin{definition} Let $\mathcal{C}$
be an $\infty$-category which admits products.
For $0\le k \le \infty$,
an \textbf{$\mathbb{A}_k$-monoid} $X$ in $\mathcal{C}$
is a truncated simplicial object
	\[
	\mathbf{B}_{\le k}X:
	\Delta_{\le k}^{op} \longrightarrow \mathcal{C}
	\]
such that
	\begin{itemize}
	\item The object $(\mathbf{B}_{\le k}X)_0$ is final.
	\item If $k\ge 1$, then for $1\le j \le k$, the maps
		\[
		(\mathbf{B}_{\le k}X)_j \to (\mathbf{B}_{\le k}X)_1,
		\]
	induced by $\{i, i+1\} \to [j]$, exhibit the source
	as the $j$-fold product of $(\mathbf{B}_{\le k}X)_1$.
	\end{itemize}
In this case we denote $(\mathbf{B}_{\le k}X)_1$ by $X$.
If $\mathcal{C}$ admits
homotopy colimits, we denote the homotopy colimit of the diagram
$\mathbf{B}_{\le k}X$ by $\mathrm{B}_{\le k}X$.

The $\infty$-category of $\mathbb{A}_k$-monoids,
$\mathsf{Mon}_{\mathbb{A}_k}(\mathcal{C})$, is
the full subcategory of $\mathsf{Fun}(\Delta_{\le k}^{op}, \mathcal{C})$
spanned by the $\mathbb{A}_k$-monoids. Note that
restriction defines forgetful functors
	\[
	\mathsf{Mon}_{\mathbb{A}_k}(\mathcal{C})
	\to \mathsf{Mon}_{\mathbb{A}_j}(\mathcal{C})
	\]
for $j\le k$, and we have natural maps
	\[
	\mathrm{B}_{\le j}X \longrightarrow \mathrm{B}_{\le k}X.
	\]
\end{definition}

\begin{example}[$k=0$] An $\mathbb{A}_0$-monoid is a final
object of $\mathcal{C}$. The natural maps above provide
each $\mathrm{B}_{\le j}X$ with a basepoint.
\end{example}

\begin{example}[$k=1$] An $\mathbb{A}_1$-monoid in $\mathcal{C}$
is specified by the data of an object $X$ and a map
$\ast \to X$, where $\ast$ is a final object in $\mathcal{C}$.
The object $\mathrm{B}_{\le 1}X$ is computed as the colimit of
	\[
	\xymatrix{
	X \ar@<-.5ex>[r]\ar@<.5ex>[r] & \ast\ar[l]
	}
	\]
which is the suspension $\Sigma X$. 
\end{example}

\begin{example}[$k=2$] Suppose $\mathcal{C}$ admits
limits and colimits.
By \cite[A.2.9.16]{HTT}, extending
a diagram $\mathbf{B}_{\le 1}X$
to a diagram $\mathbf{B}_{\le 2}X$
is equivalent to specifying a factorization 
	\[
	\xymatrix{
	&C\ar[dr]^{(d_0, d_1, d_2)}&\\
	X \vee X  \ar[rr]
	\ar[ur] && X \times X \times X
	}
	\]
where $C \in \mathcal{C}$ is some object and 
$X \vee X$ denotes the pushout $X \amalg_{\ast} X$
where $\ast = (\mathbf{B}_{\le 1}X)_0$ is a final object.
In order for this extended diagram to define an $\mathbb{A}_{2}$-monoid,
the maps $d_0$ and $d_2$ must give an equivalence
$C \simeq X \times X$. Under this equivalence, the map
$X \vee X \to X \times X$ is the standard one. The only additional
data is the map $d_1: X \times X \simeq C \to X$.
In summary: an $\mathbb{A}_2$-monoid in $\mathcal{C}$
is precisely the data of a pointed object $X$
together with a map $m: X \times X \to X$ which extends
the fold map $X \vee X \to X$. It follows that the cofiber of
$\mathrm{B}_{\le 1}X \to \mathrm{B}_{\le 2}X$ is
given by $\Sigma^2(X \wedge X)$. 
\end{example}

\begin{remark} If we endow $\mathcal{C}$ with the Cartesian monoidal
structure, then we see that an $\mathbb{A}_2$-monoid object in
$\mathcal{C}$ is the same data as an $\mathbb{A}_2$-algebra object
as in Definition \ref{dfn:a2}.
\end{remark}

\begin{example}[Loop spaces]\label{ex:loops} If $\mathcal{C} = \mathsf{Spaces}^G$
and $Y$ is a pointed $G$-space, then $\Omega Y$ has
a natural $\mathbb{A}_{\infty}$-structure.
\end{example}

Now we will restrict attention to the $\infty$-category
$\mathsf{Spaces}^{G}$ of $G$-spaces.
The following is proved just as in the classical case,
for which there are many references. The earliest
appears to be \cite[Proposition 3.5]{stash-ab}.

\begin{lemma} If $X$ is
an $\mathbb{A}_2$-algebra, then
the map $X \to \Omega \mathrm{B}_{\le 2}X$
adjoint to
	\[
	\Sigma X = \mathrm{B}_{\le 1}X
	\to \mathrm{B}_{\le 2} X
	\]
extends to an $\mathbb{A}_2$-map.
\end{lemma}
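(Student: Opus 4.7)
The plan is to upgrade the given pointed map $f: X \to \Omega \mathrm{B}_{\le 2} X$ (the adjoint of $\mathrm{B}_{\le 1} X = \Sigma X \to \mathrm{B}_{\le 2} X$) to a morphism of truncated simplicial diagrams $\tilde f: \mathbf{B}_{\le 2} X \to \mathbf{B}_{\le 2}(\Omega \mathrm{B}_{\le 2} X)$; by the definition of the $\infty$-category $\mathsf{Mon}_{\mathbb{A}_2}(\mathcal{C})$, such a natural transformation is precisely the data of an $\mathbb{A}_2$-refinement of $f$. Since $\Omega \mathrm{B}_{\le 2} X$ is an $\mathbb{A}_\infty$-monoid by Example \ref{ex:loops}, its truncated bar construction is a Segal object, so $\mathbf{B}_{\le 2}(\Omega \mathrm{B}_{\le 2} X)_j = (\Omega \mathrm{B}_{\le 2} X)^{\times j}$ for $j \le 2$. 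Consequently the data of $\tilde f$ reduces to the level-$1$ map $f$, the level-$2$ map $f \times f$ (forced by $d_0$, $d_2$ compatibility), together with the datum of a $d_1$-compatibility: a homotopy between $f \circ m_X$ and $\mu_\Omega \circ (f \times f)$ as maps $X \times X \to \Omega \mathrm{B}_{\le 2} X$, where $\mu_\Omega$ denotes loop concatenation.

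To produce this homotopy, I would exploit the colimit presentation $\mathrm{B}_{\le 2} X = \mathrm{hocolim}_{\Delta_{\le 2}^{op}} \mathbf{B}_{\le 2} X$. The level-$2$ vertex $X \times X$ of the diagram comes with a canonical map to $\mathrm{B}_{\le 2} X$, and the simplicial face maps exhibit this as a $2$-cell whose two outer boundary $1$-cells are the images under $f$ of the projections, while the inner boundary $1$-cell is $f \circ m_X$. Adjointly, this $2$-cell produces exactly the required homotopy in $\mathrm{Map}(X \times X, \Omega \mathrm{B}_{\le 2} X)$. More abstractly, one can package the whole construction by identifying $\mathrm{B}_{\le 2}$ as the left adjoint of the composite $\mathcal{C}_* \xrightarrow{\Omega} \mathsf{Mon}_{\mathbb{A}_\infty}(\mathcal{C}) \to \mathsf{Mon}_{\mathbb{A}_2}(\mathcal{C})$; the desired $\mathbb{A}_2$-refinement of $f$ then arises as the unit of this adjunction.

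The main obstacle is verifying that loop concatenation $\mu_\Omega$, produced from the $\mathbb{A}_\infty$-structure on the loop space of a pointed object, agrees (up to canonical homotopy) with the ``combinatorial'' concatenation obtained from the Segal structure of $\mathbf{B}_{\le 2}(\Omega \mathrm{B}_{\le 2} X)$. This coherence is essentially the content of the classical Stasheff argument \cite[Proposition 3.5]{stash-ab}, and at its core reduces to a compatibility between the pinch map $S^1 \to S^1 \vee S^1$ and the face maps of the universal $2$-simplex. In the $\infty$-categorical setting, this check is subsumed by the adjunction formulation above, which can in turn be established using the universal property of $\mathrm{B}_{\le 2}$ as a homotopy colimit together with the Segal characterization of $\mathsf{Mon}_{\mathbb{A}_2}(\mathcal{C})$.
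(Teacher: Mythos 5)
Your argument is correct and is essentially the same one the paper has in mind: the paper gives no proof of this lemma, deferring to the classical case (Stasheff's Proposition 3.5 in the cited reference), and your construction --- using the $2$-simplices of $\mathrm{B}_{\le 2}X$ coming from the level-$2$ term $X\times X$ to supply the homotopy $f\circ m_X \simeq \mu_\Omega\circ(f\times f)$, with the Segal condition on $\mathbf{B}_{\le 2}(\Omega\mathrm{B}_{\le 2}X)$ reducing the required data to exactly that homotopy --- is precisely that classical argument rendered in the $\infty$-categorical setup of the paper. The coherence point you flag (matching loop concatenation with the combinatorial concatenation from the face maps) is indeed the only real content, and your adjunction packaging is a legitimate way to dispose of it.
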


We now return to the case of interest. We begin by establishing
the existence of the $\mathbb{A}_2$-structure we need on
$S^{\lambda+1}$.

\begin{proposition}\label{prop:exotic-h} For $p$ an odd prime,
there is an 
$\mathbb{A}_2$-structure on $S_{(p)}^{\lambda+1}$
with the property that the map $\Sigma X \to \mathrm{B}_{\le 2}X$
stably splits.
\end{proposition}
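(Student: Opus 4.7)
By the $k=2$ example, an $\mathbb{A}_2$-structure on $X = S^{\lambda+1}_{(p)}$ is the data of a multiplication $m\colon X \times X \to X$ extending the fold $\nabla\colon X \vee X \to X$, and once nonempty this set is a torsor over $[X \wedge X, X]^G_{\ast}$. The connecting map in the cofiber sequence $\Sigma X \to \mathrm{B}_{\le 2}X \to \Sigma^2(X \wedge X)$ is the suspension of the Hopf construction $H(m)\colon \Sigma(X \wedge X) \to \Sigma X$, so the stable splitting demanded in the proposition is equivalent to $H(m)$ being stably null. Modifying $m$ by $\delta\colon X \wedge X \to X$ alters $H(m)$ by $\Sigma\delta$, so it suffices to produce a base structure $m_0$ together with $\delta$ satisfying $H(m_0) + \Sigma\delta = 0$ stably.

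\textbf{Base structure and identification of $H(m_0)$.} I would take $m_0$ to be the $p$-local loop multiplication on $S^{\lambda+1}_{(p)} \simeq \Omega\mathbb{H}P^{\infty}_{(p)}$ coming from Section~\ref{sec:action}. For a loop space on a simply connected base, the bar filtration at level $2$ computes the $2$-skeleton of the classifying space, so $\mathrm{B}_{\le 2}X \simeq \mathbb{H}P^2_{(p)}$ and $H(m_0)$ is precisely the equivariant Hopf map $\nu\colon S^{2\lambda+3}\to S^{\lambda+2}$ attaching the second equivariant cell of $\mathbb{H}P^{\infty}$. By Remark~\ref{rmk:nu-fixed-eta}, on geometric fixed points for any nontrivial subgroup $H \le G$ this map restricts to $\eta\colon S^3 \to S^2$, while on the underlying space it is the classical $\nu$.

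\textbf{Killing the stable class at odd $p$.} Since $G = C_{p^n}$ is a $p$-group, a stable $G$-map between $p$-local representation spheres vanishes iff all its geometric fixed points vanish. For $p$ odd, $\eta$ is stably trivial, so the geometric-fixed-point contributions from proper subgroups automatically vanish, and the only remaining concern is the underlying class $\nu \in \pi_3^s \cong \mathbb{Z}/24$. For $p \ge 5$ this is already zero, so $m_0$ itself realizes the splitting. For $p = 3$, the suspension map $\pi_6(S^3)_{(3)} \to (\pi_3^s)_{(3)}$ is the isomorphism $\mathbb{Z}/3 \xrightarrow{\sim} \mathbb{Z}/3$, so there is an unstable class whose stabilization equals $-\nu$. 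I would extend it to an equivariant $\delta\colon S^{2\lambda+2}\to S^{\lambda+1}_{(3)}$, using that the fixed-point restriction $\delta^G\colon S^2 \to S^1$ is forced to be null since $\pi_2 S^1 = 0$. Setting $m := m_0 + \delta$ then produces an $\mathbb{A}_2$-structure with $H(m)$ stably null, and the cofiber sequence splits stably.

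\textbf{Main obstacle.} The delicate step is producing the equivariant $\delta$ from its nonequivariant shadow. This is the content of the ``small dose of unstable equivariant homotopy theory'' mentioned in the outline: one inductively extends along the isotropy filtration of $S^{2\lambda+2}$ (e.g.\ via the cofiber sequences $G_+ \wedge S^{n-1} \to S^{\lambda/2} \to S^{\lambda}$ and their iterates used in Section~\ref{sec:action}), at each stage checking that the obstruction lives in a homotopy group of spheres that vanishes $p$-locally at odd $p$. An equivariant EHP-sequence analysis packages this: the only obstructions to lifting stable classes under suspension on each fixed-point stratum are $p$-locally trivial at odd primes, because both $\eta$ and $\nu$ are either already null or lie in the image of EHP-suspension at odd primes.
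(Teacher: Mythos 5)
Your setup agrees with the paper's: you take the loop multiplication on $S^{\lambda+1}_{(p)}\simeq \Omega\mathbb{H}P^{\infty}_{(p)}$ as the base $\mathbb{A}_2$-structure, identify the attaching map of $\mathrm{B}_{\le 2}$ with $\nu\colon S^{2\lambda+3}\to S^{\lambda+2}$, and observe that modifying the structure by $\delta\in[X\wedge X,X]$ changes the attaching map by $E(\delta)$. The divergence — and the gap — is in how you produce a $\delta$ with $\nu+E(\delta)$ stably null. Your reduction rests on the claim that a stable $G$-map between $p$-local representation spheres is null iff all of its geometric fixed points are null. This is false: geometric fixed points detect \emph{equivalences} of $G$-spectra (equivalently, contractibility of the cofiber), not \emph{nullity} of a map. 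Concretely, for $G=C_p$ with $p$ odd, the tom Dieck splitting gives $\pi_1^{C_p}(S^0)_{(p)}\cong \widetilde{\pi}_1(\Sigma^{\infty}BC_p)_{(p)}\cong\mathbb{Z}/p\neq 0$, yet every element there has trivial underlying class and trivial geometric fixed points (since $\pi_1(S^0)_{(p)}=0$). So checking that the underlying class of $\nu$ dies and that $\nu^{\Phi H}=\eta$ dies at odd primes does not show the equivariant stable class of $\nu$ dies for $p\ge 5$, nor does it let you conclude at $p=3$ after adding your $E(\delta)$. Your second step — extending a nonequivariant desuspension of $-\alpha_1$ to an equivariant $\delta\colon S^{2\lambda+2}\to S^{\lambda+1}_{(3)}$ by induction over isotropy — is also only sketched, and it is exactly the hard unstable input; moreover even granting such a $\delta$, you would still face the detection problem above when verifying $\nu+E(\delta)=0$.

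The paper's route avoids both issues. It proves (Lemma \ref{lem:bracket}) that $2\nu\equiv[\iota_{\lambda+2},\iota_{\lambda+2}]$ modulo the image of $E$, by comparing Hopf invariants; this comparison is legitimately checked on underlying spaces and fixed points because the James/EHP fiber sequence is a statement about $G$-\emph{spaces} (where fixed-point detection is valid) and because the target $\pi_{2\lambda+3}S^{2\lambda+3}$ injects into fixed-point degrees by the equivariant Hopf degree theorem — a special fact about the zero stem that does not generalize to the degree where $\nu$ lives. Since Whitehead products die under one suspension, $E(2\nu)\in E^2(\pi_{2\lambda+2}S^{\lambda+1})$; inverting $2$ produces the desuspended class $x$ by \emph{exactness} of the EHP sequence rather than by cell-by-cell obstruction theory, and modifying the $\mathbb{A}_2$-structure by $x$ makes the suspended attaching map null on the nose. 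To repair your argument you would need either to justify a detection statement for the specific $RO(G)$-graded group $\pi^G_{\lambda+1}(S^0)_{(p)}$ (which would require an isotropy-separation computation you have not done), or to switch to the Whitehead-product argument.
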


We will deduce this proposition from the following calculation,
which is an equivariant version of a classical result
(see, e.g. \cite{james}). Here we use
	\[
	E: [X,Y] \to [\Sigma X, \Sigma Y]
	\]
to denote the homomorphism on
equivariant homotopy classes of maps
given by suspending by
$S^1$ equipped with the trivial $G$-action.

\begin{proposition}\label{prop:dbl-susp}
Denote by $\nu: S^{2\lambda+3} \to S^{\lambda+2}$
the attaching map for the inclusion
$S^{\lambda+2}\simeq \mathbb{H}P^1 \to
\mathbb{H}P^{2}$.
Then, after localization at $p$,
	\[
	E(2\nu) \in E^2(\pi_{2\lambda+2}S^{\lambda+1}).
	\]
\end{proposition}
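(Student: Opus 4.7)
The plan is to run the classical James EHP argument equivariantly at odd primes. Specifically, I aim to construct an element $\alpha \in \pi_{2\lambda+2}(S^{\lambda+1})_{(p)}$ satisfying
\[
2\nu \;=\; [\iota_{\lambda+2},\iota_{\lambda+2}] \;+\; E(\alpha) \quad \text{in } \pi_{2\lambda+3}(S^{\lambda+2})_{(p)},
\]
where $[\iota_{\lambda+2},\iota_{\lambda+2}]$ denotes the (equivariant) Whitehead square of the identity on $S^{\lambda+2}$. Once this relation is in hand, applying $E$ and using that suspensions annihilate Whitehead products yields
\[
E(2\nu) \;=\; E[\iota,\iota] + E^{2}(\alpha) \;=\; E^{2}(\alpha) \;\in\; E^{2}\bigl(\pi_{2\lambda+2}(S^{\lambda+1})\bigr),
\]
as desired.

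To produce $\alpha$, I would invoke the equivariant James EHP fiber sequence
\[
S^{\lambda+1} \;\longrightarrow\; \Omega S^{\lambda+2} \;\xrightarrow{h_2}\; \Omega S^{2\lambda+3},
\]
valid $p$-locally at any odd prime because $\lambda+1$ is an odd-dimensional $G$-representation; this is the equivariant analogue of the Serre--Toda splitting $\Omega S^{2n+2}_{(p)} \simeq S^{2n+1}_{(p)}\times \Omega S^{4n+3}_{(p)}$ at odd primes. This gives a long exact sequence in homotopy containing
\[
\pi_{2\lambda+2}(S^{\lambda+1}) \;\xrightarrow{E}\; \pi_{2\lambda+3}(S^{\lambda+2}) \;\xrightarrow{H}\; \pi_{2\lambda+3}(S^{2\lambda+3}).
\]
I would then compute the two relevant Hopf invariants. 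First, $H(\nu)$ is a generator of $\pi_{2\lambda+3}(S^{2\lambda+3})_{(p)} = A(G)_{(p)}$, because $\nu$ is the attaching map for $\mathbb{H}P^2 = S^{\lambda+2} \cup_{\nu} D^{2\lambda+4}$, and $H(\nu)$ is identified with the cup-square in the cohomology of this two-cell complex. Second, $H([\iota_{\lambda+2},\iota_{\lambda+2}]) = 2$, by the standard formula $H([\iota_V,\iota_V]) = 1+(-1)^{\dim V}$ applied to the four-dimensional representation $V = \lambda+2$. Consequently $H(2\nu - [\iota,\iota]) = 0$, and by exactness of the EHP sequence, there exists $\alpha \in \pi_{2\lambda+2}(S^{\lambda+1})$ with $E(\alpha) = 2\nu - [\iota,\iota]$.

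The principal obstacle is verifying the equivariant EHP fibration at odd primes, as the standard references treat only the nonequivariant case. I expect this to follow from an equivariant adaptation of James' filtration on $\Omega\Sigma S^{\lambda+1}$, together with the usual $p$-local vanishing of the higher James filtrations when the base representation is odd-dimensional. The remaining ingredients---the identification of $H(\nu)$ via cup-squares, the Whitehead-square formula, and the vanishing of suspended Whitehead products---all adapt without difficulty from the classical case, so the real technical content lies in the equivariant EHP step.
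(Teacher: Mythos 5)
Your proposal follows essentially the same route as the paper: deduce the statement from the congruence $2\nu \equiv [\iota_{\lambda+2},\iota_{\lambda+2}] \bmod E(\pi_{2\lambda+2}S^{\lambda+1})$ via the equivariant EHP sequence and the Hopf invariant computations $H(\nu)=1$, $H([\iota,\iota])=2$, then suspend and use that suspended Whitehead products die by Eckmann--Hilton. The one step you leave open --- the $p$-local equivariant EHP fibration --- is handled in the paper not by redoing James' filtration equivariantly but by a simpler device: the James splitting is already known equivariantly (Kronholm), which defines $H$, and the claim that $S^{\lambda+1}\to \Omega S^{\lambda+2}\to \Omega S^{2\lambda+3}$ is a $p$-local fiber sequence is checked by passing to underlying spaces and to fixed points (both of which preserve homotopy limits and colimits), reducing to the classical nonequivariant statement for $S^1\to\Omega S^2\to\Omega S^3$ and $S^3\to\Omega S^4\to\Omega S^7$. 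The same reduction to fixed points and underlying classes, together with injectivity of the degree map $\pi_{2\lambda+3}S^{2\lambda+3}\to\bigoplus_K\mathbb{Z}$, is what makes your Hopf invariant comparison rigorous equivariantly (on fixed points $\nu$ becomes $\eta$, which also has Hopf invariant one).
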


\begin{proof}[Proof of Proposition \ref{prop:exotic-h} assuming
Proposition \ref{prop:dbl-susp}] 
Recall from 
that there is an equivalence $\Omega \mathbb{H}P^{\infty} \simeq
S^{\lambda+1}$ and hence, by Example \ref{ex:loops},
we get an $\mathbb{A}_{\infty}$-structure on
$S^{\lambda+1}$.
This, in turn, determines an $\mathbb{A}_2$-structure on
$S^{\lambda+1}$, and
the attaching map for $\mathrm{B}_{\le 1}S^{\lambda+1}
\to \mathrm{B}_{\le 2}S^{\lambda+1}$ is precisely
	\[
	\nu: S^{2\lambda+3} \to S^{\lambda+2}.
	\]

In general,
if one modifies an $\mathbb{A}_2$-structure on $X$ by an element
$d \in [X \wedge X, X]$, then the attaching map
	\[
	\Sigma (X \wedge X) \to \Sigma X
	\]
for $\mathrm{B}_{\le 2}X$ is altered by $E(d)$. 
After inverting 2, Proposition \ref{prop:dbl-susp} implies that
$E(\nu) = E^2(x)$ for some $x \in \pi_{2\lambda+2}S^{\lambda+1}$. 
So alter the $\mathbb{A}_2$-structure above by $x$ and the suspension
of the attaching map for $\mathrm{B}_{\le 2}S^{\lambda+1}$ becomes
null, proving the result.
\end{proof}

Now we turn to the proof of Proposition \ref{prop:dbl-susp}. 
We will deduce this theorem from a slightly stronger result.
Recall that, given classes $x \in [\Sigma A, X]$ and
$y \in [\Sigma B, X]$, the Whitehead product
${[x,y] \in [\Sigma (A \wedge B), X]}$ is induced by
the commutator of $\pi_Ax$ and $\pi_By$ in the group
$[\Sigma(A\times B), X]$. 

\begin{lemma}\label{lem:bracket}
Let $\iota_{\lambda+2} \in \pi_{2\lambda+3}S^{\lambda+2}$
be the fundamental class. Then, after localization at $p$,
	\[
	[\iota_{\lambda+2}, \iota_{\lambda+2}] \equiv 2\nu 
	\mod
	E(\pi_{\lambda+2}S^{\lambda+1}).
	\]
\end{lemma}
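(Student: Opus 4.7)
The plan is to deduce the lemma from the exactness of the equivariant EHP sequence together with a direct computation of James--Hopf invariants. For the $G$-space $S^{\lambda+1}$, the equivariant James construction provides a model for $\Omega S^{\lambda+2}$ equipped with a James--Hopf map $H: \Omega S^{\lambda+2} \to \Omega S^{2\lambda+3}$ fitting into a fiber sequence
$$S^{\lambda+1} \xrightarrow{E} \Omega S^{\lambda+2} \xrightarrow{H} \Omega S^{2\lambda+3}$$
which is exact in the metastable range. In particular one obtains an exact sequence of equivariant homotopy groups
$$\pi_{2\lambda+2}^G S^{\lambda+1} \xrightarrow{E} \pi_{2\lambda+3}^G S^{\lambda+2} \xrightarrow{H} \pi_{2\lambda+3}^G S^{2\lambda+3} \cong A(G),$$
reducing the lemma to the identity $H([\iota_{\lambda+2},\iota_{\lambda+2}]) = 2 \cdot H(\nu)$ in $A(G)$.

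I would compute $H(\nu)$ by identifying the cofiber of $\nu: S^{2\lambda+3} \to S^{\lambda+2}$ with the equivariant space $\mathbb{H}P^2 = \mathbb{H}P^1 \cup_\nu D^{2\lambda+4}$ of Section \ref{sec:action}, whose cell structure places a single quadratic attaching between degrees $\lambda+2$ and $2\lambda+4$. The standard cofiber-cohomology description of the Hopf invariant (equivalently, a direct computation inside the James filtration) then gives $H(\nu) = 1$, the class of the trivial $G$-set, with appropriate orientation conventions.

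To compute $H([\iota_{\lambda+2},\iota_{\lambda+2}])$ I would exploit the universal property of the Whitehead map $w: S^{2\lambda+3} \to S^{\lambda+2} \vee S^{\lambda+2}$, whose cofiber is $S^{\lambda+2} \times S^{\lambda+2}$. Postcomposing with the fold $\nabla$ gives $[\iota,\iota] = \nabla \circ w$ and induces a map $S^{\lambda+2} \times S^{\lambda+2} \to C_{[\iota,\iota]}$ under which the degree-$(\lambda+2)$ generator of the cofiber pulls back to $a+b$, where $a, b$ are the two projection generators. Since $a^2 = b^2 = 0$ for dimensional reasons, the cup square is $(a+b)^2 = 2ab$, which is twice the top class. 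By the cofiber-cohomology definition of the Hopf invariant this yields $H([\iota_{\lambda+2},\iota_{\lambda+2}]) = 2$. Combining, $H([\iota_{\lambda+2},\iota_{\lambda+2}] - 2\nu) = 0$, so by exactness of the EHP sequence the difference lies in the image of $E$, proving the lemma.

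The main obstacle is justifying the equivariant inputs: exactness of the EHP sequence in the required metastable range, and the equivariant cup-product computation in the cofibers. Classically both are consequences of the James splitting of $\Sigma \Omega \Sigma X$; one must verify that the relevant connectivity estimates and polynomial structure of $H^*(\mathbb{H}P^\infty)$ persist after restriction to every subgroup of $G$, and that the identification $\pi_{2\lambda+3}^G S^{2\lambda+3} \cong A(G)$ is compatible with the Hopf invariant receiving the integer $2$ as "twice the identity $G$-set" (rather than some transferred class), which after $p$-localization follows from restricting to the trivial subgroup and the classical statement.
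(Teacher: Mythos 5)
Your reduction is the same as the paper's: both arguments run the ($p$-local) equivariant EHP fibration $S^{\lambda+1}\xrightarrow{E}\Omega S^{\lambda+2}\xrightarrow{H}\Omega S^{2\lambda+3}$ coming from the equivariant James splitting, and both reduce the lemma to the identity $H([\iota_{\lambda+2},\iota_{\lambda+2}])=2H(\nu)$ in $\pi_{2\lambda+3}S^{2\lambda+3}$. The divergence, and the gap, is in how that identity is verified. The group $\pi_{2\lambda+3}^{G}S^{2\lambda+3}$ is (essentially) the Burnside ring $A(G)$, which for $G=C_{p^n}$ has rank $n+1$ even after $p$-localization. Your two cup-square computations (in $\mathbb{H}P^2$ and in $S^{\lambda+2}\times S^{\lambda+2}$) each extract a single integer, and your proposed sanity check --- ``restrict to the trivial subgroup and invoke the classical statement'' --- only controls the image of the class under the restriction homomorphism $A(G)\to\mathbb{Z}$. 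That homomorphism is far from injective: its kernel contains $[G/e]-p^n$, $[G/C_p]-p^{n-1}$, etc., and these do not die after localizing at $p$. So knowing that $H([\iota,\iota])-2H(\nu)$ has underlying degree zero does not show it vanishes in $\pi_{2\lambda+3}^GS^{2\lambda+3}$; it could a priori be a nontrivial combination of transferred classes, which would \emph{not} lie in the image of $E$.

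The repair is to detect classes in $\pi_{2\lambda+3}S^{2\lambda+3}$ by recording the degree on $K$-fixed points for \emph{every} subgroup $K\subseteq G$; this collection of homomorphisms is injective (the equivariant Hopf degree theorem, \cite[8.4.1]{tdieck}). Since both the Whitehead product and the James--Hopf map commute with passage to fixed points, the identity then reduces to one classical Hopf-invariant computation on the underlying spaces ($H([\iota_4,\iota_4])=2=2H(\nu)$, since $\nu:S^7\to S^4$ has Hopf invariant $1$) and one on each fixed-point space, where $S^{\lambda+2}$ becomes $S^2$ and $\nu$ becomes $\eta$ (Remark \ref{rmk:nu-fixed-eta}), so that $H([\iota_2,\iota_2])=2=2H(\eta)$. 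This is exactly the paper's argument. Your cofiber/cup-square computations are correct nonequivariantly and in fact supply precisely these fixed-point inputs; what is missing is the observation that one must run them on all fixed-point sets simultaneously, not just on the underlying space, before the conclusion in $A(G)$ follows.
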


\begin{proof}[Proof of Proposition \ref{prop:dbl-susp} assuming
Lemma \ref{lem:bracket}.] Lemma \ref{lem:bracket} states
$2\nu - [\iota_{\lambda+2}, \iota_{\lambda+2}]$ lies in
the image of $E$. Hence $E(2\nu) - E([\iota_{\lambda+2},
\iota_{\lambda+2}])$ lies in the image of $E^2$.
The result now follows from the observation that suspensions
of Whitehead products vanish. Indeed, with notation
as in the definition of the Whitehead product above,
$E([x,y])$ is computed as a commutator in
$[\Sigma^2(A \times B), X]$. But this is an abelian group
by the Eckmann-Hilton argument, so commutators vanish.
\end{proof}

In order to prove Lemma \ref{lem:bracket} we will establish
an exact sequence of the form
	\[
	\xymatrix{
	\pi_{2\lambda+2}S^{\lambda+1}\ar[r]^{E}&
	\pi_{2\lambda+3}S^{\lambda+2}\ar[r]^{H} &
	\pi_{2\lambda+3}S^{2\lambda+3}
	}
	\]
and then identify the image of the Whitehead product
in the last group. To that end, we note that the James
splitting
	\[
	\Sigma \Omega \Sigma X \simeq
	\Sigma \left(\bigvee_{k\ge 1} X^{\wedge k}\right)
	\]
holds in $\mathsf{Spaces}^G_*$ (see \cite{kron}). This provides
a natural transformation
	\[
	H: \Omega\Sigma X \to \Omega \Sigma X^{\wedge 2}
	\]
which induces a map
	\[
	H: \pi_{\star+1} \Sigma X \to \pi_{\star+1}\Sigma X^{\wedge 2}
	\]
for any $X$.

\begin{lemma} The sequence
	\[
	\xymatrix{
	S^{\lambda+1} \ar[r]^{E} & \Omega S^{\lambda+2}\ar[r]^H & 
	\Omega S^{2\lambda+3}
	}
	\]
is a fiber sequence when localized at $p$.
\end{lemma}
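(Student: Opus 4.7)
The plan is to verify the claim by passing to $K$-fixed points for each subgroup $K \subseteq G$ and reducing to the classical non-equivariant EHP-type sequence at odd primes. Writing $F$ for the fiber of $H$, the map $E$ factors canonically as $S^{\lambda+1} \to F \to \Omega S^{\lambda+2}$, and it suffices to show the first map is a $p$-local equivalence. Since formation of $\Omega$ and $\Sigma$ (by $S^1$ with trivial action) commutes with $(-)^K$, and the diagonal $G$-action on smash powers satisfies $(X^{\wedge k})^K = (X^K)^{\wedge k}$, the equivariant Whitehead theorem reduces this to checking the corresponding statement on $K$-fixed points for each $K \subseteq G$.

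Next I would unpack what the sequence looks like on fixed points. The map $H$ is, by construction, the map assembled from the projection onto the second wedge summand in the $G$-equivariant James splitting $\Sigma\Omega\Sigma X \simeq \bigvee_{k\ge 1}\Sigma X^{\wedge k}$ cited from \cite{kron}. Naturality of this splitting together with the compatibility $(X^{\wedge k})^K = (X^K)^{\wedge k}$ ensures that $H^K$ is the classical James Hopf invariant for the pointed space $(S^{\lambda+1})^K$, and similarly $E^K$ is the classical loop-suspension unit. Since $\lambda^K = 0$ for nontrivial $K$ while $\lambda^{\{e\}} = \mathbb{R}^2$, the sequence on $K$-fixed points takes the form
\[
S^1 \longrightarrow \Omega S^2 \longrightarrow \Omega S^3
\]
when $K$ is a nontrivial subgroup of $G$, and
\[
S^3 \longrightarrow \Omega S^4 \longrightarrow \Omega S^7
\]
when $K$ is trivial.

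Both are instances of the classical sequence $S^{2n-1} \to \Omega S^{2n} \to \Omega S^{4n-1}$ with $n = 1$ or $n = 2$, which at odd primes is a $p$-local fiber sequence: the Serre decomposition $\Omega S^{2n}_{(p)} \simeq S^{2n-1}_{(p)} \times \Omega S^{4n-1}_{(p)}$ (valid for every $n$ at odd $p$) exhibits it as the inclusion and projection of a product. This provides the required $p$-local equivalence on each fixed-point subspace, and the equivariant Whitehead theorem completes the argument.

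The main obstacle I expect is verifying that the James splitting from \cite{kron} is genuinely natural enough to commute with $(-)^K$, so that $H^K$ really is the classical Hopf invariant on $(S^{\lambda+1})^K$; granting this, the proposition reduces to the standard odd-primary Serre fibration. A minor additional point is that the factorization of $E$ through $F$ is compatible with taking fixed points, which follows from the fact that fiber sequences in $G$-spaces are computed pointwise on fixed points.
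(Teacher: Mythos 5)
Your proposal is correct and follows essentially the same route as the paper: pass to underlying spaces and fixed points (which preserve homotopy limits and colimits), identify the resulting sequences as the classical $S^{2n-1}\to\Omega S^{2n}\to\Omega S^{4n-1}$ for $n=1,2$, and invoke the classical odd-primary result (the paper cites James where you cite the equivalent Serre splitting). The naturality of the equivariant James splitting under passage to fixed points, which you flag as the main potential obstacle, is exactly the point the paper handles by noting that fixed points and underlying spaces commute with the relevant limits and colimits.
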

\begin{proof} Let $F$ denote the homotopy fiber of
$H$ so that we have a natural map $S^{\lambda+1}\to F$. 
We would like to show this is an equivalence.
Since restriction to underlying spaces and fixed points
preserves homotopy limits and colimits, we are reduced to
the nonequivariant statement that
	\[
	\xymatrix{
	S^{2n+1} \ar[r]^E& \Omega S^{2n+2} \ar[r]^H& \Omega S^{4n+3}
	}
	\]
is a fiber sequence when localized
at $p$ for $n=0, 1$. In fact, it is a classical result of
James \cite{james}
that this is a $p$-local fiber sequence for any $n\ge 0$.
\end{proof}

We will need some control over the last term in this sequence,
which is provided by an equivariant version of the 
Brouwer-Hopf degree theorem. For us, the only fact we need
is that the homomorphism
	\[
	\pi_{2\lambda+3}S^{2\lambda+3} \to \bigoplus_{K \subseteq G} \mathbb{Z},
	\]
recording each of the degrees of a map on $K$-fixed points, is
an injection. See, e.g., \cite[8.4.1]{tdieck}. We now prove the only remaining lemma necessary for
producing the exotic $H$-space structure on $S^{\lambda+1}$.

\begin{proof}[Proof of lemma \ref{lem:bracket}] The formation
of Whitehead products commutes with passage to fixed points
and restriction to underlying classes, as does the map $H$.
From the remarks above, it then suffices to check the nonequivariant
formulas:
	\[
	H([\iota_4, \iota_4]) = 2H(\nu),
	\]
	\[
	H([\iota_2, \iota_2]) = 2H(\nu^K), \quad K\ne\{e\}.
	\]
But $\nu$ and $\nu^K = \eta$ 
(see Remark \ref{rmk:nu-fixed-eta}) have Hopf invariant 1,
while $[\iota_{2n}, \iota_{2n}]$ has Hopf invariant 2 for any $n\ge 1$,
whence the result.
\end{proof}

Since the attaching map in $\mathrm{B}_{\le 2}S^{\lambda+1}_{(p)}$
is stably null, the following lemma is immediate.

\begin{lemma}\label{lem:h-extend-p}
There exists a dotted map making the diagram
below commute up to homotopy in $\mathsf{Sp}^G$:
	\[
	\xymatrix{
	S_{(p)}^{\lambda+2} \ar[r]^-{1-p} \ar[d]& 
	\Sigma^{\lambda+2}\mathrm{gl}_1S^0_{(p)}\\
	\Sigma^{\infty}\mathrm{B}_{\le 2}S_{(p)}^{\lambda+1}\ar@{-->}[ur] &
	}
	\]
\end{lemma}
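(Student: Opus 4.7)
The plan is to exploit the stable splitting of $\Sigma^{\infty}\mathrm{B}_{\le 2}S^{\lambda+1}_{(p)}$ that is implicit in the parenthetical remark preceding the lemma, and then define the dotted arrow summandwise.

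First, I would unpack the cofiber sequence of pointed $G$-spaces
\[
\mathrm{B}_{\le 1} S^{\lambda+1}_{(p)} \longrightarrow \mathrm{B}_{\le 2} S^{\lambda+1}_{(p)} \longrightarrow \Sigma^2\bigl(S^{\lambda+1}_{(p)} \wedge S^{\lambda+1}_{(p)}\bigr)
\]
discussed in the $k=2$ example. Since $\mathrm{B}_{\le 1} S^{\lambda+1}_{(p)} \simeq \Sigma S^{\lambda+1}_{(p)} = S^{\lambda+2}_{(p)}$ and $\Sigma^2(S^{\lambda+1}_{(p)} \wedge S^{\lambda+1}_{(p)}) \simeq S^{2\lambda+4}_{(p)}$, after applying $\Sigma^{\infty}$ this becomes a cofiber sequence in $\mathsf{Sp}^G$ of the form
\[
S^{\lambda+2}_{(p)} \longrightarrow \Sigma^{\infty}\mathrm{B}_{\le 2} S^{\lambda+1}_{(p)} \longrightarrow S^{2\lambda+4}_{(p)}
\]
whose connecting (attaching) map is the suspension of the attaching map for $\mathrm{B}_{\le 2} S^{\lambda+1}_{(p)}$.

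Next, I would invoke Proposition \ref{prop:exotic-h}: the $\mathbb{A}_2$-structure on $S^{\lambda+1}_{(p)}$ constructed there was chosen precisely so that the attaching map $\Sigma(S^{\lambda+1} \wedge S^{\lambda+1}) \to \Sigma S^{\lambda+1}$ for $\mathrm{B}_{\le 2}S^{\lambda+1}_{(p)}$ stably splits, i.e.\ its image in $\mathsf{Sp}^G$ is null. Consequently the cofiber sequence above splits, giving an equivalence
\[
\Sigma^{\infty}\mathrm{B}_{\le 2} S^{\lambda+1}_{(p)} \simeq S^{\lambda+2}_{(p)} \vee S^{2\lambda+4}_{(p)}
\]
under which the canonical map $S^{\lambda+2}_{(p)} \to \Sigma^{\infty}\mathrm{B}_{\le 2} S^{\lambda+1}_{(p)}$ identifies with the wedge inclusion of the first summand.

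Finally, I would define the dotted arrow as the wedge of $1-p : S^{\lambda+2}_{(p)} \to \Sigma^{\lambda+2}\mathrm{gl}_1 S^0_{(p)}$ on the first summand and the zero map on the second summand. The composite $S^{\lambda+2}_{(p)} \to \Sigma^{\infty}\mathrm{B}_{\le 2} S^{\lambda+1}_{(p)} \to \Sigma^{\lambda+2}\mathrm{gl}_1 S^0_{(p)}$ is then, by construction, homotopic to $1-p$. There is essentially no obstacle once the splitting is in hand; the real content of the lemma is Proposition \ref{prop:exotic-h} (and hence, indirectly, the James-style fiber sequence and the computation of the Whitehead product $[\iota_{\lambda+2},\iota_{\lambda+2}]$), and the present statement is simply a convenient packaging of that splitting.
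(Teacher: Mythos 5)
Your argument is exactly the paper's: the paper simply says the lemma is immediate because the attaching map of $\mathrm{B}_{\le 2}S^{\lambda+1}_{(p)}$ is stably null (Proposition \ref{prop:exotic-h}), and your proposal just spells out the resulting wedge decomposition $\Sigma^{\infty}\mathrm{B}_{\le 2}S^{\lambda+1}_{(p)} \simeq S^{\lambda+2}_{(p)} \vee S^{2\lambda+4}_{(p)}$ and defines the extension summandwise. This is correct and faithfully fills in the details the paper leaves implicit.
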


We will eventually need to produce a Thom isomorphism
in mod $p$ cohomology which respects our extra structure. For
that we require the next lemma.

\begin{lemma}\label{lem:h-null}
Choose a dotted map $\tilde{f}$ as in the previous
lemma. Then the composite
	\[
	\xymatrix{
	\Sigma^{\infty}\mathrm{B}_{\le 2} S_{(p)}^{\lambda+1}\ar[r]^{\tilde{f}}&
	\Sigma^{\lambda+2}\mathrm{gl}_1(S^0_{(p)}) \ar[r] &
	\Sigma^{\lambda+2}\mathrm{gl}_1(\mathrm{H}\underline{\mathbb{F}}_p)
	}
	\]
is null.
\end{lemma}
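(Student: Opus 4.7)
The plan is to prove the lemma for \emph{every} valid choice of $\tilde f$, by combining the stable splitting of $\mathrm{B}_{\le 2}S^{\lambda+1}_{(p)}$ with a vanishing computation in the target.

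By Proposition~\ref{prop:exotic-h}, the attaching map in the cofiber sequence
\[
\Sigma\bigl(S^{\lambda+1}_{(p)}\bigr)^{\wedge 2}\longrightarrow\mathrm{B}_{\le 1}S^{\lambda+1}_{(p)}\longrightarrow\mathrm{B}_{\le 2}S^{\lambda+1}_{(p)}
\]
is stably null, yielding an equivalence of $G$-spectra $\Sigma^{\infty}\mathrm{B}_{\le 2}S^{\lambda+1}_{(p)}\simeq S^{\lambda+2}_{(p)}\vee S^{2\lambda+4}_{(p)}$. Under this splitting, any $\tilde f$ decomposes into a bottom-cell component in $(\pi_0^GS^0_{(p)})^{\times}$, forced by the lifting condition to equal $1-p$, and a top-cell component in $\pi_{\lambda+2}^G\mathrm{gl}_1(S^0_{(p)})$, entirely unconstrained. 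Because $(\lambda+2)^H>0$ for every subgroup $H\subseteq G$, the $G$-space $S^{\lambda+2}$ is equivariantly connected, and $\pi_{\lambda+2}^G\mathrm{gl}_1(R)\cong\pi_{\lambda+2}^G R$ for any connective $\mathbb{E}_{\infty}$-ring $R$. Under this identification, the map $\mathrm{gl}_1(S^0_{(p)})\to\mathrm{gl}_1(\mathrm{H}\underline{\mathbb{F}}_p)$ agrees with the Hurewicz homomorphism in this degree.

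To finish, I check that both components of the composite into $\Sigma^{\lambda+2}\mathrm{gl}_1(\mathrm{H}\underline{\mathbb{F}}_p)$ are null. The bottom-cell component is the image of $1-p$ in $(\pi_0^G\mathrm{H}\underline{\mathbb{F}}_p)^{\times}=\mathbb{F}_p^{\times}$, which equals $1$, the identity of the unit group and the basepoint of $\mathrm{gl}_1(\mathrm{H}\underline{\mathbb{F}}_p)$; the corresponding map is therefore null. For the top cell, I will show that the ambient group $\pi_{\lambda+2}^G\mathrm{H}\underline{\mathbb{F}}_p$ itself vanishes, so that the image is null independently of choice. Apply the cofiber sequence $S(\lambda)_+\to S^0\to S^{\lambda}$, in which $S(\lambda)$ is the unit sphere in $\lambda$, a free $G$-space because $\lambda$ is faithful on $C_{p^n}$, with quotient $S(\lambda)/G\simeq S^1$. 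Smashing with $\mathrm{H}\underline{\mathbb{F}}_p$ and applying $\pi_{\lambda+2}^G$ gives an exact sequence whose adjacent terms
\[
\pi_{\lambda+2}^G\!\bigl(S(\lambda)_+\wedge\mathrm{H}\underline{\mathbb{F}}_p\bigr)
\qquad\text{and}\qquad
\pi_{\lambda+2}^G\!\bigl(S^{\lambda}\wedge\mathrm{H}\underline{\mathbb{F}}_p\bigr)=\pi_{2}^G\mathrm{H}\underline{\mathbb{F}}_p
\]
both vanish: the right-hand group is zero by $0$-truncation of $\mathrm{H}\underline{\mathbb{F}}_p$ in integer grading, and the left-hand group is zero by a cell-counting argument, since $S(\lambda)$ admits a free $G$-CW structure with cells only in dimensions $0$ and $1$ while $|\lambda+2|=4$, and a free $n$-cell $G_+\wedge D^n$ contributes to $\pi_V^G(-\wedge\mathrm{H}\underline{\mathbb{F}}_p)$ only when $|V|=n$.

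The key technical input is the vanishing of $\pi_{\lambda+2}^G\mathrm{H}\underline{\mathbb{F}}_p$, which uniformly rules out any contribution from the indeterminacy in $\tilde f$. Without this step one would only obtain nullity for the particular lift whose top-cell component had been chosen to be zero, falling short of the universal claim in the lemma.
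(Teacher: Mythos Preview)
Your argument is correct and follows essentially the same approach as the paper. Both proofs observe that the bottom-cell composite is null because $1-p\equiv 1$ in $(\pi_0^G\mathrm{H}\underline{\mathbb{F}}_p)^\times$, and that the remaining top-cell contribution lands in $\pi_{\lambda+2}^G\mathrm{gl}_1(\mathrm{H}\underline{\mathbb{F}}_p)\cong\pi_{\lambda+2}^G\mathrm{H}\underline{\mathbb{F}}_p=0$. The only cosmetic differences are that the paper uses the cofiber sequence $S^{\lambda+2}\to\mathrm{B}_{\le 2}S^{\lambda+1}\to S^{2\lambda+4}$ rather than the stable splitting (so it does not invoke Proposition~\ref{prop:exotic-h} here), and it dispatches the vanishing of $\pi_{\lambda+2}^G\mathrm{H}\underline{\mathbb{F}}_p$ in one line by noting that $S^{\lambda+2}$ is $2$-connective while $\mathrm{H}\underline{\mathbb{F}}_p$ is $0$-truncated, whereas you unwind this via the $S(\lambda)_+\to S^0\to S^\lambda$ sequence.
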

\begin{proof} The composite
	\[
	\xymatrix{
	S^{\lambda+2}_{(p)} \ar[r] &
	\Sigma^{\infty}\mathrm{B}_{\le 2} S_{(p)}^{\lambda+1}\ar[r]^{\tilde{f}}&
	\Sigma^{\lambda+2}\mathrm{gl}_1(S^0_{(p)}) \ar[r] &
	\Sigma^{\lambda+2}\mathrm{gl}_1(\mathrm{H}\underline{\mathbb{F}}_p)
	}
	\]
vanishes since 
$1-p = 1 \in \pi_0^G(\mathrm{gl}_1(\mathrm{H}\underline{\mathbb{F}}_p)$
is the basepoint component. So the map $\tilde{f}$ factors through
some map
	\[
	S^{2\lambda+4} \longrightarrow 
	\Sigma^{\lambda+2}\mathrm{gl}_1(\mathrm{H}\underline{\mathbb{F}}_p).
	\]
But
	\[
	\pi_{\lambda+2}^{G}\mathrm{gl}_1(\mathrm{H}\underline{\mathbb{F}}_p)
	\simeq \pi_{\lambda+2}^{G}\mathrm{H}\underline{\mathbb{F}}_p
	= 0
	\]
since $S^{\lambda+2}$ is $2$-connective, whence the claim.
\end{proof}

Finally, we arrive at the proof of the main theorem of the section.

\begin{proof}[Proof of Theorem \ref{thm:h-orient}]
Choose a dotted map as in Lemma
\ref{lem:h-extend-p} and let $f$ be its adjoint,
	\[
	f: \mathrm{B}_{\le 2}S^{\lambda+1}_{(p)}
	\longrightarrow \mathrm{B}^{\lambda+2}\mathrm{GL}_1(S^0_{(p)}).
	\]

Then the map $1-p: S^{\lambda+1}_{(p)} \to 
\mathrm{B}^{\lambda+1}\mathrm{GL}_1(S^0_{(p)})$
factors as a composite:
	\[
	\xymatrix{
	S^{\lambda+1}_{(p)}
	\ar[r] & \Omega \mathrm{B}_{\le 2}S^{\lambda+1}_{(p)}
	\ar[r]^-{\Omega f} &
	\Omega \mathrm{B}^{\lambda+2}\mathrm{GL}_1(S^0_{(p)})
	\ar[r] & \mathrm{B}^{\lambda+1}\mathrm{GL}_1(S^0_{(p)})
	},
	\]
each of which is an $H$-map. This proves part (i) of the theorem.

To prove part (ii), consider the diagram:
	\[
	\xymatrix{
	S^{\lambda+1}_{(p)}
	\ar[r] & \Omega \mathrm{B}_{\le 2}S^{\lambda+1}_{(p)}
	\ar[r]^-{\Omega f} &
	\Omega \mathrm{B}^{\lambda+2}\mathrm{GL}_1(S^0_{(p)})
	\ar[d]_-{\Omega\mathrm{B}^{\lambda+2}\mathrm{GL}_1(\iota)}
	\ar[r] & \mathrm{B}^{\lambda+1}\mathrm{GL}_1(S^0_{(p)})\ar[d]\\
	&& 
	\Omega\mathrm{B}^{\lambda+2}\mathrm{GL}_1(
	\mathrm{H}\underline{\mathbb{F}}_p) \ar[r] &
	\mathrm{B}^{\lambda+1}
	\mathrm{GL}_1(\mathrm{H}\underline{\mathbb{F}}_p)
	}
	\]
where $\iota: S^0_{(p)} \to \mathrm{H}\underline{\mathbb{F}}_p$
is the unit map.

The composite 
	\[
	\xymatrix{
	\mathrm{B}_{\le 2}S^{\lambda+1}_{(p)}
	\ar[r]^-{f} & \mathrm{B}^{\lambda+2}\mathrm{GL}_1(S^0_{(p)})
	\ar[rr]^-{\mathrm{B}^{\lambda+2}\mathrm{GL}_1(\iota)} &&
	\mathrm{B}^{\lambda+2}\mathrm{GL}_1(
	\mathrm{H}\underline{\mathbb{F}}_p)
	}
	\]
is null by Lemma \ref{lem:h-null}. The loop of this composite
is then null through $\mathbb{A}_{\infty}$-maps and the result follows.
\end{proof}

\section{Computing the zeroth homotopy Mackey functor}\label{sec:norm}

In this section, we establish that the zeroth homotopy Mackey functor of our Thom spectrum is as expected.  That is to say, we give a proof that
	\[
	\underline{\pi}_0\left(\Omega^{\lambda}S^{\lambda+1}\right)^{\mu}
	=
	\underline{\mathbb{F}}_p.
	\]

By construction, 
$\left(\Omega^{\lambda}S^{\lambda+1}\right)^{\mu}$ receives
a map from the mod $p$ Moore spectrum
$M(p) = (S^1)^{\mu}$. This is enough to guarantee that
$p=0$ in 
$\underline{\pi}_0\left(\Omega^{\lambda}S^{\lambda+1}\right)^{\mu}$.
However, $\underline{\pi}_0S^0$ is the Burnside Mackey
functor $\underline{A}$, and $\underline{A}/(p)$ is not
$\underline{\mathbb{F}}_p$. For example, when $G=C_p$,
we have
	\[
	\underline{A}/(p) = \begin{gathered}
	\xymatrix{
	\mathbb{F}_p\left\{ [C_p]\right\}\ar@/_/[d]\\
	\mathbb{F}_p\ar@/_/[u]
	}
	\end{gathered}.
	\]
We will need to use some extra structure on 
$\left(\Omega^{\lambda}S^{\lambda+1}\right)^{\mu}$
to show that $[C_p]$ also vanishes. More generally,
we must show that $[C_{p^n}/C_{p^k}]$ vanishes
in the Hurewicz image for all $k$.

For the remainder of this section we write $G = C_{p^n}$
for a cyclic group of prime power order.

\begin{definition} We say that a $G$-spectrum
$X$ is \textbf{weakly normed} if it is equipped
with a map $S^0 \to X$, and, for each $H \subseteq G$, a map of $H$-spectra
$\mathrm{N}^HX \to X$ such that the diagram
	\[
	\xymatrix{
	\mathrm{N}^H(S^0)\ar[d]\ar@{=}[r] & S^0\ar[d]\\
	\mathrm{N}^H(X) \ar[r] & X
	}
	\]
commutes in the homotopy category.
\end{definition}

\begin{remark} This is the weakest structure necessary to run the
arguments below, but it is perhaps not the most natural definition.
In most examples one at least has compatibility between the norms as
$H$ varies, and the map $S^0 \to X$ acts as a unit for an
underlying multiplication.
\end{remark}

The Thom spectrum $\left(\Omega^{\lambda}S^{\lambda+1}\right)^{\mu}$
is weakly normed, as we now show. This result is well-known; compare,
for example, \cite[Theorem 2.12]{hill-disks}.

\begin{lemma}\label{lem:lambda-has-norms} If $X$ is an $\mathbb{E}_{\lambda}$-algebra
then it is canonically weakly normed.
\end{lemma}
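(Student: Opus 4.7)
The plan is to construct both pieces of data—the unit $S^0 \to X$ and the norm maps $\mathrm{N}^H X \to X$—directly from the $\mathbb{E}_\lambda$-operad action on $X$. The unit is essentially automatic: an $\mathbb{E}_\lambda$-algebra structure includes a nullary operation, which provides the required map $S^0 \to X$, and compatibility with the norms will reduce to the fact that $\mathrm{N}^H(S^0) = S^0$ canonically.

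For the norm map, I first restrict the $\mathbb{E}_\lambda$-structure along the inclusion $H \subseteq G$ to obtain an $\mathbb{E}_{\lambda|_H}$-algebra structure on $\mathrm{res}_H^G X$ in $H$-spectra. The critical observation is that $\lambda|_H$ is a \emph{faithful} $H$-representation for every subgroup $H \subseteq G = C_{p^n}$: since a generator of $G$ acts on $\lambda \cong \mathbb{C}$ as rotation by $e^{2\pi i / p^n}$, the subgroup $H = C_{p^k}$ acts as rotation by $e^{2\pi i / p^k}$, which remains faithful. Consequently, $D(\lambda|_H) \setminus \{0\}$ admits free $H$-orbits, and choosing such an orbit of small disjoint little disks gives an $H$-equivariant configuration that picks out a point in the relevant operation space of $\mathcal{O}_{\lambda|_H}$. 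The associated operadic multiplication is then an $H$-equivariant map
$$\mathrm{N}^H X := \mathrm{N}_{\{e\}}^H \mathrm{res}_{\{e\}}^H X \longrightarrow \mathrm{res}_H^G X,$$
which serves as the required norm.

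The compatibility square relating $\mathrm{N}^H$ of the unit with the unit of $X$ will then follow from the unitality of the little disks operad: inserting units into each slot of a multiplication yields the unit again. The one point requiring mild care is that this construction a priori depends on a choice of free $H$-orbit, so the resulting map must be shown to be well-defined in the homotopy category. This reduces to checking path-connectivity of the relevant component of the operation space; since free $H$-orbits in $D(\lambda|_H)^\circ$ are parameterized by the quotient of the punctured disk by the free rotation action of $H$, which deformation retracts onto a circle, this connectivity is clear. This is the only modest obstacle; the remainder is a formal consequence of the operadic formalism.
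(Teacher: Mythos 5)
Your argument is correct and is essentially the paper's own proof: both rest on choosing a free $H$-orbit in the configuration space of little disks in $\lambda|_H$ (the paper takes the specific orbit of $p^n$-th roots of unity for $H=G$) and identifying the resulting induced smash power $\bigl((H\times\Sigma_{|H|})/\Gamma\bigr)_+\wedge_{\Sigma_{|H|}}X^{\wedge |H|}$ with $\mathrm{N}^H X$, which is the one nontrivial input (the paper cites Blumberg--Hill for it). Your additional remarks on well-definedness via connectivity of the space of free orbits and on the unit compatibility are fine and, if anything, slightly more careful than the paper's write-up.
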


\begin{proof}[Proof of \ref{lem:lambda-has-norms}] 
In order to conform with the existing literature
we will present a proof within the point-set model of orthogonal $G$-spectra
as in \cite[\S B]{HHR}. In particular, we will model $X$ by a 
positively cofibrant orthogonal $G$-spectrum.

Since the
restriction of an $\mathbb{E}_{\lambda}$-algebra
is still an $\mathbb{E}_{\lambda}$-algebra,
it will suffice to construct the norm
$\mathrm{N}^GX \to X$. 

By definition, $X$ comes equipped with a map
	\[
	\widetilde{\mathrm{Conf}}_{p^n}(\lambda)_+ \wedge_{\Sigma_{p^n}}
	X^{\wedge p^n} \longrightarrow X,
	\]
where $\widetilde{\mathrm{Conf}}_{p^n}(\lambda)$ denotes
the $G$-space of configurations of $p^n$ ordered points in
$\lambda$. Consider the inclusion
$G \hookrightarrow \Sigma_{p^n}$ which sends
a generator to the standard $p^n$-cycle
$(1, 2, ..., p^n)$ and let
$\Gamma$ denote the graph of this inclusion.
Let $\zeta = e^{2\pi i/p^n}$.
Then the ordered tuple $(1, \zeta, \zeta^2, ..., \zeta^{p^n-1})
\in \widetilde{\mathrm{Conf}}_{p^n}(\lambda)$ produces
a $G \times \Sigma_{p^n}$-equivariant inclusion:
	\[
	\frac{G \times \Sigma_{p^n}}{\Gamma}
	\longrightarrow \widetilde{\mathrm{Conf}}_{p^n}(\lambda).
	\]
This, in turn, gives us a map
	\[
	\left(\frac{G \times \Sigma_{p^n}}{\Gamma}\right)_+ \,\,
	\underset{\Sigma_{p^n}}{\wedge} X^{\wedge p^n}
	\longrightarrow X.
	\]
To complete the proof, we note that (\cite[Prop. 6.2]{blumberg-hill}),
for any $G$-spectrum $Y$,
we have
	\[
	\left(\frac{G \times \Sigma_{p^n}}{\Gamma}\right)_+\,\,
	\underset{\Sigma_{p^n}}{\wedge} Y^{\wedge p^n}
	\simeq \mathrm{N}^GY.
	\]
\end{proof}

\begin{proposition}\label{prop:norm-and-vanish} Suppose $X$ is 
weakly normed.
Suppose further
that $p=0 \in \pi_0^{G}X$. Then
$[H/K] = 0 \in \pi_0^{H}X$ for all
$K \subseteq H \subseteq G$.
\end{proposition}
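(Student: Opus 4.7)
The plan is induction on $|H|$, with the outer induction feeding an inner norm computation via transfers. First, I note that $p = 0 \in \pi_0^G X$ restricts to $p = 0 \in \pi_0 X$ on underlying spectra, so the self-map $p \colon S^0 \to X$ is nullhomotopic. Since $N^H(-)$ is a smash power it preserves null maps; composing the nullhomotopy of $N^H(p \cdot 1_X)$ with the weak norm $N^H X \to X$ and using the commuting square in the definition, the composite $1_X \circ N^H(p) \colon S^0 \to X$ is null. Hence the Hurewicz image in $\pi_0^H X$ of $N^H(p) \in A(H) = \pi_0^H S^0$ vanishes. For $H = C_{p^k}$, decomposing the $H$-set $\mathrm{Map}(H,\{1,\dots,p\})$ by orbit type yields
$$N^H(p) = \sum_{j=0}^{k} a_j \, [C_{p^k}/C_{p^j}], \qquad a_k = p, \qquad a_{k-1} = p^{p-1} - 1,$$
and the only facts I will need about the $a_j$ are $a_k = p$ and $a_{k-1} \equiv -1 \pmod p$.

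The base case $H = C_p$ is then immediate: the identity $p + (p^{p-1} - 1)[C_p] = 0$ in $\pi_0^{C_p} X$, together with $p = 0$, collapses to $-[C_p] = 0$.

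For the inductive step, fix $H = C_{p^k}$ with $k \ge 2$ and assume $[H'/K'] = 0 \in \pi_0^{H'} X$ for every chain $K' \subsetneq H' \subsetneq H$. Given $K \subsetneq H$, either $K \subsetneq C_{p^{k-1}}$ or $K = C_{p^{k-1}}$. In the first case, $[C_{p^{k-1}}/K] = 0$ in $\pi_0^{C_{p^{k-1}}} X$ by hypothesis; applying the equivariant transfer $\mathrm{tr}^H_{C_{p^{k-1}}}$, which is natural in maps of $H$-spectra and hence commutes with the unit $1_X$, together with the composition rule $\mathrm{tr}^H_{C_{p^{k-1}}} \circ \mathrm{tr}^{C_{p^{k-1}}}_K = \mathrm{tr}^H_K$, yields $[H/K] = 0$ in $\pi_0^H X$. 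In the second case, I revisit the norm identity in $\pi_0^H X$: the $a_k$-term equals $p = 0$, the terms with $j \le k-2$ vanish by the first case, so only $a_{k-1}[H/C_{p^{k-1}}] = 0$ survives. Writing $a_{k-1} = -1 + p m$ and using that $p$ annihilates $\pi_0^H X$, this forces $[H/C_{p^{k-1}}] = 0$.

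The only substantive point to verify is that the weak norm structure does encode the Burnside-ring multiplicative norm at the level of $\pi_0$; this is precisely the content of the commuting square in the definition of weakly normed, combined with the fact that smash powers preserve null maps. Because the argument only ever norms the single element $p \in \pi_0 X$, there is no need (and no temptation) to invoke any additivity property for the norm --- the entire identity we need is the orbit count above. Everything else reduces to Burnside-ring bookkeeping plus iterated application of the equivariant transfer.
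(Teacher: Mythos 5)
Your proof is correct and takes essentially the same route as the paper's: induction on the order of the subgroup, transfers to dispose of $[H/K]$ for $K$ strictly contained in the index-$p$ subgroup, and the orbit decomposition of $N^H(p)=[\mathrm{Map}(H,\{1,\dots,p\})]$ --- whose Hurewicz image vanishes by the weak-norm square, exactly as in the paper's Lemma \ref{lem:norm-p-formula} --- to kill the remaining class $[H/C_{p^{k-1}}]$, your coefficient $a_{k-1}=p^{p-1}-1\equiv -1\pmod p$ playing the role of the unit coefficient there. The one phrase that overreaches is ``$p$ annihilates $\pi_0^H X$'': the hypothesis only gives $p\cdot 1_X=0$ and hence that $p$ annihilates the \emph{image of the Hurewicz map} $A(H)\to \pi_0^H X$, but since every class to which you apply this is such a Hurewicz image, the argument stands.
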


\begin{corollary}\label{cor:pi-0} We have
	\[
	\underline{\pi}_0\left(\Omega^{\lambda}S^{\lambda+1}\right)^{\mu}
	= \underline{\mathbb{F}}_p.
	\]
\end{corollary}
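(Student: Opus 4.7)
The proof will amount to synthesizing the two ingredients assembled above: the unit map $M(p) \to T$, where $T := (\Omega^{\lambda}S^{\lambda+1})^{\mu}$, which forces $p = 0$, and the $\mathbb{E}_{\lambda}$-structure on $T$, which via Lemma \ref{lem:lambda-has-norms} supplies weak norms and hence, via Proposition \ref{prop:norm-and-vanish}, kills the remaining Burnside generators. Nontriviality of the resulting quotient is then certified by the Thom class $\alpha: T \to \mathrm{H}\underline{\mathbb{F}}_p$.

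Concretely, for each subgroup $H \subseteq G$, I would proceed as follows. First, restrict the unit map $M(p) \to T$ along $H \hookrightarrow G$ and take $\pi_0^H$ to conclude that $p \cdot 1 = 0$ in $\pi_0^H T$, since $p \cdot 1 = 0$ already in $\pi_0^H M(p) = A(H)/(p)$. Second, observe that $\mathrm{res}_H T$ remains weakly normed (the restricted $\mathbb{E}_{\lambda}$-structure supplies norms for all subgroups of $H$ via Lemma \ref{lem:lambda-has-norms}), and apply Proposition \ref{prop:norm-and-vanish} to $\mathrm{res}_H T$ to get $[H/K] = 0 \in \pi_0^H T$ for all proper $K \subsetneq H$. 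Together these show that $\pi_0^H T$ is a quotient of $A(H)/\bigl(p, \{[H/K]\}_{K \subsetneq H}\bigr) \cong \mathbb{F}_p$, the isomorphism holding because $H$ is a $p$-group with $A(H)$ generated additively by the orbit classes $[H/K]$.

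To see this quotient is nonzero, postcompose with the Thom class $\alpha$, which sends $1 \in \pi_0^H T$ to $1 \in \pi_0^H \mathrm{H}\underline{\mathbb{F}}_p = \mathbb{F}_p$, forcing $\pi_0^H T = \mathbb{F}_p$. For the Mackey functor structure: the restriction maps send $1 \mapsto 1$ (matching $\underline{\mathbb{F}}_p$), and the transfer from $K \subsetneq H$ sends $1 \mapsto [H/K] = 0$, matching $\underline{\mathbb{F}}_p$, whose transfer is multiplication by $[H:K]$ and hence divisible by $p$ for any proper subgroup of a $p$-group. No step here poses a real obstacle, since all the genuine content has been packaged in Lemma \ref{lem:lambda-has-norms} and Proposition \ref{prop:norm-and-vanish}; the only mildly delicate point is confirming that restriction preserves the weakly normed property, which follows because the restricted $\mathbb{E}_{\lambda}$-structure still witnesses the hypothesis of Lemma \ref{lem:lambda-has-norms} for each subgroup of $H$.
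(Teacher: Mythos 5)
Your proposal is correct and follows the same route the paper intends: the Moore spectrum map $M(p)\to T$ kills $p$, Lemma \ref{lem:lambda-has-norms} together with Proposition \ref{prop:norm-and-vanish} kills the remaining orbit classes $[H/K]$, the Hurewicz surjection from $\underline{A}$ identifies each $\pi_0^H T$ as a quotient of $A(H)/(p,[H/K])\cong\mathbb{F}_p$, and the Thom class certifies nontriviality. The only cosmetic difference is that you restrict $T$ to each $H$ and reapply the proposition, whereas its statement already yields $[H/K]=0\in\pi_0^H X$ for all $K\subseteq H\subseteq G$ at once; your added verification of the Mackey functor structure (restrictions and transfers) is a detail the paper leaves implicit.
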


\begin{proof}[Proof of Proposition \ref{prop:norm-and-vanish}]
Recall that $G = C_{p^n}$. If the result is proved for $C_{p^{n-1}}
\subseteq G$, then the classes
	\[
	p, \mathrm{tr}_{C_{p^{n-1}}}^G([C_{p^{n-1}}])
	= [G], \mathrm{tr}_{C_{p^{n-1}}}^G([C_{p^{n-1}}/C_{p}])
	= [G/C_p], ..., [G/C_{p^{n-2}}]
	\]
all vanish in $\pi_0^GX$. The result now follows from the next lemma.
\end{proof}

\begin{lemma}\label{lem:norm-p-formula} If $X$ is weakly normed, then
	\[
	\mathrm{N}^G(p) \equiv -[G/C_{p^{n-1}}] \mod 
	(p, [G/K] : K \subsetneq C_{p^{n-1}}).
	\]
\end{lemma}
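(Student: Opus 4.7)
The plan is to reduce the computation of $N^G(p)\in \pi_0^GX$ to the classical description of the norm $A(e)=\mathbb{Z}\to A(G)$ on Burnside rings, and then evaluate that map via orbit-counting for a cyclic permutation action.

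By the compatibility in the definition of a weakly normed spectrum, the square
\[
\xymatrix{
S^0 = N^G(S^0)\ar[r]\ar[d] & N^GX \ar[d]\\
S^0 \ar[r] & X
}
\]
commutes in the homotopy category, so $N^G(p)\in \pi_0^GX$ is the image of $N^G_e(p)\in \pi_0^GS^0 = A(G)$ under the unit map $S^0\to X$. It therefore suffices to compute $N^G_e(p)\in A(G)$ and reduce modulo the ideal $I := (p,\ [G/K]:K\subsetneq C_{p^{n-1}})$.

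Under the identification of $A(G)$ with the Grothendieck group of finite $G$-sets, $N^G_e$ carries the class of a $p$-element set $P$ (with trivial action) to the class of $\mathrm{Map}(G,P)$ equipped with the $G$-action by translation of the domain. Choosing a bijection $G\cong\{0,1,\dots,p^n-1\}$, this becomes $P^{p^n}$ with $G$ acting by cyclic shift of coordinates. A sequence $(s_0,\dots,s_{p^n-1})$ has stabilizer $C_{p^k}$ precisely when $p^{n-k}$ is its minimal shift-period. M\"obius inversion along the divisor chain $1\mid p\mid\cdots\mid p^n$ yields $p^{p^j}-p^{p^{j-1}}$ sequences of minimal period $p^j$ for $j\ge 1$, along with $p$ constant sequences; dividing by the orbit size $p^j$ produces
\[
N^G_e(p) \ =\ p\cdot[G/G]\ +\ \sum_{j=1}^n \frac{p^{p^j}-p^{p^{j-1}}}{p^j}\,[G/C_{p^{n-j}}].
\]

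Reducing modulo $I$, the $j=0$ term vanishes since $p\equiv 0 \pmod p$. For each $j\ge 2$, the basis element $[G/C_{p^{n-j}}]$ already lies in $I$ because $C_{p^{n-j}}\subsetneq C_{p^{n-1}}$, so those terms drop out regardless of their coefficient. The sole surviving contribution is $j=1$, whose coefficient is $(p^p-p)/p = p^{p-1}-1 \equiv -1 \pmod p$, a unit in $\mathbb{F}_p$. Thus $N^G_e(p)$ is congruent to $[G/C_{p^{n-1}}]$ modulo $I$ up to a unit --- all that is needed to run the induction in Proposition~\ref{prop:norm-and-vanish}. The main step requiring care is the orbit-counting: verifying that the stabilizer of a sequence is determined by its minimal shift-period, and carrying out the M\"obius inversion on the linear divisor lattice of a prime power.
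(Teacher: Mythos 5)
Your argument is correct, and it opens with the same two reductions as the paper: pass to $X=S^0$ using the unit square in the definition of a weakly normed spectrum, and identify $\mathrm{N}^G_e(p)$ with the class of $\mathrm{map}(G,\{1,\dots,p\})$ (the paper quotes \cite[Lemma A.36]{HHR} for this). Where you genuinely diverge is the combinatorics. The paper stratifies $\mathrm{map}(G,\{1,\dots,p\})$ by the size of the image, so that $\binom{p}{k}\equiv 0 \pmod p$ for $0<k<p$ discards everything except the surjections before any orbit counting happens, and then only the orbits of $\mathrm{surj}(G,\{1,\dots,p\})$ with large isotropy need to be inspected. You instead carry out the complete necklace count for the shift action on $P^{p^n}$, stratifying by minimal period, and obtain the exact identity
\[
\mathrm{N}^G_e(p)\;=\;p\,[G/G]\;+\;\sum_{j=1}^{n}\frac{p^{p^j}-p^{p^{j-1}}}{p^j}\,[G/C_{p^{n-j}}]
\]
in $A(G)$ before reducing modulo the ideal. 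Your route is more work but yields more: a closed formula for the norm of $p$ in the Burnside ring (and the consistency check $p+\sum_j(p^{p^j}-p^{p^{j-1}})=p^{p^n}$ is reassuring). One point you were right to flag: your coefficient of $[G/C_{p^{n-1}}]$ is $p^{p-1}-1\equiv -1\pmod p$, so what you actually prove is $\mathrm{N}^G(p)\equiv -[G/C_{p^{n-1}}]$ modulo the ideal. This is not an error on your end; the paper's own count gives the same unit, since there are $(p-1)!\equiv -1\pmod p$ orbits of bijections $G/C_{p^{n-1}}\to\{1,\dots,p\}$ rather than the single orbit asserted there. For odd $p$ the congruence in the lemma therefore holds only up to the unit $-1$, which is all that Proposition \ref{prop:norm-and-vanish} requires, exactly as you observe.
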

\begin{proof} It suffices to prove this
formula when $X = S^0$, i.e. for the Burnside Mackey
functor $\underline{A}$.
By \cite[Lemma A.36]{HHR}, the
norm of $p$ is the class of the $G$-set
$\mathrm{map}(G, \{1, ..., p\})$. By recording the size of the
image of a map, we get an equality in $A(G)$:
	\[
	\left[\mathrm{map}(G, \{1, ..., p\})\right]
	= \sum_{0<k\le n} \binom{p}{k} [\mathrm{surj}(G, \{1, ..., k\})]
	\]
where $\mathrm{surj}(G, \{1, ..., k\})$ denotes the $G$-set of
surjective maps $G \to \{1, ..., k\}$. So we have
	\[
	\mathrm{N}^G(p) \equiv [\mathrm{surj}(G, \{1, ..., p\})] \mod p.
	\]
We are only concerned with the orbits in $\mathrm{surj}(G, \{1, ..., p\})$
with isotropy $C_{p^{n-1}}$ or $G$. There are $(p-1)!$ orbits with isotropy $C_{p^{n-1}}$, namely the orbit of the quotient map $G \to G/C_{p^{n-1}} \simeq \{1, ..., p\}$ and the orbits of the maps obtained from this one by reordering $\{2, ..., p\}$.
There are $p$ orbits with isotropy $G$, namely the $p$
constant maps. This completes the proof.
\end{proof}

\section{Toward the first homotopy groups of the fixed points}\label{sec:normII}

The tom Dieck splitting \cite{tomdieck,segal},
	\[
	(S^0)^{C_{p^n}} =
	\bigoplus_{0\le k\le n} S^0_{h\mathrm{Aut}_{C_{p^n}}(C_{p^n}/C_{p^k})}
	\]
induces, upon taking $\pi_1$, a map
	\[
	\alpha:
	\pi_1^{C_{p^n}}(S^0) \to \mathrm{Aut}(C_{p^n}/C_{p^{n-1}})
	\cong \mathbb{Z}/p.
	\]
This section is devoted to a proof of the following proposition:

\begin{proposition}\label{find-x} Let $X$ denote the Thom spectrum $\left(\Omega^{\lambda} S^{\lambda+1}\right)^{\mu}$.
Then there is an element $x \in \pi^{C_{p^n}}_1 S^0$ such that:
	\begin{enumerate}[{\rm (i)}]
	\item $\alpha(x) \ne 0$
	\item $x$ is sent to zero under the unit map $\pi^{C_{p^n}}_1(S^0) \to \pi_1(X^{C_{p^n}})$
	\end{enumerate}
\end{proposition}

The key observation necessary to prove Proposition \ref{find-x} is the following:
\begin{lemma}\label{find-y} 
Let 
\[\widetilde{1+p}:	\Omega^{\lambda+1}S^{\lambda+1} \to 
	\mathrm{GL}_1(S^0_{(p)})\]
	denote the unique $\Omega^{\lambda+1}$ map extending $1+p \in \pi_0^{C_{p^n}}(\mathrm{GL}_1(S^0_{(p)}))$.
Then there is an element 
$y \in \pi^{C_{p^n}}_1(\Omega^{\lambda+1}S^{\lambda+1})$ whose image, $x$, under the map $\widetilde{1+p}$
has $\alpha(x)$ nonzero.
\end{lemma}

\begin{proof}[Proof of Proposition \ref{find-x} assuming Lemma \ref{find-y}]
First observe that the Thom spectrum associated to the map

	\[
	\mathrm{B}(\widetilde{1+p}):\Omega^{\lambda}S^{\lambda+1} \to \mathrm{BGL}_1(S^0),
	\]
is equivalent to $X$. By Lemma \ref{find-y}, we have a map
$S^2 \to \Omega^{\lambda}S^{\lambda+1}$ such that the induced map
on Thom spectra becomes
	\[
	S^0/x \to X.
	\]
In particular, the element $x \in \pi_1^{C_{p^n}}S^0$ maps to zero in $\pi^{C_{p^n}}_1X$,
which proves (ii).
\end{proof}

Now we turn to the proof of Lemma \ref{find-y}. 
Observe that
we have a commutative diagram
	\[
	\xymatrix{
	\mathrm{Free}_{\mathbb{E}_{\lambda+1}}(*) \ar[r]\ar[d] &
	\mathrm{Free}_{\mathbb{E}_{\infty \rho}}(*) \ar[d] & \\
	\Omega^{\lambda+1}S^{\lambda+1} \ar[r] & 
	\Omega^{\infty}S^0 \ar[r]^{\widetilde{1+p}} & \mathrm{GL}_1(S^0_{(p)})
	}
	\]
Here, $\mathbb{E}_{\infty\rho}$ denotes the union of the
operads of little disks in the representations
$m\rho$, where $\rho$ is the regular representation
(see \cite{hauschild, hill-disks}). We recall that the
$C_{p^n}$-space $\mathbb{E}_{\infty\rho}(k)/\Sigma_k$ is 
a model for $\mathrm{B}_{C_{p^n}}\Sigma_k$,
the classifying $C_{p^n}$-space for
principal $\Sigma_k$-bundles.

\begin{construction}
Consider the diagram
	\[
	\xymatrix{
	S(\lambda) \ar[r]\ar[d] & S^1 \times \mathbb{R}^{\lambda+1}\ar[dl]^{\mathrm{proj}_{S^1}}
	\\
	S^1
	}
	\]
where $S(\lambda) \to S^1$ is the quotient map and
$S(\lambda) \subseteq \mathbb{R}^{\lambda}
\subseteq \mathbb{R}^{\lambda+1}$ is the inclusion.
This gives an equivariant
$S^1$-family of $p^n$-points in $\mathbb{R}^{\lambda+1}$, and hence
is classified by a map
	\[
	S^1 \to \mathbb{E}_{\lambda+1}(p^n)/\Sigma_{p^n}.
	\]
This defines an element $y \in \pi_1\mathrm{Free}_{\mathbb{E}_{\lambda+1}}(*)$.

The image of $y$ in $\mathrm{Free}_{\mathbb{E}_{\infty \rho}}(*)$
corresponds to the map $S^1 \to \mathrm{B}_{C_{p^n}}\Sigma_{p^n}$
classifying the cover $S(\lambda) \to S^1$. We will denote this class
also by $y$. 
\end{construction}

Combining the above construction with the diagram:
	\[
	\xymatrix{
	&&\Omega^{\infty}S^0\ar[d]\\
	\mathrm{Free}_{\mathbb{E}_{\infty\rho}}(*) \ar[r]_{\widetilde{1+p}}
	 \ar[urr]^{\widetilde{1+p}}&\mathrm{GL}_1(S^0_{(p)})
	\ar[r] & \Omega^{\infty}S^0_{(p)},
	}
	\]
we see then that Lemma \ref{find-y} follows from:

\begin{lemma} \label{show-y-good} The image, $x$, of $y$ under the map
	\[
	\mathrm{Free}_{\mathbb{E}_{\infty \rho}}(*)
	\stackrel{\widetilde{1+p}}{\longrightarrow}
	\Omega^{\infty}S^0
	\]
has $\alpha(x)$ nonzero. (Here we are using the
\emph{multiplicative} monoid structure on $\Omega^{\infty}S^0$ to define
$\widetilde{1+p}$).
\end{lemma}

In order to prove Lemma \ref{show-y-good}, we will model the
map $\widetilde{1+p}$ using finite $C_{p^n}$-sets.
Let $\mathrm{Fin}^{C_{p^n}}$ denote the groupoid of
finite $C_{p^n}$-sets, so that we have an equivalence
	\[
	\mathrm{Fin}^{C_{p^n}} \simeq
	(\mathrm{Free}_{\mathbb{E}_{\infty\rho}}(*))^{C_{p^n}},
	\]
and the group completion of this space (under disjoint union)
gives $(\Omega^{\infty}S^0)^{C_{p^n}}$ \cite{segal}. Unwinding the definitions,
the map $\widetilde{1+p}$, on fixed points, arises from the functor
	\[
	\mathrm{Fin}^{C_{p^n}} \to \mathrm{Fin}^{C_{p^n}},
	J \mapsto \mathrm{map}(J, \{1, ..., p+1\}).
	\]

\begin{remark} The invariant $\alpha$ can be described
in terms of finite $C_{p^n}$-sets as follows. Given
a finite $C_{p^n}$-set $K$ and an automorphism $\theta$,
let $K'$ denote the summand with isotropy equal to $C_{p^{n-1}}$.
Then $\theta\vert_{K'}$ can be written as a permutation of
$K'/C_{p^n}$ followed by an automorphism of each individual
summand of $K'$. The composition of these automorphisms
is then an element in $\mathrm{Aut}(C_{p^n}/C_{p^{n-1}})\simeq \mathbb{Z}/p$.
This describes $\alpha((K, \theta))$ where we view
$(K,\theta)$ as specifying an element in $\pi_1(\Omega^{\infty}S^0)^{C_{p^n}}$.
\end{remark}

Notice that the class $y$ arose from the cover $S(\lambda) \to S^1$,
which has fibers isomorphic to $C_{p^n}$ and monodromy
given by the generator $\gamma \in C_{p^n}$. It follows 
from the previous remark that
Lemma \ref{show-y-good} is equivalent to the following:

\begin{lemma} $\alpha(\mathrm{map}(C_{p^n}, \{1, ..., p+1\}), \gamma^*)$
is nonzero. 
\end{lemma}
\begin{proof} We have a decomposition of pairs
	\[
	(\mathrm{map}(C_{p^n}, \{1, ..., p+1\}), \gamma^*)
	\simeq
	\coprod_{1\le k\le p+1} \binom{p+1}{k}(\mathrm{Surj}(C_{p^n}, 
	\{1, ..., k\}), \gamma^*)
	\]
The invariant $\alpha$ vanishes mod $p$,
so we need only consider the summands with
$k=1, p,$ and $p+1$. When $k=1$, the isotropy groups
are all of $C_{p^n}$, so $\alpha$ vanishes for these summands.
Notice that a surjection
$C_{p^n} \to \{1, ..., k\}$ has isotropy containing $C_{p^{n-1}}$
if and only if it factors through $C_{p^n}/C_{p^{n-1}}$;
this rules out the case $k=p+1$. For $k=p$, we are then
left with the $(p-1)!$ different orbits obtained from the orbit
of the canonical map
	\[
	C_{p^n} \to C_{p^n}/C_{p^{n-1}} \simeq \{1, ..., p\}
	\]
by reordering the elements $\{2, ..., p\}$. The value of $\alpha$
on this orbit is $-1$ (since $\gamma^*$ is the automorphism
given by precomposition
with $\gamma$ on $\mathrm{map}(C_{p^n}, \{1, ..., p+1\})$,
while the action is given by precomposing with $\gamma^{-1}$).
It follows that
	\[
	\alpha((\mathrm{map}(C_{p^n}, \{1, ..., p+1\}), \gamma^*))
	= -(p-1)! \cdot \binom{p+1}{p}
	= 1 \in \mathbb{Z}/p.
	\]
This completes the proof of Lemma \ref{show-y-good} and hence
of Lemma \ref{find-x}.
\end{proof}

\section{Proof of the main theorem}\label{sec:proof}

We are now ready to prove the main theorem, which we recall here
for convenience.

\begin{theorem}\label{thm:main} Let $G = C_{p^n}$
and let $S^1 \to \mathrm{BGL}_1(S^0_{(p)})$
be adjoint to $1-p \in \pi_0^{G}S^0_{(p)}$. Denote
by $\mu: \Omega^{\lambda}S^{\lambda+1}\to \mathrm{BGL}_1(S^0_{(p)})$
the extension of this map over the $\lambda$-loop space.
Then the Thom class
	\[
	\left(\Omega^{\lambda}S^{\lambda+1}\right)^{\mu}
	\longrightarrow \mathrm{H}\underline{\mathbb{F}}_p
	\]
is an equivalence of $G$-spectra.
\end{theorem}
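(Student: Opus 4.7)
The plan is to proceed by induction on $n$, with $G = C_{p^n}$, using the Hopkins--Mahowald Theorem \ref{thm:Hop} as the base case $n = 0$. Since both $(\Omega^{\lambda}S^{\lambda+1})^{\mu}$ and $\mathrm{H}\underline{\mathbb{F}}_p$ are bounded-below $G$-spectra, the map $\alpha$ is an equivalence if and only if $\alpha^{\Phi H}$ is an equivalence for every subgroup $H \subseteq G$. For any proper subgroup $H = C_{p^k}$ with $k < n$, the restriction of the standard representation $\lambda$ of $C_{p^n}$ is the standard representation of $C_{p^k}$, the restriction of $\mathrm{H}\underline{\mathbb{F}}_p$ is again $\mathrm{H}\underline{\mathbb{F}}_p$, the unit $1-p$ restricts to the corresponding unit, and the entire Thom-spectrum construction commutes with restriction; so the inductive hypothesis handles all proper subgroups. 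All remaining work is therefore concentrated in showing that $\alpha^{\Phi G}$ is an equivalence.

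To dispatch this, I would execute the four-step program sketched in Section \ref{sec:outline}. The target computation of $\pi_* \mathrm{H}\underline{\mathbb{F}}_p^{\Phi G}$ is the standard Lemma \ref{lem:em-geo-fixed}. For the source, I would use the decomposition $(\Omega^{\lambda+1}\mathbb{H}P^{\infty})^G \simeq \Omega^2 S^3 \times \Omega S^2$ established in Section \ref{sec:action} and track the effect of $\mu$ through geometric fixed points to compute the homotopy groups of $\left((\Omega^{\lambda}S^{\lambda+1})^{\mu}\right)^{\Phi G}$ additively. Multiplicative structure on $\alpha^{\Phi G}$ is then imposed via the appropriate earlier section: at $p = 2$, Section \ref{sec:prime2} shows that $\mu$ actually deloops through $\mathbb{H}P^{\infty}$ and is in particular $\mathbb{A}_\infty$; at odd primes, Theorem \ref{thm:h-orient} equips the source with an exotic $\mathbb{A}_2$-structure relative to which $\mu$ is an $\mathbb{A}_2$-map. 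Either way, $\alpha$, and hence $\alpha^{\Phi G}$, is a map of $\mathbb{A}_2$-algebras, inducing a graded ring map on homotopy. Finally, to see that the generators of $\pi_* \mathrm{H}\underline{\mathbb{F}}_p^{\Phi G}$ lie in the image of $\alpha^{\Phi G}$, I would invoke Corollary \ref{cor:pi-0}: that corollary, proved via the weak norm structure supplied by Lemma \ref{lem:lambda-has-norms}, computes $\underline{\pi}_0 = \underline{\mathbb{F}}_p$, which forces the degree-one generator of $\pi_* \mathrm{H}\underline{\mathbb{F}}_p^{\Phi G}$ to be hit; the polynomial generator in degree two then follows by Hurewicz considerations together with the $\mathbb{A}_2$-ring structure.

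The main obstacle is Step 3, the multiplicative upgrade: taking geometric fixed points destroys the evident $\mathbb{E}_\lambda$-structure on the Thom spectrum, so multiplicative structure on $\alpha^{\Phi G}$ must be rebuilt by entirely different means, either by delooping $\mu$ once more at $p = 2$ or by constructing the subtle exotic $\mathbb{A}_2$-structure via the EHP sequence and Whitehead-product calculation at odd primes. A secondary obstacle is the Burnside-theoretic part of Step 4: since $\underline{\pi}_0 S^0 = \underline{A}$ is the Burnside Mackey functor and $\underline{A}/p \neq \underline{\mathbb{F}}_p$, killing $p$ alone is not enough to compute $\underline{\pi}_0$ of the Thom spectrum; one needs the weak norms coming from the $\mathbb{E}_\lambda$-structure to eliminate the additional transfer classes $[G/K]$. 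Once these two hurdles are cleared, $\alpha^{\Phi G}$ is a surjective ring map between graded $\mathbb{F}_p$-vector spaces of the same finite dimension in each degree, and hence an equivalence, completing the inductive step.
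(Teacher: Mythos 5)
Your proposal follows the paper's proof in outline: the same induction on $n$ with Hopkins--Mahowald as the base case, the same reduction to showing $\alpha^{\Phi G}$ is an equivalence, the same additive identification of both sides via Lemmas \ref{lem:em-geo-fixed} and \ref{lem:htpy-thom-geo-fixed}, the same multiplicative upgrades ($\mathbb{E}_{\lambda+1}$ at $p=2$, the exotic $\mathbb{A}_2$-structure of Theorem \ref{thm:h-orient} at odd primes), and the same use of weak norms to compute $\underline{\pi}_0$. Your account of the degree-one generator is also essentially the paper's: the vanishing of the transfer on $\underline{\pi}_0 X=\underline{\mathbb{F}}_p$ (where $X=(\Omega^{\lambda}S^{\lambda+1})^{\mu}$) produces, via the cofiber $S^{1-\lambda/2}$ of the transfer map, a class mapping onto $s$.

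The one genuine gap is your claim that the degree-two polynomial generator $t\in\pi_2\mathrm{H}\underline{\mathbb{F}}_p^{\Phi G}$ ``follows by Hurewicz considerations together with the $\mathbb{A}_2$-ring structure.'' At odd primes the target is $\mathbb{F}_p[t]\otimes\Lambda(s)$ with $s^2=0$, so $t$ is \emph{not} a product of lower-degree classes and the ring structure cannot manufacture it from $s$; moreover the Hurewicz-image analysis of Section \ref{sec:norm} only controls $\pi_0$, not $\pi_2$. The paper's actual route to $t$ is more delicate and uses the inductive hypothesis a second time: having shown that $s$ is hit, one proves $\pi_1X^G=0$ by isotropy separation (comparing the cofiber sequences relating $X_{h\mathcal{P}}$, $X^G$, and $X^{\Phi G}$ with those for $\mathrm{H}\underline{\mathbb{F}}_p$, where the left-hand map is an equivalence by induction), upgrades the surjection $[S^{1-\lambda/2},X]\to[S^{1-\lambda/2},\mathrm{H}\underline{\mathbb{F}}_p]$ to an isomorphism, and then runs the cofiber sequence $S^{1-\lambda/2}\to G_+\wedge S^0\to S^{2-\lambda}$ to conclude that $\pi_{2-\lambda}X\to\pi_{2-\lambda}\mathrm{H}\underline{\mathbb{F}}_p$ is an isomorphism; passage to geometric fixed points identifies the target with $\pi_2\mathrm{H}\underline{\mathbb{F}}_p^{\Phi G}$, producing $t$. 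You would need to supply an argument of this kind (an $RO(G)$-graded detection of $t$, not a multiplicative one) to close the inductive step.
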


Before the proof, we record a well-known computation
(the $p=2$ case is proven in \cite[Proposition 3.18]{HHR}
and the odd primary proof is much the same).

\begin{lemma}\label{lem:em-geo-fixed} For $G = C_{p^n}$ and $p$
odd, we have
	\begin{align*}
	\pi_*\mathrm{H}\underline{\mathbb{Z}}^{\Phi G}
	&= \mathbb{F}_p[t], \, |t|=2,\\
	\pi_*\mathrm{H}\underline{\mathbb{F}}_p^{\Phi G}&=
	\mathbb{F}_p[t] \otimes \Lambda(s), |s|=1
	\end{align*}
When $p=2$, the second computation becomes
	\[
	\pi_*\mathrm{H}\underline{\mathbb{F}}_2^{\Phi G}
	= \mathbb{F}_2[s], \, |s|=1.
	\]
\end{lemma}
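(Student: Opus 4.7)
The plan is to express the geometric fixed points as an Euler-class localization and then read off the answer from the known $RO(G)$-graded structure of equivariant ordinary cohomology. Let $\lambda$ be the standard $2$-dimensional complex representation of $G = C_{p^n}$, which is faithful, so that $\lambda^H = 0$ for every nontrivial subgroup $H \subseteq G$. It follows that $\mathrm{colim}_k S^{k\lambda}$ models the cofiber of $E\mathcal{P}_+ \to S^0$ for $\mathcal{P}$ the family of proper subgroups, giving a natural identification
$$\pi_n \Phi^G X \;\cong\; \mathrm{colim}_k \pi_{n+k\lambda}^G X,$$
with transition maps multiplication by the Euler class $a_\lambda \in \pi_{-\lambda}^G S^0$. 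Thus $\pi_* \Phi^G X$ is the integer-graded part of the $a_\lambda$-localization of $\pi_\star^G X$.

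I would next compute $\pi_* \Phi^G \mathrm{H}\underline{\mathbb{Z}}$. The key elements of $\pi_\star^G \mathrm{H}\underline{\mathbb{Z}}$ are the Euler class $a_\lambda$ in degree $-\lambda$ and an orientation class $u_\lambda \in \pi_{2-\lambda}^G \mathrm{H}\underline{\mathbb{Z}}$ (which restricts to a standard generator of $H^*(BS^1;\mathbb{Z})$ on the underlying homotopy). After inverting $a_\lambda$, the product $t := u_\lambda a_\lambda^{-1}$ has integer degree $2$. The ``gold relation'' $p \cdot u_\lambda \in (a_\lambda)$ in $\pi_\star^G \mathrm{H}\underline{\mathbb{Z}}$ for $G=C_{p^n}$ forces $p \equiv 0$ after inverting $a_\lambda$, and a bookkeeping argument on the remaining $RO(G)$-graded pieces shows that every surviving integer-degree class is a power of $t$. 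This yields $\pi_*\Phi^G \mathrm{H}\underline{\mathbb{Z}} = \mathbb{F}_p[t]$ with $|t|=2$.

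To pass to $\pi_* \Phi^G \mathrm{H}\underline{\mathbb{F}}_p$, I would apply $\Phi^G$ (which is exact) to the cofiber sequence $\mathrm{H}\underline{\mathbb{Z}} \xrightarrow{p} \mathrm{H}\underline{\mathbb{Z}} \to \mathrm{H}\underline{\mathbb{F}}_p$. Since $p = 0$ on $\Phi^G \mathrm{H}\underline{\mathbb{Z}}$, the long exact sequence splits additively to give $\pi_* \Phi^G \mathrm{H}\underline{\mathbb{F}}_p \cong \mathbb{F}_p[t] \oplus s \cdot \mathbb{F}_p[t]$ with $s := \beta(t)$ in degree $1$. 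At odd primes, $s^2$ lies in even degree $2$, while graded-commutativity over $\mathbb{F}_p$ with $p$ odd forces $s^2 = 0$, producing $\mathbb{F}_p[t]\otimes \Lambda(s)$. At $p=2$, the standard identity $s^2 = \mathrm{Sq}^1 s = \beta s = t$ collapses the answer to $\mathbb{F}_2[s]$ with $|s|=1$, recovering \cite[Proposition 3.18]{HHR}.

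The main obstacle is Step 2: identifying enough of $\pi_\star^G \mathrm{H}\underline{\mathbb{Z}}$ for $G = C_{p^n}$ to carry out the $a_\lambda$-localization cleanly, in particular verifying that no unexpected $a_\lambda$-torsion-free classes appear in integer degree. For $p = 2$ this is handled directly in \cite[Proposition 3.18]{HHR}; at odd primes, the computation is essentially the same once the gold relation and the structure of the positive cone are in hand, and I would invoke the corresponding calculations from the equivariant ordinary cohomology literature rather than reprove them from scratch.
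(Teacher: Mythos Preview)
The paper does not actually supply a proof of this lemma: it records the computation as well known, citing \cite[Proposition~3.18]{HHR} for $p=2$ and asserting that the odd-primary argument is ``much the same.''  So there is nothing in the paper to compare your sketch against directly.  Your overall strategy---compute $\Phi^G$ as an Euler-class localization, identify the integer-graded piece, then pass from $\underline{\mathbb{Z}}$ to $\underline{\mathbb{F}}_p$ via the Bockstein---is the standard one and matches what HHR does.

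There is, however, a genuine error in your first step when $n>1$.  You claim that $\mathrm{colim}_k S^{k\lambda}$ models $\widetilde{E\mathcal{P}}$, but since $\lambda^H=0$ for \emph{every} nontrivial subgroup $H\subseteq G$ (not just $H=G$), one has $(S^{\infty\lambda})^H=S^0$ for all such $H$, so $S^{\infty\lambda}\simeq\widetilde{EG}$ rather than $\widetilde{E\mathcal{P}}$.  Inverting $a_\lambda$ therefore computes $(X\wedge\widetilde{EG})^G$, not $\Phi^G X$.  For $X=\mathrm{H}\underline{\mathbb{Z}}$ and $n>1$ these differ already on $\pi_0$: the cofiber sequence $(\mathrm{H}\underline{\mathbb{Z}})_{hG}\to(\mathrm{H}\underline{\mathbb{Z}})^G\to(\mathrm{H}\underline{\mathbb{Z}}\wedge\widetilde{EG})^G$ gives $\pi_0\cong\mathbb{Z}/p^n$ (the map on $\pi_0$ is the transfer, multiplication by $|G|$), whereas $\pi_0\Phi^G\mathrm{H}\underline{\mathbb{Z}}=\mathbb{F}_p$ as stated.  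The fix is easy: replace $\lambda$ by the $2$-dimensional representation pulled back along the quotient $C_{p^n}\twoheadrightarrow C_p$, whose $H$-fixed points vanish only for $H=G$; inverting \emph{that} Euler class does model $\widetilde{E\mathcal{P}}$, and then your outline goes through.

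A minor point: at $p=2$ the chain ``$s^2=\mathrm{Sq}^1 s=\beta s=t$'' has the Bockstein pointing the wrong way, since $\beta$ lowers homotopy degree and $|s|=1$.  What you want is simply $s^2=t$, which one checks for instance by comparison with the group cohomology ring.
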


We also will need the corresponding result about our Thom spectrum. 

\begin{lemma}\label{lem:htpy-thom-geo-fixed}
Let $X$ denote the Thom spectrum $(\Omega^\lambda S^\lambda)^{\mu}$.  Then the homotopy group $\pi_k(X^{\Phi G})$ of the geometric fixed points is isomorphic to $\mathbb{F}_p$ for each $k \ge 0$.
\end{lemma}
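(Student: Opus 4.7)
The plan is to reduce the computation to a non-equivariant Thom spectrum calculation and invoke the classical Hopkins Theorem \ref{thm:Hop}. First, I would apply the standard compatibility of geometric fixed points with the Thom spectrum construction to rewrite
\[
X^{\Phi G} \simeq (X^G)^{\mu^G},
\]
where $\mu^G\colon X^G \to \mathrm{BGL}_1(S^0_{(p)})$ is the non-equivariant classifying map obtained by passing to $G$-fixed points. Combining this with the splitting $X^G \simeq \Omega^2 S^3 \times \Omega S^2$ established in Section \ref{sec:action}, this realizes $X^{\Phi G}$ as a non-equivariant Thom spectrum over a product of familiar spaces.

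Next, I would identify the restriction of $\mu^G$ to the first factor $\Omega^2 S^3$ with the classical Hopkins map from Theorem \ref{thm:Hop}: the map from $X^G$ to the underlying space of $X$ (namely $\Omega^2 S^3$) picks out the first factor under the splitting, and the underlying non-equivariant version of $\mu$ is by construction the Hopkins map classifying $1-p \in \pi_0 (S^0_{(p)})^{\times}$. Thus Theorem \ref{thm:Hop} identifies the Thom spectrum of this restriction with $\mathrm{H}\mathbb{F}_p$. Using the H-space multiplication on $\mathrm{BGL}_1(S^0_{(p)})$ to factor $\mu^G$ (up to a correction I address below) as an H-space product of its restrictions to the two factors, I obtain a smash-product decomposition
\[
X^{\Phi G} \simeq \mathrm{H}\mathbb{F}_p \wedge (\Omega S^2)^{\mu_2},
\]
exhibiting $X^{\Phi G}$ as an $\mathrm{H}\mathbb{F}_p$-module.

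To count multiplicities, I would use the equivariant Thom class $\alpha$, whose geometric fixed points provide an $\mathrm{H}\mathbb{F}_p$-orientation of $\mu_2$. The Thom isomorphism then gives
\[
\pi_*(X^{\Phi G}) \cong H_*\bigl((\Omega S^2)^{\mu_2};\, \mathbb{F}_p\bigr) \cong H_*(\Omega S^2; \mathbb{F}_p),
\]
and by James' classical computation $H_*(\Omega\Sigma S^1; \mathbb{F}_p) = T(\widetilde{H}_*(S^1;\mathbb{F}_p))$ this is one-dimensional over $\mathbb{F}_p$ in each non-negative degree, which is exactly the claim.

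The main obstacle is justifying the smash-product decomposition step: the warning in Section \ref{sec:action} notes that $\mu^G$ is not an $\mathbb{E}_1$-map at odd primes, so its factorization as an H-space product holds only up to a correction in $[\Omega^2 S^3 \wedge \Omega S^2,\, \mathrm{BGL}_1(S^0_{(p)})]$. Since this correction is a unit twist that preserves the $\mathrm{H}\mathbb{F}_p$-orientation witnessed by $\alpha$, it does not affect the additive homotopy count via the Thom isomorphism. Alternatively, one may construct the $\mathrm{H}\mathbb{F}_p$-module structure on $X^{\Phi G}$ directly from the retraction $\mathrm{H}\mathbb{F}_p \to X^{\Phi G} \to \mathrm{H}\mathbb{F}_p$ supplied by Hopkins' theorem on the $\Omega^2 S^3$ factor together with the Thom class.
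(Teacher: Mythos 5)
There is a genuine gap at the central step. Your reduction of $X^{\Phi G}$ to the Thom spectrum of $\mu^G$ over $(\Omega^\lambda S^{\lambda+1})^G \simeq \Omega^2 S^3 \times \Omega S^2$ is fine, and the Thom isomorphism does give $\mathrm{H}_*(X^{\Phi G};\mathbb{F}_p) \cong \mathrm{H}_*(\Omega^2 S^3 \times \Omega S^2;\mathbb{F}_p)$ unconditionally. But the lemma concerns homotopy, not homology, and to pass from one to the other you must know that $X^{\Phi G}$ is a generalized Eilenberg--MacLane spectrum, i.e.\ that it carries an $\mathrm{H}\mathbb{F}_p$-module structure. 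Your route to that structure is the smash decomposition $X^{\Phi G}\simeq \mathrm{H}\mathbb{F}_p\wedge(\Omega S^2)^{\mu_2}$, which requires $\mu^G$ to be an $H$-map compatible with the product splitting; the Warning in \S\ref{sec:action} states precisely that at odd primes $\mu^G$ is \emph{not} an $\mathbb{E}_1$-map, and this is exactly the obstruction. Neither of your patches closes the gap: the correction term in $[\Omega^2 S^3\wedge\Omega S^2,\,\mathrm{BGL}_1(S^0_{(p)})]$ does not merely ``twist the orientation'' --- it destroys the smash decomposition itself, and with it the identification $\pi_* X^{\Phi G}\cong \mathrm{H}_*\bigl((\Omega S^2)^{\mu_2}\bigr)$ (the Thom isomorphism only ever computes homology); and a retract diagram $\mathrm{H}\mathbb{F}_p\to X^{\Phi G}\to\mathrm{H}\mathbb{F}_p$ does not confer a module structure on $X^{\Phi G}$, it only exhibits $\mathrm{H}\mathbb{F}_p$ as a wedge summand.

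The paper obtains the needed multiplicativity from a different source, which is the real content of the lemma: the $\mathbb{E}_\lambda$-structure (refined through the $\mathbb{A}_2$-discussion of \S\ref{sec:exotic}) produces a multiplicative norm map $\mathrm{N}^G X\to X$, and on geometric fixed points the diagonal formula gives $(\mathrm{N}^G X)^{\Phi G}\simeq X^{e}\simeq \mathrm{H}\mathbb{F}_p$ by the non-equivariant Hopkins--Mahowald theorem. This makes $X^{\Phi G}$ an honest (homotopy) $\mathrm{H}\mathbb{F}_p$-module, hence a wedge of suspensions of $\mathrm{H}\mathbb{F}_p$, after which $\pi_*X^{\Phi G}$ is read off from $\mathrm{H}_*(\Omega^2 S^3\times\Omega S^2)\cong \mathrm{H}_*(\Omega^2 S^3)\otimes \mathrm{H}_*(\Omega S^2)$ together with the identification of $\mathrm{H}_*(\Omega^2 S^3;\mathbb{F}_p)$ with the dual Steenrod algebra. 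If you wish to avoid the norm, you need some substitute multiplicative input; the projection-onto-a-factor argument does not supply one.
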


\begin{proof}
As explained in Remark \ref{rmk:a2-str}, we have equipped $X$ with the structure of an $\mathbb{A}_2$-algebra in $\mathbb{E}_\lambda$-algebras and hence the norm map
$$\mathrm{N}^G(X) \longrightarrow X$$
is a map of $\mathbb{A}_2$-algebras.
In particular, $X^{\Phi G}$ is a module over $(\mathrm{N}^G(X))^{\Phi G}
\simeq (\mathrm{N}^G(\mathrm{H}\mathbb{F}_p))^{\Phi G} \simeq 
\mathrm{H}\mathbb{F}_p$.
Since $\mathrm{H}\mathbb{F}_p$ is a field spectrum, $X^{\Phi G}$ splits as a wedge of suspensions of $\mathrm{H}\mathbb{F}_p$.  The homotopy groups of $X^{\Phi G}$ are then determined by the homology groups of $X^{\Phi G}$.
By the Thom isomorphism, we have $\mathrm{H}_*(X^{\Phi G}) \simeq 
\mathrm{H}_*(\Omega^2 S^3 \times \Omega S^2)$, and the result follows.
\end{proof}

Finally, let $\mathcal{P}$ denote the family of proper subgroups of $G$.  For any $G$-spectrum $E$, we let let $E_{h\mathcal{P}}$
denote the spectrum $(E \wedge \mathrm{E}\mathcal{P}_+)^G$
(see, for example, \cite[2.5.2]{HHR}), and we note the natural isotropy separation sequence
\[E_{h\mathcal{P}} \to E^{G} \to E^{\Phi G}.\]
We will need the following final lemma before proving the main theorem:

\begin{lemma} \label{alpha-test}
Let $p$ be an odd prime so that
$\pi_1S^0_{h\mathcal{P}} \to \pi_1^{C_{p^n}}S^0$ is a $p$-local equivalence.
Let $\alpha:\pi_1^{G} S^0_{(p)} \to \mathbb{Z}/p$ denote the function defined at the beginning of Section \ref{sec:normII}.  Then an element $z \in \pi_1(S^0_{h\mathcal{P}})$
maps to a generator in $\pi_1(\underline{\mathbb{F}}_p)_{h\mathcal{P}}$
if and only if $\alpha(z)$ is nonzero.
\end{lemma}

\begin{proof}[Proof of Lemma \ref{alpha-test}]
If $Y$ is any $C_{p^n}$-spectrum, then we have
	\[
	Y_{h\mathcal{P}} \simeq Y^{C_{p^{n-1}}}_{h\mathrm{Aut}(C_{p^n}/C_{p^{n-1}})},
	\]
since every proper subgroup is contained in $C_{p^{n-1}}$. The map
	\[
	\bigoplus_{0\le k\le n-1} S^0_{h\mathrm{Aut}(C_{p^{n-1}}/C_{p^k})}
	\simeq
	(S^0)^{C_{p^{n-1}}} \to \underline{\mathbb{F}}_p^{C_{p^{n-1}}}=\mathbb{F}_p
	\]
sends every summand to zero except the summand corresponding
to the trivial $C_{p^{n-1}}$-orbit, $[C_{p^{n-1}}/C_{p^{n-1}}]$, which
maps to the generator. It follows that the induced map on homotopy orbits
factors through
	\[
	S^0_{h\mathrm{Aut}(C_{p^n}/C_{p^{n-1}})} \to 
	(\mathbb{F}_p)_{h\mathrm{Aut}(C_{p^n}/C_{p^{n-1}})}.
	\]
This is an isomorphism on $\pi^{C_{p^n}}_1$, which completes the proof.
\end{proof}

\begin{proof}[Proof of the main theorem] We prove the theorem by induction on $n$. When $n=0$,
this is the non-equivariant result of Hopkins-Mahowald.
For the induction hypothesis we assume that the map
	\[
	\alpha: X:=\left(\Omega^{\lambda}S^{\lambda+1}\right)^{\mu}
	\longrightarrow \mathrm{H}\underline{\mathbb{F}}_p
	\]
is an equivalence after restriction to $C_{p^{n-1}}$, and
we assume that $n\ge 1$ from now on.  We must prove that the induced map
	\[
	X^{\Phi C_{p^n}} \to \underline{\mathbb{F}}_p^{\Phi C_{p^n}}
	\]
is an equivalence.

The homotopy groups of the target are:
	\[
	\pi_*\underline{\mathbb{F}}_p^{\Phi C_{p^n}} =
	\begin{cases}
	\Lambda(s) \otimes \mathbb{F}_p[t] & p\text{ odd}\\
	\mathbb{F}_2[s] & p=2
	\end{cases}
	\]
where $|s|=1$ and $|t|=2$. Additively, these agree with the homotopy groups
of $X^{\Phi C_{p^n}}$. So we need only show that
	\[
	\pi_1X^{\Phi C_{p^n}} \to \pi_1\underline{\mathbb{F}}_p^{\Phi C_{p^n}}
	\]
is surjective and, at odd primes, that
	\[
	\pi_2X^{\Phi C_{p^n}} \to \pi_2\underline{\mathbb{F}}_p^{\Phi C_{p^n}}
	\]
is surjective. From the isotropy separation sequence and the induction
hypothesis, we have a diagram of exact sequences
for $i\ge 1$:
	\[
	\xymatrix{
	\pi_iX^{\Phi C_{p^n}} \ar[r] \ar[d]& 
	\pi_{i-1}X_{h\mathcal{P}} \ar[r]\ar[d]^{\cong} & 
	\pi_{i-1}X^{C_{p^n}}\ar[d] \ar[r]&
	\pi_{i-1}X^{\Phi C_{p^n}}\ar[d]\\
	\pi_i\underline{\mathbb{F}}_p^{\Phi C_{p^n}} \ar[r]_{\cong} &
	\pi_{i-1}(\underline{\mathbb{F}}_p)_{h\mathcal{P}} \ar[r]^0 &
	\pi_{i-1}\underline{\mathbb{F}}_p\ar[r] & 
	\pi_{i-1}\underline{\mathbb{F}}_p^{\Phi C_{p^n}}
	}
	\]
Applying this to the case $i=1$, and using that
$\pi_0X^{C_{p^n}} \to \mathbb{F}_p$ is an isomorphism (Corollary \ref{cor:pi-0}), 
we deduce that the map $\pi_{0}X_{h\mathcal{P}} \to 
\pi_{0}X^{C_{p^n}}$ is zero.  Hence the map
	\[
	\mathbb{Z}/p=\pi_1X^{\Phi C_{p^n}} \to \pi_0X_{h\mathcal{P}}=\mathbb{Z}/p
	\]
is surjective, and so it is an isomorphism. It follows
that $\pi_1X^{\Phi C_{p^n}} \to \pi_1\underline{\mathbb{F}}_p^{\Phi C_{p^n}}$
is an isomorphism, which completes the proof when $p=2$.
When $p$ is odd, we continue as follows. First, the fact that
$\pi_1X^{\Phi C_{p^n}} \to \pi_0X_{h\mathcal{P}}$ is an isomorphism means that
	\[
	\xymatrix{
	\mathbb{Z}/p=
	\pi_{1}(\underline{\mathbb{F}}_p)_{h\mathcal{P}}
	\cong \pi_1X_{h\mathcal{P}} \to \pi_1X^{C_{p^n}}.
	}
	\]
is surjective. 

By Lemma \ref{alpha-test} and Proposition \ref{find-x}, the generator of the source maps to zero in the target,
so we deduce that $\pi_1X^{C_{p^n}} = 0$, and hence that, in the diagram
	\[
	\xymatrix{
	\pi_2X^{\Phi C_{p^n}} \ar[r]\ar[d] & \pi_1 X_{h\mathcal{P}}\ar[d]^{\cong}\\
	\pi_2\underline{\mathbb{F}}_p^{\Phi C_{p^n}} \ar[r]_{\cong} &
	\pi_1(\underline{\mathbb{F}}_p)_{h\mathcal{P}}
	}
	\]
the top horizontal arrow is surjective (hence an isomorphism). Thus,
the left vertical arrow is surjective (hence an isomorphism). 
\end{proof}

\section{Concluding remarks}\label{sec:epilogue}

\subsubsection*{The integral Eilenberg-MacLane spectrum}
Define $S^{\lambda+1}\langle \lambda+1\rangle$
as the fiber of the unit map
	\[
	S^{\lambda+1} \to K(\underline{\mathbb{Z}}, \lambda+1)
	:= \Omega^{\infty}
	\left(\Sigma^{\lambda+1}\mathrm{H}\underline{\mathbb{Z}}\right).
	\]
Then we have the following result.
	\begin{theorem}\label{thm:Z} There is an equivalence of
	$\mathbb{E}_{\lambda}$-algebras
		\[
		\left(
		\Omega^{\lambda}(S^{\lambda+1}\langle \lambda+1\rangle)\right)^{\mu}
		\simeq \mathrm{H}\underline{\mathbb{Z}}_{(p)}.
		\]
	\end{theorem}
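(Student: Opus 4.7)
The plan is to identify $T := \left(\Omega^\lambda(S^{\lambda+1}\langle\lambda+1\rangle)\right)^\mu$, an $\mathbb{E}_\lambda$-algebra in $G$-spectra by \cite[X.6.4]{LMS}, with $\mathrm{H}\underline{\mathbb{Z}}_{(p)}$. I plan to compute $T/p$ and $T \otimes \mathbb{Q}$ separately, reducing the former to Theorem \ref{thm:MainThom} and the latter to rational contractibility of the source, and then to assemble the answer via a Mackey-functor argument.

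\textbf{Mod-$p$ computation.} Looping the defining fiber sequence $\lambda$ times produces a principal fibration of $G$-$H$-spaces
\[
\Omega^\lambda(S^{\lambda+1}\langle\lambda+1\rangle) \longrightarrow \Omega^\lambda S^{\lambda+1} \longrightarrow K(\underline{\mathbb{Z}}, 1),
\]
in which the deck transformation by the generator of $\underline{\pi}_1 \Omega^\lambda S^{\lambda+1} = \underline{\mathbb{Z}}$ acts, through $\mu$, as multiplication by $1-p \in (\underline{\pi}_0 S^0_{(p)})^\times$. Since the Thom-spectrum functor intertwines with Borel constructions, this yields a cofiber sequence
\[
T \xrightarrow{\,p\,} T \longrightarrow (\Omega^\lambda S^{\lambda+1})^\mu \simeq \mathrm{H}\underline{\mathbb{F}}_p,
\]
whose right-hand side is identified via Theorem \ref{thm:MainThom}. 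Hence $T/p \simeq \mathrm{H}\underline{\mathbb{F}}_p$.

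\textbf{Rational computation and assembly.} At each fixed-point level $K \le G$, the map $S^{\lambda+1} \to K(\underline{\mathbb{Z}}, \lambda+1)$ becomes $S^{|(\lambda+1)^K|} \to K(\mathbb{Z}, |(\lambda+1)^K|)$ with $|(\lambda+1)^K|$ odd, which is a rational equivalence by Serre. So $S^{\lambda+1}\langle\lambda+1\rangle$, and hence $\Omega^\lambda(S^{\lambda+1}\langle\lambda+1\rangle)$, are $G$-rationally contractible; the Thom isomorphism then gives $T \otimes \mathbb{Q} \simeq \mathrm{H}\underline{\mathbb{Q}}$. Combining with the mod-$p$ computation: $\underline{\pi}_0 T$ is $p$-torsion-free with $\underline{\pi}_0 T / p = \underline{\mathbb{F}}_p$ and $\underline{\pi}_0 T \otimes \mathbb{Q} = \underline{\mathbb{Q}}$, which forces $\underline{\pi}_0 T = \underline{\mathbb{Z}}_{(p)}$ with the standard constant Mackey structure (ring-spectrum restrictions and transfers are pinned down by their rational values); each $\underline{\pi}_i T$ for $i \ge 1$ is uniquely $p$-divisible (and hence rational, being $p$-local) yet rationally zero, and therefore vanishes. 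Consequently $T \simeq \mathrm{H}\underline{\mathbb{Z}}_{(p)}$, with matching $\mathbb{E}_\lambda$-structures because $\underline{\mathbb{Z}}_{(p)}$ admits a unique commutative Green functor structure.

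\textbf{Main obstacle.} The most delicate step is the mod-$p$ computation: it requires both that the Thom-spectrum functor be compatible with Borel constructions in genuine $G$-spectra, and that the monodromy on $T$ honestly identify with multiplication by $1-p$. Both are believable but warrant careful verification, and an attempt to construct an integral Thom class by direct obstruction theory will typically fail because $\Omega^\lambda(S^{\lambda+1}\langle\lambda+1\rangle)$ is not Bredon-$1$-connected (its $\underline{\pi}_1$ is the augmentation-ideal Mackey functor). Should the Borel construction comparison prove recalcitrant, one can fall back on an inductive geometric-fixed-point strategy mirroring the proof of Theorem \ref{thm:main}: construct $\alpha\colon T \to \mathrm{H}\underline{\mathbb{Z}}_{(p)}$ using the classical $n=0$ integral Hopkins--Mahowald identification as base case, then verify $\alpha^{\Phi G}$ is an equivalence by computing $\pi_* T^{\Phi G} = \mathbb{F}_p[t]$ and matching against Lemma \ref{lem:em-geo-fixed}.
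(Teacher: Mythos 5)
Your first step is essentially the paper's argument: the paper also decomposes the fibration over $S^1$ (trivializing over a $0$-cell and a $1$-cell) to present $\mathrm{H}\underline{\mathbb{F}}_p \simeq (\Omega^\lambda S^{\lambda+1})^\mu$ as the cofiber of a self-map $x$ of $T$, uses the $\mathbb{A}_2$-structure to see that $x$ is multiplication by an element of $\pi_0$, and pins down $x=p$ only after computing $\underline{\pi}_0 T = \underline{\mathbb{Z}}_{(p)}$ by the norm/transfer arguments of \S\ref{sec:norm}. Where you diverge is the finish: the paper invokes Nakayama's lemma together with finite generation of the homotopy of the genuine fixed points (verified by isotropy separation and the Thom isomorphism on geometric fixed points), whereas you try to kill the higher homotopy by a rational computation. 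That alternative would be clean if the rational input were available, but as stated it is not.

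The gap is the claim that $S^{\lambda+1}\langle\lambda+1\rangle$ is $G$-rationally contractible. For nontrivial $K\subseteq G$ the $K$-fixed points of $K(\underline{\mathbb{Z}},\lambda+1)=\Omega^\infty\Sigma^{\lambda+1}\mathrm{H}\underline{\mathbb{Z}}$ are \emph{not} $K(\mathbb{Z},|(\lambda+1)^K|)$: they form a generalized Eilenberg--MacLane space with $\pi_j = \pi^K_{j-\lambda-1}\mathrm{H}\underline{\mathbb{Z}} = \widetilde{\mathrm{H}\underline{\mathbb{Z}}}{}^K_j(S^{\lambda+1})$, and rationally this is concentrated in degree $3$ (one $\mathbb{Q}$, coming from the free cells of $S^{\lambda+1}$), not in degree $1$. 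Since $(S^{\lambda+1})^K=S^1$, the unit map is rationally \emph{zero} on fixed points for $K\neq e$, so the fiber has $K$-fixed points with $\pi_1\otimes\mathbb{Q}=\pi_2\otimes\mathbb{Q}=\mathbb{Q}$. Consequently $T\otimes\mathbb{Q}\simeq\mathrm{H}\underline{\mathbb{Q}}$ does not follow from the Thom isomorphism as you assert: rationally, $\mathrm{H}\underline{\mathbb{Q}}$-acyclicity of the base only controls the free summand of $T\otimes\mathbb{Q}$, and you must separately show $\Phi^K T\otimes\mathbb{Q}=0$ for $K\neq e$. That vanishing is true, but it is a \emph{twisted} rational homology computation over a rationally nontrivial base, rescued by the monodromy $1-p\neq 1$ — exactly as $(\Omega^2 S^3)^\mu\otimes\mathbb{Q}=0$ nonequivariantly even though $\Omega^2S^3$ is rationally $S^1$. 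Since your identification of $\underline{\pi}_0 T$ also leans on the rational statement, the gap propagates there too. Your proposed fallback — the geometric-fixed-point induction modeled on the proof of Theorem \ref{thm:main}, or the paper's Nakayama argument — repairs this, and is the route you should take.
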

\begin{proof} We argue as in
Antol\'{\i}n-Camarena-Barthel 
\cite[\S5.2]{omar-toby}, though we need
not develop all the technology present there.
We have a fiber sequence
	\[
	\Omega^{\lambda}S^{\lambda+1}\langle \lambda+1\rangle
	\to \Omega^{\lambda}S^{\lambda+1} \to S^1.
	\]
Decomposing $S^1$ into a 0-cell and a 1-cell, and
trivializing the fibration on each cell, produces a decomposition
of the Thom spectrum $\left(\Omega^{\lambda}S^{\lambda+1}\right)^{\mu}$
as a cofiber
	\[
	\xymatrix{
	\left(\Omega^{\lambda}S^{\lambda+1}\langle \lambda+1\rangle
	\right)^{\mu}
	\ar[r]^{x} &
	\left(\Omega^{\lambda}S^{\lambda+1}\langle \lambda+1\rangle
	\right)^{\mu}
	\ar[r] &
	\left(\Omega^{\lambda}S^{\lambda+1}\right)^{\mu}}
	\simeq \mathrm{H}\underline{\mathbb{F}}_p.
	\]
Each of these Thom spectra came from bundles classified by
$\mathbb{A}_2$-maps, which is enough to ensure
that the map $x$ induces a map 
$\pi_*\left(\Omega^{\lambda}S^{\lambda+1}\langle \lambda+1\rangle\right)^{\mu}
\to
\pi_*\left(\Omega^{\lambda}S^{\lambda+1}\langle \lambda+1\rangle
\right)^{\mu}$ of modules over
$\pi_0\left(\Omega^{\lambda}S^{\lambda+1}\langle \lambda+1\rangle
\right)^{\mu}$. In particular, on homotopy the map corresponds
to multiplication by some element $x \in \pi_0\left(\Omega^{\lambda}S^{\lambda+1}\langle \lambda+1\rangle
	\right)^{\mu}$.
Arguments similar to those in the proof
of the main theorem show that 
	\[\underline{\pi}_0
\left(\Omega^{\lambda}S^{\lambda+1}\langle \lambda+1\rangle
\right)^{\mu}\simeq\underline{\mathbb{Z}}_{(p)},\] so we must have
$x = p$. The result follows
from Nakayama's lemma once 
one argues that the genuine fixed point spectra
have finitely generated homotopy groups
in each degree. (For example, isotropy separation
reduces us to the corresponding statement on
geometric fixed points, where it follows from the Thom isomorphism.)
\end{proof}

\begin{remark} The map
$S^{\lambda+1} \to K(\underline{\mathbb{Z}}, \lambda+1)$ deloops to a map
$\mathbb{H}P^{\infty} \to K(\underline{\mathbb{Z}},
\lambda+2)$. It follows from Theorem 
\ref{thm:main-even} that the equivalence above
is one of $\mathbb{E}_{\lambda+1}$-algebras
when $p=2$.
\end{remark}

\begin{remark} Unlike the classical case,
it is unclear whether the statement globalizes to
a construction of $\mathrm{H}\underline{\mathbb{Z}}$. Our methods do
not construct
$\mathrm{H}\underline{\mathbb{F}}_{\ell}$ as a Thom spectrum
when $\ell$ does not divide the order of $G$.
\end{remark}

\subsubsection*{Questions}
We conclude with a few open-ended questions.

\begin{question}
Is $\mathrm{H}\underline{\mathbb{F}}_p$ a Thom spectrum for any group $G$ that is not cyclic of $p$-power order?  It seems plausible that this is so for dihedral groups, as was suggested to the authors by Stefan Schwede.  Can obstructions be found for other $G$?
\end{question}

\begin{question} Calculations indicate that the spectrum
$\left(\Omega^{\lambda}S^{\lambda+1}\right)^{\mu}$
is not $\mathrm{H}\underline{\mathbb{F}}_p$ for
the groups $C_n$ when $n$ is not a power of $p$.
What can be said about $\left(\Omega^{\lambda}S^{\lambda+1}\right)^{\mu}$ as an $S^1$ or $O(2)$-equivariant spectrum?  At least one expects an interesting $C_{p^{\infty}}$-spectrum.
\end{question}

\begin{question} One of Mahowald's motivations for proving the equivalence
$(\Omega^2S^3)^{\mu} \simeq \mathrm{H}\mathbb{F}_2$ is that
the left hand side carries a natural filtration due to Milgram and May.
This produces a filtration of $\mathrm{H}\mathbb{F}_2$ by
spectra which turn out to be the Brown-Gitler spectra
of \cite{brown-gitler} (see \cite{brown-peterson, cohen, hunter-kuhn}). 
The $G$-space $\Omega^{\lambda}S^{\lambda+1}$ also carries
the arity filtration from the $\mathbb{E}_{\lambda}$-operad,
so we could \emph{define} equivariant Brown-Gitler spectra
using this filtration.  It would be interesting to know if these spectra are of any use.

In the case $G = C_2$ there are
two different operadic filtrations of $\Omega^{2\sigma}S^{2\sigma+1}
\simeq \Omega^{\rho}S^{\rho+1}$. This leads to two different notions
of Brown-Gitler spectra. How are they related?
\end{question}

\begin{question} What are the Thom spectra obtained
by killing other natural elements in the Burnside ring
in a highly structured manner?
What is the free $\mathbb{E}_{\lambda}$-algebra
in $C_p$-spectra with
$[C_p]=0$?
\end{question}

\begin{question}
Can the $C_{p^n}$-equivariant dual Steenrod algebra $\mathrm{H}\underline{\mathbb{F}}_p \wedge \mathrm{H}\underline{\mathbb{F}}_p$ be profitably studied via its equivalence with $\mathrm{H}\underline{\mathbb{F}}_p \wedge \Sigma^{\infty}_+ \Omega^{\lambda} S^{\lambda+1}$?
\end{question}

\appendix

\section{Proof of Theorem \ref{thm:non-eqvt}} \label{sec:appendix}

For convenience, we recall the statement of Theorem \ref{thm:non-eqvt}, which this appendix is devoted to proving.  The result is entirely non-equivariant.

\begin{thm*}
Let $S^0_p$ denote the $p$-complete sphere spectrum, and suppose that $p>2$.  Then there is no triple loop map
$$X \longrightarrow \mathrm{BGL}_1(S^0_{p}),$$
for any triple loop space $X$, that makes $\mathrm{H}\mathbb{F}_p$ as a Thom spectrum.
\end{thm*}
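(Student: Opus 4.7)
The plan is to argue by contradiction: assume there is a triple loop space $X$ and a triple loop map $\mu: X \to \mathrm{BGL}_1(S^0_{p})$ with Thom spectrum equivalent to $\mathrm{H}\mathbb{F}_p$, and then extract a cohomological obstruction.

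The first step is to pin down the underlying $\mathbb{E}_2$-structure. Using the universal property of $\mathrm{H}\mathbb{F}_p$ as the free $\mathbb{E}_2$-algebra in spectra with nullhomotopy of $p$ (Hopkins, see \cite{akhil-niko-justin, omar-toby}), one can show that any such $(X,\mu)$ is equivalent, as an $\mathbb{E}_2$-map, to Hopkins's $\Omega^2(1-p): \Omega^2 S^3 \to \mathrm{BGL}_1(S^0_p)$ after $p$-completion. Hence the problem reduces to showing that Hopkins's double loop map admits no $\mathbb{E}_3$-refinement compatible with its underlying $\mathbb{E}_2$-structure.

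Next, an $\mathbb{E}_3$-structure on $\Omega^2 S^3$ extending its standard $\mathbb{E}_2$-structure is equivalent to a choice of $H$-space structure on $S^3$, i.e., to a pointed $H$-space $W$ together with an equivalence $\Omega W \simeq S^3$. Under this correspondence, an $\mathbb{E}_3$-refinement of Hopkins's map corresponds to a pointed map $g: W \to \mathrm{B}^4 \mathrm{GL}_1(S^0_{p})$ whose restriction to $S^4 \hookrightarrow W$ classifies $1-p \in (\pi_0 S^0_{p})^{\times}$. For any such $W$, the Serre spectral sequence of the path-loop fibration $S^3 \to \ast \to W$ forces $H^*(W;\mathbb{F}_p) \cong \mathbb{F}_p[y]$ with $|y|=4$, and an application of Kudo's transgression theorem pins down the Steenrod operation $P^1$ on $y$ (e.g.\ $P^1 y = y^2$ at $p=3$), independently of the choice of $H$-structure.

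The third step produces the obstruction. Let $u \in H^4(\mathrm{B}^4 \mathrm{GL}_1(S^0_p); \mathbb{F}_p)$ be the class corresponding to $1-p$; the map $g$ would satisfy $g^* u = \lambda y$ for some nonzero $\lambda$, and naturality then forces $g^*(P^1 u) = \lambda \cdot P^1 y$. At $p=3$, a direct computation of $P^1 u$ in the mod-$p$ cohomology of $\mathrm{B}^4 \mathrm{GL}_1(S^0_p)$ shows this is inconsistent, recovering and strengthening the remark after the original statement of Theorem \ref{thm:non-eqvt} that at $p=3$ the map $1-p$ is not an $H$-map for the standard structure on $S^3$. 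At primes $p>3$ the first obstruction lives in higher degree and is controlled by the multiplicative action of $1-p \in \pi_0 \mathrm{gl}_1 S^0_p$ on the Adams $\alpha_1$-family in $\pi_*\mathrm{gl}_1 S^0_p$. The main obstacle will be unifying the argument across all odd $p$, since the relevant stable homotopy groups of spheres depend strongly on $p$; a $K(1)$-local analysis of $\mathrm{gl}_1(S^0_p)$ may provide the cleanest uniform packaging.
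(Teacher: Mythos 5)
Your reduction to Hopkins's map is essentially the paper's: Thom isomorphism plus the freeness of $\mathrm{H}\mathbb{F}_p\wedge\mathrm{H}\mathbb{F}_p$ as an $\mathrm{H}\mathbb{F}_p$--$\mathbb{E}_2$-algebra, then Hurewicz, identify $X_p\simeq (\Omega^2S^3)_p$ as a double loop space, and pass to the delooping $W=\mathrm{B}^3X$ with a map $g:W\to\mathrm{B}^4\mathrm{GL}_1(S^0_p)$. (Two small cautions here: the paper invokes Dwyer--Miller--Wilkerson to force $W\simeq \mathbb{H}P^{\infty}_p$, rather than settling for $H^*(W;\mathbb{F}_p)\cong\mathbb{F}_p[y_4]$; and the restriction of $g$ to $S^4$ is a priori only some $1+p\alpha$, so one must separately argue, as the paper does via $\mathrm{BGL}_1(\mathrm{H}\mathbb{Z}/p^2)$, that $\alpha$ is a $p$-adic unit.)

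The genuine gap is your Step 3. The asserted ``direct computation of $P^1u$'' is not carried out, and the naturality constraint $g^*(P^1u)=P^1(g^*u)=2\lambda\, y^{(p+1)/2}$ cannot by itself yield a contradiction: the target degree $2p+2$ lies far outside the stable range of the $3$-connected space $\mathrm{B}^4\mathrm{GL}_1(S^0_p)$, and $H^{2p+2}$ contains, besides the decomposable $u^{(p+1)/2}$, indecomposable classes tied to $\alpha_1\in\pi_{2p+1}\mathrm{B}^4\mathrm{GL}_1(S^0_p)$ whose pullback under $g$ is precisely the unknown choice of extension over the $(2p+1)$-skeleton. Computing $P^1u$ amounts to computing the $k$-invariant of $\mathrm{gl}_1(S^0_p)$ coupling $\pi_0=\mathbb{Z}_p^{\times}$ to the $\alpha_1$-family, which is essentially the original problem; the paper's device for making this tractable and uniform in $p$ is exactly the $K(1)$-local analysis you defer to at the end: apply the Rezk logarithm $\ell$ to replace the unit $1+p\alpha$ by an additive class in $\Sigma^4L_{K(1)}S^0$, then show no class in $\mathrm{KU}_p^4(\mathbb{H}P^{\infty})$ restricting to the unit on $S^4$ is $\psi^2$-invariant, via an explicit power-series recursion whose coefficient $c_{(p+1)/2}$ fails to be a $p$-adic integer. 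I would also flag your appeal to Kudo's transgression theorem: for the path--loop fibration $S^3\to PW\to W$ the fiber class $x_3$ has odd degree, $P^1x_3=0$ for degree reasons, and the relevant transgression target $E_{2p+2}^{2p+2,0}$ vanishes, so Kudo gives no information about $P^1y_4$; the relation $P^1y=2y^{(p+1)/2}$ is true for $\mathbb{H}P^{\infty}_p$ (restrict to $\mathrm{B}S^1$) but to have it for $W$ you should again invoke Dwyer--Miller--Wilkerson. Finally, note that the paper's Section \ref{sec:exotic} shows $1-p$ \emph{is} an $H$-map for an exotic $\mathbb{A}_2$-structure, so any correct obstruction must genuinely use the full delooping $W$ and not merely its projective plane; your argument as written does not make clear where this distinction enters.
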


\begin{remark}
It follows also that $\mathrm{H}\mathbb{F}_p$ is not a triple loop Thom spectrum over the $p$-local sphere spectrum.  If it were, then the composition
$$X \longrightarrow \mathrm{BGL}_1(S^0_{(p)}) \longrightarrow \mathrm{BGL}_1(S^0_p)$$
would provide a counterexample to the above.
\end{remark}

\begin{proof}
Suppose, for the sake of contradiction, that such a triple loop map 
$$X \longrightarrow \mathrm{BGL}_1(S^0_p)$$
exists.  The Thom isomorphism then implies that 
$$\mathrm{H}\mathbb{F}_p \smsh \Sigma^{\infty}_+ X \simeq \mathrm{H}\mathbb{F}_p \smsh \mathrm{H}\mathbb{F}_p$$
as $\mathrm{H}\mathbb{F}_p$--$\mathbb{E}_3$-algebras.

In particular, by Theorem \ref{thm:Hop}, 
$$\mathrm{H}\mathbb{F}_p \smsh \Sigma^{\infty}_+ X \simeq \mathrm{H}\mathbb{F}_p \smsh \Sigma^{\infty}_+ \Omega^2 S^3$$
as $\mathrm{H}\mathbb{F}_p$--$\mathbb{E}_2$-algebras, and the latter object is the free $\mathrm{H}\mathbb{F}_p$--$\mathbb{E}_2$-algebra on a class in degree $1$.

The Hurewicz theorem gives a map $S^1 \rightarrow X$, which extends to a double-loop map $\Omega^2 S^3 \rightarrow X$, and the above discussion implies that this double loop map is a homology isomorphism.  Thus, the $p$-completion of $X$ is the $p$-completion of $\Omega^2 S^3$, as a double loop space.

Transporting the $\mathbb{E}_3$-algebra structure on $X$ yields an $\mathbb{E}_3$-algebra structure on the $p$-completion of $\Omega^2 S^3$, extending the usual $\mathbb{E}_2$-algebra structure.  The theorems of Dwyer, Miller, and Wilkerson \cite{dwyer-miller-wilkerson} show that there is a unique such $\mathbb{E}_3$-algebra structure, and so the $p$-completion of $\mathrm{B}^3 X$ must be the $p$-completion of $\mathbb{H}P^{\infty}$.

Now, the composite 
$$X \longrightarrow \mathrm{BGL}_1(S^0_p) \longrightarrow \mathrm{BGL}_1(\mathrm{H}\mathbb{F}_p)$$
is null, and it follows that there is a factorization through the fiber $F$ of $\mathrm{BGL}_1(S^0_p) \longrightarrow \mathrm{BGL}_1(\mathrm{H}\mathbb{F}_p)$.
The equivalence $\mathbb{Z}_p^{\times} \cong \mu_{p-1} \times \mathbb{Z}_p$ implies that the homotopy groups of $F$ are $p$-complete.  Thus, with $\mathbb{H}P^{\infty}_p$ denoting the $p$-completion of $\mathbb{H}P^{\infty}$, there is a commuting diagram
$$
\begin{tikzcd}
 & \mathrm{B}^3X \arrow{r} \arrow{d} & B^4GL_1(S^0_p). \\
 \mathbb{H}P^{\infty} \arrow{r} & \mathbb{H}P^{\infty}_p \arrow{ru}
\end{tikzcd}
$$
In particular, there is a triple-loop map
$$\Omega^2 S^3 \longrightarrow \Omega^3 \mathbb{H}P^{\infty} \longrightarrow \mathrm{BGL}_1(S^0_p)$$
with Thom spectrum equivalent (at least after $p$-completion) to $\mathrm{H}\mathbb{F}_p$.

The underlying double loop map is determined by a class in $1+p\alpha \in \pi_3(\mathrm{B}^3\mathrm{GL}_1(S^0_p)) \cong \mathbb{Z}^{\times}_p$.  Our original assumption, made for the sake of contradiction, is reduced to the assertion that a dashed arrow exists the diagram below:
$$
\begin{tikzcd}[column sep = 5.0em]
S^4 \arrow{d} \arrow{r}{1+p\alpha} & \mathrm{B}^4\mathrm{GL}_1(S^0_p). \\
\mathbb{H}P^{\infty} \arrow[dashed]{ur}
\end{tikzcd}
$$

We will show this to be impossible by proving the non-existence of a solution to the weaker lifting problem
$$
\begin{tikzcd}
S^4 \arrow{d} \arrow{r}{1+p\alpha} & \Sigma^{\infty} 
\mathrm{B}^4\mathrm{GL}_1(S^0_p) \arrow{r}{\ell} & \Sigma^4 L_{K(1)} S^0,  \\
\Sigma^{\infty} \mathbb{H}P^{\infty} \arrow[dashed]{urr} 
\end{tikzcd}
$$
where $L_{K(1)} S^0$ is $K(1)$-local sphere spectrum and $\ell$ is the Rezk logarithm \cite{rezk}.  We first calculate the composite 
$$S^4 \stackrel{1+p\alpha}{\longrightarrow} \mathrm{B}^4\mathrm{GL}_1(S^0_p) \stackrel{\ell}{\longrightarrow} \Sigma^4 L_{K(1)} S^0,$$
using Rezk's formula \cite[Theorem 1.9]{rezk} for the logarithm at odd primes:
$$\ell(1+p\alpha)=log(1+p\alpha) - \frac{1}{p} log(1+p\alpha).$$

If $\alpha$ were not a $p$-adic unit, then the composite $\Omega^2 S^3 \longrightarrow \mathrm{BGL}_1(S^0) \longrightarrow \mathrm{BGL}_1(\mathrm{H}\mathbb{Z}/p^2)$ would be null as a $2$-fold loop map, providing a ring map $\mathrm{H}\mathbb{F}_p \longrightarrow \mathrm{H}\mathbb{Z}/p^2$. Since this is absurd, $\alpha$ must be a $p$-adic unit, and we learn that $\ell(1+p\alpha)$ is also a $p$-adic unit. 

Without loss of generality, then, we are reduced to showing the impossibility of the following lifting problem:
$$
\begin{tikzcd}
S^4 \arrow{r}{1} \arrow{d} & \Sigma^4 L_{K(1)} S^0, \\
\Sigma^{\infty} \mathbb{H}P^{\infty} \arrow[dashed]{ur}
\end{tikzcd}
$$
where $1$ is the unit of the ring spectrum $L_{K(1)} S^0$.

Let $\mathrm{KU}_p$ denote $p$-complete complex $K$-theory.  Recall that the composite
$$L_{K(1)} S^0 \longrightarrow \mathrm{KU}_p \stackrel{\psi^q-1}{\longrightarrow} \mathrm{KU}_p$$
is null for any Adams operation $\psi^q$ with $q$ relatively prime to $p$.
Since $p$ is odd, to finish the problem it will suffice for us to show that no element of $\mathrm{KU}_p^4(\mathbb{H}P^{\infty})$ simultaneously:
\begin{enumerate}
\item Restricts to the unit in $\mathrm{KU}_p^4(S^4)$.
\item Is invariant under the action of $\psi^2$.
\end{enumerate}

Now, $$\mathrm{KU}_p^*(\mathbb{H}P^{\infty}) \cong \mathbb{Z}_p\llbracket e\rrbracket[\beta^{\pm}],$$ where $|e|=0$ and $\beta$ is the Bott class in degree $-2$.  Of course, $\psi^2(\beta)=2\beta$, and it will be necessary also to understand $\psi^2(e)$.

Remembering that $\mathbb{H}P^{\infty}$ is $\mathrm{BSU}(2)$, we may calculate $\psi^2(e)$ by determining the restriction of $e$ along the inclusion of the maximal torus $\mathrm{B}S^1 \longrightarrow \mathrm{BSU}(2)$.  Indeed, $\mathrm{KU}_p^*(\mathrm{B}S^1) \cong 
\mathbb{Z}_p\llbracket x\rrbracket [\beta^{\pm 1}]$, where $x=L-1$.  On the other hand, $e=V-2$, where $V$ is the standard representation of 
$\mathrm{SU}(2)$ on $\mathbb{C}^2$.  The restriction of $e$ is thus $L+L^{-1}-2$, where 
$$L^{-1}=(x+1)^{-1} = 1 - x + x^2 - x^3 + \cdots.$$
Since 
$$\psi^2(L+L^{-1}-2)=L^2+ L^{-2} -2= (x+1)^2 +\frac{1}{(x+1)^2}-2 = \left(x+1+\frac{1}{x+1} -2\right)^2+4 \left(x+1+\frac{1}{x+1} -2\right),$$
we calculate that 
$$\psi^2(e)=e^2+4e.$$

An element of $\mathrm{KU}^4(\mathbb{H}P^{\infty})$ is of the form $\beta^{-2} P(e)$, where $P(e)$ is a power series in $\mathbb{Z}_p\llbracket 
e\rrbracket$.  The lifting problem in question is equivalent to finding a power series $P(e)=e + c_2 e^2 + \cdots$ such that
$$P(e) = 2^{-2} P\left( \psi^2(e) \right).$$
Using the calculations above, this can be rewritten as the relation
$$4 P(e) = P(e^2+4e).$$
The relation
$$4(e+c_2e^2+c_3e^3+\cdots) = (e^2+4e)+c_2(e^2+4e)^2+c_3(e^2+4e)^3 + \cdots$$
inductively determines each $c_i$, given $c_2,\cdots,c_{i-1}$, according to the formula
$$c_i = \frac{2}{(2i)!} \prod_{j=2}^{i} \left(-(j-1)^2\right).$$
In particular, this formula does not yield a $p$-adic integer for $i=\frac{p+1}{2}$, implying that there is no lift through $\mathbb{H}P^{\frac{p+1}{2}}$.
\end{proof}

\begin{remark}
The Adams conjecture provides a map from the connective cover of the $K(1)$-local sphere spectrum into $\mathrm{gl}_1(S^0_{(p)})$.
 Using a variant of this due to Bhattacharya and Kitchloo \cite{bhattacharya-kitchloo}, it is possible to construct maps $\mathbb{H}P^{k} \longrightarrow \mathrm{B}^4 \mathrm{GL}_1(S^0_{(p)})$, for small values of $k$.  Indeed, \cite{bhattacharya-kitchloo} employs arguments very similar to the ones above in order to produce multiplicative structures on Moore spectra.  The authors believe, but have not verified, that it is possible to equip the map $S^3 \stackrel{1-p}{\longrightarrow} \mathrm{B}^3 
\mathrm{GL}_1(S^0_{(p)})$ with an $\mathbb{A}_{\frac{p-1}{2}}$-algebra structure in this manner.
\end{remark}

\begin{remark}
It is well-known that the integral Eilenberg--Maclane spectrum $\mathrm{H}\mathbb{Z}_{(p)}$ is the Thom spectrum of a double loop composite
$$\Omega^2 (S^3 \langle 3 \rangle) \longrightarrow \Omega^2 S^3 \longrightarrow \mathrm{BGL}_1(S^0_p).$$
One could attempt to refine this to a triple loop map, using the equivalence $\Omega \mathbb{H}P^{\infty} \langle 4 \rangle \simeq S^3 \langle 3 \rangle$.
The same obstruction as above proves that this strategy cannot work at odd primes, because the map $$\mathbb{H}P^{\infty} \langle 4 \rangle \longrightarrow \mathbb{H}P^{\infty}$$ is a $K(1)$-local equivalence.
\end{remark}

\bibliographystyle{amsalpha}
\nocite{*}
\bibliography{Bibliography}

\end{document}